\documentclass{amsart}

\usepackage[letterpaper, margin=1.4in]{geometry}

\usepackage[backend=biber,style=alphabetic,sorting=nyt,minalphanames=10,maxalphanames=10,maxnames=15, doi=false,isbn=false,url=false,giveninits=true]{biblatex}
\renewbibmacro{in:}{} 
\usepackage{amssymb}
\usepackage{amssymb, latexsym, MnSymbol}
\usepackage{color, xcolor}
\usepackage{amscd, graphicx, psfrag, tikz}
\usetikzlibrary{calc,patterns}
\usepackage[mathscr]{euscript}
\usepackage[all]{xy}
\usepackage{mathtools}

\usepackage{enumitem}
\usepackage{stmaryrd}
\usepackage{url,hyperref}

\hypersetup{
    hidelinks,
    colorlinks=false,
    linkcolor=black,
    citecolor=black,
    urlcolor=black
}

\usepackage{comment}

\newcommand{\Ga}{\Gamma}
\newcommand{\Si}{ {\Sigma} }

\newcommand{\si}{ {\sigma} }

\newcommand{\bC}{ {\mathbb{C}} }

\newcommand{\bK}{\mathbb{K}}
\newcommand{\bL}{\mathbb{L}}

\newcommand{\bP}{\mathbb{P}}
\newcommand{\bQ}{\mathbb{Q}}
\newcommand{\bR}{\mathbb{R}}

\newcommand{\bT}{\mathbb{T}}

\newcommand{\bZ}{\mathbb{Z}}

\newcommand{\cA}{\mathcal{A}}
\newcommand{\cB}{\mathcal{B}}

\newcommand{\cE}{\mathcal{E}}

\newcommand{\cI}{\mathcal{I}}

\newcommand{\cL}{\mathcal{L}}
\newcommand{\cM}{\mathcal{M}}
\newcommand{\cO}{\mathcal{O}}

\newcommand{\cS}{\mathcal{S}}
\newcommand{\cT}{\mathcal{T}}
\newcommand{\cX}{\mathcal{X}}
\newcommand{\cV}{\mathcal{V}}

\newcommand{\age}{\mathrm{age}}
\newcommand{\Aut}{\mathrm{Aut}}
\newcommand{\CR}{ {\mathrm{CR}} }

\newcommand{\Hom}{\mathrm{Hom}}
\newcommand{\End}{\mathrm{End}}

\newcommand{\Ker}{\mathrm{Ker}}
\newcommand{\Res}{\mathrm{Res}}
\newcommand{\Spec}{\mathrm{Spec}}

\renewcommand{\Re}{\mathrm{Re}}

\newcommand{\can}{ {\mathrm{can}} }
\newcommand{\eff}{ {\mathrm{eff}} }
\newcommand{\ext}{ {\mathrm{ext}} }
\newcommand{\ev}{\mathrm{ev}}
\newcommand{\val}{ {\mathrm{val}} }
\newcommand{\vir}{ {\mathrm{vir}} }

\newcommand{\Int}{\mathrm{Int}}
\newcommand{\pt}{\mathrm{pt}}

\newcommand{\Nef}{{\mathrm{Nef}}}
\newcommand{\NE}{{\mathrm{NE}}}

\renewcommand{\Box}{\mathrm{Box}}

\newcommand{\Hol}{{\mathrm{Hol}}}
\newcommand{\sgn}{ {\mathrm{sgn}}}

\newcommand{\bff}{\mathbf{f}}

\newcommand{\bu}{\mathbf{u}}

\newcommand{\one}{\mathbf{1}}
\newcommand{\bmu}{\boldsymbol{\mu}}
\newcommand{\btau}{\boldsymbol{\tau}}
\newcommand{\bsi}{{\boldsymbol{\si}}}
\newcommand{\brho}{{\boldsymbol{\rho}}}

\newcommand{\bfeta}{{\boldsymbol{\eta}}}
\newcommand{\bSi}{\mathbf{\Si}}

\newcommand{\fg}{\mathfrak{g}}
\newcommand{\fl}{\mathfrak{l}}
\newcommand{\fm}{\mathfrak{m}}

\newcommand{\fn}{\mathfrak{n}}
\newcommand{\fp}{\mathfrak{p}}
\newcommand{\fr}{\mathfrak{r}}
\newcommand{\fs}{\mathfrak{s}}

\newcommand{\w}{\mathsf{w}}
\newcommand{\su}{\mathsf{u}}
\newcommand{\sv}{\mathsf{v}}
\newcommand{\sw}{\mathsf{w}}

\newcommand{\sfa}{\mathsf{a}}
\newcommand{\sfb}{\mathsf{b}}
\newcommand{\sff}{\mathsf{f}}
\newcommand{\sP}{\mathsf{P}}
\newcommand{\Crit}{\mathsf{Crit}}

\newcommand{\hX}{\hat{X}}

\newcommand{\hH}{\hat{H}}

\newcommand{\hx}{\hat{x}}
\newcommand{\hy}{\hat{y}}
\newcommand{\hxi}{\hat{\xi}}
\newcommand{\htheta}{\hat{\theta}}

\newcommand{\txi}{\widetilde{\xi}}

\newcommand{\tbeta}{\widetilde{\beta}}

\newcommand{\tL}{\widetilde{L}}
\newcommand{\tM}{\widetilde{M}}
\newcommand{\tN}{\widetilde{N}}
\newcommand{\tQ}{\widetilde{Q}}

\newcommand{\tS}{\widetilde{S}}

\newcommand{\tX}{\widetilde{X}}

\newcommand{\tb}{\widetilde{b}}

\newcommand{\tmu}{\widetilde{\mu}}

\newcommand{\tbT}{\widetilde{\bT}}

\newcommand{\tNef}{\widetilde{\Nef}}
\newcommand{\tNE}{\widetilde{\NE}}

\newcommand{\vGa}{\vec{\Gamma}}
\newcommand{\bGa}{\mathbf{\Gamma}}
\newcommand{\Mbar}{\overline{\cM}}

\newcommand{\spa}{ {\ \ \,} }

\newcommand{\Cbar}{\bar{C}}
\newcommand{\Dbar}{\bar{D}}
\newcommand{\Hbar}{\bar{H}}

\newcommand{\Zbar}{\bar{Z}}

\newcommand{\lambdabar}{\bar{\lambda}}
\newcommand{\etabar}{\bar{\eta}}

\newcommand{\eps}{\varepsilon}

\newcommand{\ST}{ {S_{\bT'}} }
\newcommand{\RT}{ {R_{\bT'}} }
\newcommand{\bST}{ {\bar{S}_{\bT'}} }
\newcommand{\bSTQ}{ \bST[\![ \tQ,\tau'' ]\!] }

\newcommand{\bSt}{ {\bar{S}_{\bT'}} }

\newcommand{\nov}{\Lambda_{\mathrm{nov}}}
\newcommand{\novT}{\bar{\Lambda}^{\bT'}_{\mathrm{nov}} }

\newcommand{\XX}{X_{\bff}}
\newcommand{\YY}{Y_{\bff}}

\newcommand{\chF}{\check{F}}
\newcommand{\chR}{\check{R}}

\newcommand{\chN}{\check{N}}

\newcommand{\inner}[1]{\langle  #1 \rangle}
\newcommand{\ceil}[1]{\lceil  #1 \rceil}
\newcommand{\floor}[1]{\lfloor  #1 \rfloor}

\newcommand{\double}[1]{\left\llangle #1 \right\rrangle}

\newtheorem{dummy}{dummy}[section]
\newtheorem{lemma}[dummy]{Lemma}
\newtheorem{theorem}[dummy]{Theorem}

\newtheorem{proposition}[dummy]{Proposition}
\newtheorem{remark}[dummy]{Remark}
\newtheorem{definition}[dummy]{Definition}
\newtheorem{example}[dummy]{Example}

\newtheorem{convention}[dummy]{Convention}

\newtheorem{assumption}[dummy]{Assumption}

\begin{document}

\title[All-genus open mirror symmetry for toric CY 3-orbifolds with multiple branes]{All-genus open mirror symmetry for toric Calabi-Yau 3-orbifolds with multiple branes}

\author{Bohan Fang}
\address{Bohan Fang, Beijing International Center for Mathematical Research, Peking University, 5 Yiheyuan Road, Beijing 100871, China}
\email{bohanfang@gmail.com}

\author{Chiu-Chu Melissa Liu}
\address{Chiu-Chu Melissa Liu, Department of Mathematics, Columbia University, 2990 Broadway, New York, NY 10027}
\email{ccliu@math.columbia.edu}

\author{Song Yu}
\address{Song Yu, Department of Mathematics, California Institute of Technology, 1200 E California Blvd, Pasadena, CA 91125}
\email{songyu@caltech.edu}

\author{Zhengyu Zong}
\address{Zhengyu Zong, Department of Mathematical Sciences, Tsinghua University, Haidian District, Beijing 100084, China}
\email{zyzong@mail.tsinghua.edu.cn}

\begin{abstract}
We prove the all-genus open mirror symmetry of a toric Calabi-Yau 3-orbifold where the boundary condition is given by multiple Aganagic-Vafa branes. Each brane could be either inner or outer, and admit a fractional framing. The B-model is given by the Chekhov-Eynard-Orantin topological recursion expanded in local coordinates specified by the brane data.

\end{abstract}
\maketitle

\setcounter{tocdepth}{1}

\tableofcontents


\section{Introduction}\label{sect:Intro}

\subsection{Background and motivation}

Let $\cX$ be a Calabi-Yau 3-fold. The \emph{Gromov-Witten theory} of $\cX$ concerns the enumeration of maps of curves to $\cX$. It is established as a mathematical theory of (A-model) topological strings in physics and carries many delicate structures. The computation of the Gromov-Witten invariants, on the other hand, is often challenging, especially in higher genera. When $\cX$ is \emph{toric}, the technique of virtual localization \cite{GP99} reduces the computation to that of Hodge integrals. When $\cX$ is additionally \emph{smooth}, the \emph{topological vertex} \cite{AKMV05,LLLZ09} provides an efficient algorithm for the integrals and is equivalent to the Gromov-Witten/Donaldson-Thomas correspondence \cite{MNOP06,MNOP06b,MOOP11}. It gives a combinatorial formula for a generating function of all-genus Gromov-Witten invariants. The algorithm has been generalized to the case of transverse $A_n$-singularities \cite{BCY12,RZ13,RZ15,Ross14,Zong15} but not yet to general toric Calabi-Yau 3-orbifolds.

The Gromov-Witten vertex may be interpreted as a generating function of \emph{open} invariants with boundary conditions specified by a type of Lagrangian submanifolds/orbifolds called \emph{Aganagic-Vafa branes} \cite{AV00}. Open Gromov-Witten theory concerns the enumeration of maps of bordered Riemann surfaces and provides a mathematical theory of topological open strings.

The Gromov-Witten theory of $\cX$ may be studied, via mirror symmetry, by the complex geometry of its B-model mirror. When $\cX$ is a semi-projective toric Calabi-Yau 3-orbifold, the mirror can be taken to be a family of affine curves $C_q$ known as the \emph{mirror curve} \cite{HV00}. The genus-zero Gromov-Witten potential $F_0$ of $\cX$ may be recovered from integrals of 1-forms on $C_q$ along loops, while the disk potential $F_{0,1}$ of $\cX$ relative to an Aganagic-Vafa brane may be recovered from the Abel-Jacobi map of $C_q$ \cite{AV00,AKV02,FL13,FLT22}. Based on the work of Eynard-Orantin \cite{EO07} and Mari\~{n}o \cite{Marino08}, Bouchard-Klemm-Mari\~{n}o-Pasquetti \cite{BKMP09, BKMP10} proposed a new formalism of the higher-genus B-model in terms of the Chekhov-Eynard-Orantin \emph{topological recursion} invariants $\omega_{g,n}$ on the mirror curve and conjectured an all-genus open-closed mirror correspondence between $\omega_{g,n}$ and the Gromov-Witten potentials of $\cX$, known as the \emph{Remodeling Conjecture}. More precisely, in the open string sector, the generating function $F_{g,n}$ of open invariants of $\cX$ accounting for genus-$g$ domains with $n$ boundary components may be recovered from $\omega_{g,n}$. In the closed string sector, the genus-$g$ potential $F_g$ of $\cX$ may be recovered by the free energy $\check{F}_g= \omega_{g,0}$. The above all-genus open-closed mirror symmetry for $\cX = \bC^3$ was proved by \cite{BCMS13,Zhu15,Chen18,Zhou09a,Zhou09b}. Eynard-Orantin \cite{EO15} provided a proof for smooth toric Calabi-Yau 3-folds.  Fang-Liu-Zong \cite{flz2020affine,flz2020remodeling} proved the all-genus open-closed mirror symmetry for toric Calabi-Yau 3-orbifolds where in the open sector, the boundary condition is given by a single Aganagic-Vafa outer brane. In \cite{FLYZ25}, the authors formulated and proved a version of all-genus mirror symmetry for the equivariant descendant Gromov-Witten theory of toric Calabi-Yau 3-orbifolds that respects the integral structures.

In this paper, we prove the general case of the all-genus open mirror symmetry for toric Calabi-Yau 3-orbifolds where the boundary condition can be given by any collection of multiple Aganagic-Vafa branes, each of which can be outer or inner and can admit a fractional framing. We now state and discuss our main result in detail.

\subsection{Main result}

Let $\cX$ be a semi-projective toric Calabi-Yau 3-orbifold defined by a 3-dimensional simplicial fan $\Sigma$. An \emph{open phase} in $\cX$ is a pair $\bfeta = (\cL, \nabla)$ of an Aganagic-Vafa brane $\cL$ together with a flat $U(1)$-connection $\nabla$ on it (see Section \ref{sect:AVBranes}). The brane $\cL$ intersects a unique leg (or 1-dimensional torus orbit) $\fl_\tau$ in $\cX$, indexed by a 2-cone $\tau$ in $\Sigma$, and we say that $\cL$ (or $\bfeta$) is \emph{outer} if $\fl_\tau$ is noncompact and \emph{inner} otherwise. We have $\pi_1(\cL) \cong \bZ \times G_\tau$ where $G_\tau$ is the generic stabilizer group of $\fl_\tau$ and is finite cyclic. We say that $\cL$ is \emph{effective} if $|G_\tau| = 1$ and \emph{ineffective} otherwise (in which case $\fl_\tau$ is referred to as a \emph{gerby} leg). The connection $\nabla$ determines an element $\eta \in G_\tau^*$ and we represent $\bfeta$ by the combinatorial data $(\tau, \eta)$. The set of all open phases will be denoted by $J_\Sigma$.

For $g \in \bZ_{\ge 0}$, $n \in \bZ_{\ge 1}$, and $\bfeta_1, \dots, \bfeta_n \in J_\Sigma$, consider the \emph{A-model open potential}
$$
    F_{g,n}(\btau; \tX_1, \dots, \tX_n) 
$$
which is a (scalar-valued) generating function of open Gromov-Witten invariants accounting for stable maps from bordered genus-$g$ Riemann surfaces with $n$ boundary components to $\cX$ where the $i$-th boundary component is constrained by the open phase $\bfeta_i$ (see Section \ref{sect:AmodelOpen}). It depends on the (extended) K\"ahler parameters $\btau$ of $\cX$ and A-model open moduli parameters $\tX_i$. It further depends on a choice of \emph{framing} which is a 1-dimensional subtorus of the Calabi-Yau torus of $\cX$ (see Section \ref{sect:Framing}). The open Gromov-Witten invariants are defined by virtual localization and restriction of the equivariant parameters to the framing torus.

The mirror curve $C_q$ of $\cX$ is a family of affine curves in $(\bC^*)^2$ parameterized by the B-model moduli parameters $q$, whose defining equation is specified by the fan $\Sigma$ (see Section \ref{sect:MirrorCurve}). An open phase $\bfeta \in J_\Sigma$ determines a connected region $U_{\bfeta} \subset C_q$ which is a small punctured disk in the outer case and a small annulus in the inner case.

Let $\omega_{g,n}$, $g \in \bZ_{\ge 0}$, $n \in \bZ_{\ge 1}$, be the Chekhov-Eynard-Orantin topological recursion invariant on $C_q$ defined by the input of two functions $\hx, \hy$ and the fundamental bidifferential of the second kind (see Section \ref{sect:TR}), which is a symmetric, meromorphic differential on $C_q^n$. Here, the function $\hx$ depends on the choice of framing above. For $\bfeta_1, \dots, \bfeta_n \in J_\Sigma$, the \emph{B-model open potential}
$$
    \chF_{g,n}(q; \hX_1, \dots, \hX_n) 
$$
is defined by the local expansion of $\omega_{g,n}$ on the local region $U_{\bfeta_1} \times \cdots \times U_{\bfeta_n} \subset C_q^n$ (see Section \ref{sect:BmodelOpen}). It depends on the B-model open moduli parameters $\hX_i$ which are local coordinates on (possibly finite covers of) the $U_{\bfeta_i}$'s respectively.

\begin{theorem}[={\bf Theorem \ref{thm:AllGenusMirror}}]\label{thm:Main}
For any $g \in \bZ_{\ge 0}$, $n \in \bZ_{\ge 1}$, and $\bfeta_1, \dots, \bfeta_n \in J_\Sigma$, we have
$$
    \chF_{g,n}(q; \hX_1, \dots, \hX_n) = (-1)^{g-1+n} F_{g,n}(\btau; \tX_1, \dots, \tX_n) 
$$
under the open-closed mirror map $\btau = \btau(q)$, $\tX_i = \tX_i(q, \hX_i)$ relating the A- and B-model moduli parameters.
\end{theorem}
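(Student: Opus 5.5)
The plan is to reduce the statement to the equivariant all-genus mirror symmetry already established by Fang--Liu--Zong \cite{flz2020remodeling} and refined in \cite{FLYZ25}. That result identifies the Laplace transforms and local expansions of $\omega_{g,n}$ near the ramification points with the graph-sum (Givental-type) formula for the equivariant descendant Gromov--Witten potential of $\cX$. We will go through the following steps.

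\textbf{Step 1 (A-model localization).} Write $F_{g,n}$ via virtual localization on the moduli of open stable maps as a sum over decorated graphs. Each boundary component attached to the brane $\bfeta_i=(\tau_i,\eta_i)$ contributes a disk factor along the leg $\fl_{\tau_i}$, weighted by the character $\eta_i$ of $G_{\tau_i}$. In the inner case, a disk may end at either torus fixed point of $\fl_{\tau_i}$, and each end contributes its own disk factor. The remaining closed part is a descendant equivariant invariant with insertions at torus fixed points. Summing over the disk winding numbers, we package the boundary contributions into an ``open leaf'' $\xi_i(\tX_i, z)$, a $H^*_{\CR}$-valued series in $\psi=z$. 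Then $F_{g,n}$ becomes the descendant potential $\double{\xi_1(-\psi),\dots,\xi_n(-\psi)}_{g,n}$, restricted to the framing torus. This step is a direct generalization of the single-outer-brane computation; the inner brane only adds the second fixed point.

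\textbf{Step 2 (B-model expansion).} Using the graph-sum form of $\omega_{g,n}$ from \cite{flz2020remodeling,FLYZ25}, expand $\omega_{g,n}$ in terms of the basic differentials $\theta^\alpha_k$ attached to the ramification points. The key input is a disk-level identity: the local expansion of each $\theta^\alpha_k$ on $U_{\bfeta}$, with respect to $\hX$, equals (up to sign) the pairing of the $\alpha$-th normalized canonical basis element, with descendant $\psi^k$, against $\xi_{\bfeta}$ under the mirror map. We prove this first for $k=0$. There it follows from the genus-zero open mirror theorem, which identifies $\hy\,d\hx$ integrated over $U_{\bfeta}$ with the disk potential $F_{0,1}$, together with the Laplace-transform identification of the $\theta^\alpha_0$ with the $J$-function/$S$-operator components. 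For $k>0$, we apply the operator $d\big(\frac{1}{d\hx}\,\cdot\big)$, which corresponds to multiplication by powers of $z$ on the A-side.

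\textbf{Step 3 (assembly and the main obstacle).} Substituting the disk identity into both graph sums matches vertices, edges and the unstable $(0,1)$, $(0,2)$ corrections term by term, and the sign $(-1)^{g-1+n}$ is tracked as in \cite{flz2020remodeling}. We expect the main obstacle to be the disk identity of Step 2 for \emph{inner} and \emph{gerby} branes. For an inner brane, $U_{\bfeta}$ is an annulus rather than a punctured disk. The expansion of $\theta^\alpha_k$ then involves analytic continuation between the two ends of $\fl_\tau$, and the fractional framing acts through a finite cover on which $\hX$ is a coordinate. We would first handle the effective outer case, where this is \cite{flz2020remodeling}. Next we treat the gerby case by projecting onto the $\eta$-isotypic component of the character sum over $G_\tau$. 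Finally, we treat the inner case: choose a path in the annulus, expand $\theta^\alpha_k$ as a Laurent series, and check that the positive and negative parts reproduce the two fixed-point contributions of Step 1. Genus-zero inner-brane mirror symmetry (\cite{FLT22}) gives the disk case, and the $z$-derivative argument propagates it to all $k$.
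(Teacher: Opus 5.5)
Your treatment of the stable range $2g-2+n>0$ matches the paper. The A-side becomes a descendant potential with open leaves (Theorem~\ref{thm:AmodelOpenGraph}), and the B-side is expanded by \cite{DOSS14} (Theorem~\ref{thm:BmodelOpenGraph}). Vertex and edge weights are matched by Theorem~\ref{thm:GraphSumId}, and open leaves by a disk-level identity derived from disk mirror symmetry (Lemma~\ref{lem:OpenLeaf}); the paper gets $\theta^0_{\brho}$ from $\tau$-derivatives of $\hy\,d\hx$ rather than a Laplace transform, but that is a detail.

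The gap is the annulus $(g,n)=(0,2)$, which you fold into Step 3 as an ``unstable correction'' matched term by term. Neither graph sum covers $(0,2)$. What your leaf identity actually gives there is only Lemma~\ref{lem:AnnMirrorDer}. That lemma combines the leaf identity with \eqref{eqn:SIdentity} on the A-side, and with the B-side formula expressing the $\hx$-translation derivative of $\omega_{0,2}$ through $\sum_{\brho}\theta^0_{\brho}\otimes\theta^0_{\brho}$. Its conclusion is the identity only after applying $\frac{1}{\sfa_{\bff_1}}\hX_1\partial_{\hX_1}+\frac{1}{\sfa_{\bff_2}}\hX_2\partial_{\hX_2}$. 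That settles two outer phases with $\sgn(\bff_1)=\sgn(\bff_2)$, the setting of \cite{flz2020remodeling}. The difficulty arises when the punctures have opposite signs, or when a brane is inner so that negative powers of $\hX_i$ occur. Then the operator kills every monomial $\hX_1^{\mu_1}\hX_2^{\mu_2}$ with $\mu_1/\sfa_{\bff_1}+\mu_2/\sfa_{\bff_2}=0$, and those coefficients are left undetermined. These are exactly the configurations that are new in the multi-brane, inner, fractionally framed setting.

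You would need the paper's separate argument here, which has three steps.
\begin{enumerate}
\item Special geometry $\partial_{\tau_a}\omega_{0,2}=\int_{B_a}\omega_{0,3}$, together with the stable $(0,3)$ graph-sum identification using a primary leaf, matches the $\tau_a$-derivatives of the two annulus potentials. This reduces the claim to the limit $q\to0$.
\item In that limit, both potentials localize to an affine chart $\cX_\sigma$ in which both phases are outer, so the expansions are power series.
\item The coefficients $N_{\mu_1,\mu_2}$ and $\chN_{\mu_1,\mu_2}$ are polynomial in the framings $\sff_1,\sff_2$ (Lemma~\ref{lem:AnnRational}), hence rational in $\sff$. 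The exceptional condition holds for only finitely many $\sff$, so agreement at generic framing forces agreement at all framings.
\end{enumerate}

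A lesser point concerns your disk input. You cite \cite{FLT22} and \cite{flz2020remodeling}, but both assume integral framing. With several branes, fractional framings cannot be avoided, already for $\bC^3$. The paper therefore re-derives the disk theorem (Theorem~\ref{thm:DiskExpansion}) using a chosen $\sfa_{\bff}$-th root of $Y_{\bff}$ and modified twisting factors, and your argument would need the same.
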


\subsection{Remarks on the statement}\label{RemSec}

We remark on several aspects of generalities of Theorem \ref{thm:Main} as compared to previous works.

\subsubsection{Scalar-valued potentials for ineffective branes}\label{RemSec:ScalarValued}
Fang-Liu-Zong \cite{flz2020remodeling} considered open mirror symmetry where all the boundary components are constrained by the same Aganagic-Vafa outer brane $\cL$, intersecting a leg $\fl_\tau$. When $\cL$ is ineffective, the A- and B-model open potentials are defined as vector-valued generating functions where each component takes value in $H^*_{\CR}(\cB G_\tau;\bC) \cong \bC^{|G_\tau|}$. The A-model potential takes into account all possible monodromies at the boundary, indexed by elements of $G_\tau$, which is consistent with \cite{FLT22}. The B-model potential sums over all the $|G_\tau|$ local regions on the mirror curve corresponding to $\cL$, indexed by elements of $G_\tau^*$. In this paper, we consider scalar-valued potentials by including the data of $\eta \in G_\tau^*$ into the boundary conditions, or the open phases defined above. The data $\eta$ is naturally interpreted as the type of a flat $U(1)$-connection on the A-model side and corresponds to a \emph{connected} region on the B-model side. Our Theorem \ref{thm:Main} is thus a streamlined version of the vector-valued correspondence in \cite{flz2020remodeling} in the case of a single outer brane (and the case of disks in \cite{FLT22}). This definition of open phases also facilitates the study of the open Crepant Transformation Conjecture, as we will discuss in Section \ref{sect:OCTC}.

\subsubsection{Multiple branes}\label{RemSec:MultiBranes}
For open invariants with multiple boundary components, it is natural to allow different components to land on different Aganagic-Vafa branes in $\cX$, as studied in \cite{BKMP09,Zhou09b,EO15}. It is also required by the study of the open Crepant Transformation Conjecture, as we will see in Section \ref{sect:OCTC}, where the resolution of an ineffective brane would result in multiple branes. Theorem \ref{thm:Main} treats this general setup when $\cX$ is an orbifold. The key observation is that in defining the B-model open potential $\chF_{g,n}$, we may perform local expansions of different components of $\omega_{g,n}$ \emph{independently} on the different local regions $U_{\bfeta_i}$'s.

\subsubsection{Inner branes}\label{RemSec:InnerBranes}
For an outer phase $\bfeta$, the region $U_{\bfeta}$ is a small punctured disk centered at a puncture of the affine curve $C_q$, and the local expansion of a differential (e.g. a component of $\omega_{g,n}$) on $U_{\bfeta}$ may be defined by integration along a path starting at the puncture, as in \cite{flz2020remodeling}. For an inner phase, $U_{\bfeta}$ is a small annulus and the analog of the puncture is a node which only appears in the nodal degeneration of $C_q$ (cf. \cite[Section 5.4]{flz2020remodeling}). In this paper, we give a precise definition of local expansions on the annulus $U_{\bfeta}$ without passing to the nodal limit, thereby giving the definition of the B-model open potentials $\chF_{g,n}$ from $\omega_{g,n}$ when the open phases may be inner.

\subsubsection{Framing dependence and fractional framing}\label{RemSec:Framing}
Both the A- and B-model open potentials in Theorem \ref{thm:Main} depend on the choice of framing. For each open phase $\bfeta$, the framing associates a rational number to the Aganagic-Vafa brane $\cL$ which is more commonly known as the framing of the brane in the literature. Previous works on open mirror symmetry \cite{BKMP09,EO15,FLT22,flz2020remodeling} typically take an integral framing for a reference brane. In general, it is impossible to choose the framing torus so that the associated framings of all branes in $\cX$ are all integers, even in the simplest case $\cX = \bC^3$. Under the multiple-brane setup in this paper, we work in the general situation where the framings of the branes can be fractional. In this case, the B-model open moduli parameter $\hX$ is not directly a coordinate on the local region $U_{\bfeta}$ but a coordinate on a finite cover of $U_{\bfeta}$, where the degree of the cover is the denominator of the fraction. We handle the ambiguities arising from this issue.


\subsection{Remarks on the proof}
The proof of Theorem \ref{thm:Main} follows the overall strategy of \cite{flz2020remodeling}. The all-genus Gromov-Witten potentials of $\cX$ may be expressed by a graph sum formula via the Givental-Teleman quantization \cite{Givental01,Teleman12}, which was proved in \cite{Zong15} for general GKM orbifolds (see Theorem \ref{thm:Zong}). On the other hand, it was proved in \cite{DOSS14} that the topological recursion invariants $\omega_{g,n}$ can be expressed by an analogous graph sum formula (see Theorem \ref{thm:DOSS}). In \cite{flz2020remodeling}, the A- and B-model graph sum formulas are identified modulo a certain leaf factor, which is the case $(g,n)=(0,1)$ of disk potentials of Theorem \ref{thm:Main}.

Following \cite{AV00,AKV02,FL13}, Fang-Liu-Tseng \cite{FLT22} have established disk mirror symmetry for any Aganagic-Vafa brane $\cL$ in a toric Calabi-Yau 3-orbifold $\cX$, where $\cL$ may be effective or ineffective, outer or inner. They assumed $\cL$ to have integral framing, while for the purpose of proving Theorem \ref{thm:Main}, we need to allow fractional framings (see Section \ref{RemSec:Framing}). We first treat this generalization in our proof (see Theorem \ref{thm:DiskExpansion}).

\subsection{Future work}\label{sect:OCTC}
We remark on the application of Theorem \ref{thm:Main} to the open sector of the \emph{Crepant Transformation Conjecture} (CTC) \cite{Ruan02,Ruan06,BryanGraber-crepant,CoatesIritaniTseng-crepant, CoatesRuan13-crepant} for semi-projective toric Calabi-Yau 3-orbifolds. The conjecture relates the open Gromov-Witten theory of a pair of $K$-equivalent toric Calabi-Yau $3$-orbifolds $\cX_\pm$ relative to corresponding Aganagic-Vafa branes. Specifically, take a brane $\cL_-$ in $\cX_-$ whose generic stabilizer group has order $\fm$. If $\cL_-$ is disjoint from the exceptional locus of the crepant transformation, it corresponds to an isomorphic brane $\cL_+$ in $\cX_+$, and the $\fm$ associated open phases are naturally identified. Otherwise, it may be (partially) resolved into two or more branes in $\cX_+$ which together contribute $\fm$ open phases. 


\begin{example}\rm{
Consider the crepant resolution $\varphi \colon \cX_+ = \cA_1 \times \bC \to \cX_- = [\bC^2/\bmu_2]\times \bC$ and let $\cL_-$ be the ineffective outer brane in $\cX_-$ with $\fm = 2$ outer phases. The preimage of $\cL_-$ under $\varphi$ consists of two effective outer branes $\cL_+$, $\cL_{+,2}$ in $\cX_+$ which contribute two outer phases. See Figure \ref{fig:A1Sing} for an illustration. The open CTC relates the open potentials of $\cX_\pm$ with corresponding outer phases. 
}
\end{example}

\begin{figure}[htb]
$$
	\begin{tikzpicture}[scale=0.8]
		\draw (1,1) -- (2,0) -- (1,0) -- (1,2) -- (2,0);
		\draw[->] (3,1) -- (4,1);
		\draw (5.5,0) -- (5.5,2) -- (6.5,0) -- (5.5,0);
		\draw[ultra thick] (0.8, 1.4) -- (1.2, 1.4);
		\node at (0.4, 1.6) {$\cL_{+,2}$};
		\draw[ultra thick] (0.8, 0.4) -- (1.2, 0.4);
		\node at (0.4, 0.4) {$\cL_+$};
        \node at (1, -1) {$\cX_+ = \cA_1 \times \bC$};
        \draw[ultra thick] (5.3, 0.9) -- (5.7, 0.9);
		\node at (4.9, 1) {$\cL_-$};
        \node at (5.8, -1) {$\cX_- = [\bC^2/\bmu_2]\times \bC$};
    \end{tikzpicture}
$$
\caption{Crepant resolution of an $A_1$-singularity and an ineffective brane.}
\label{fig:A1Sing}
\end{figure}

The pair $\cX_\pm$ give different phases of a \emph{global} stringy K\"ahler moduli and correspond to two limit points $\fs_\pm$ in the secondary variety $\mathcal M_B$. The mirror curves $C_{q_\pm}$ of $\cX_\pm$ are fibers of a global family of curves $\mathcal C$ over $\mathcal M_B$ near $\fs_\pm$ respectively. As analyzed in \cite{Yu25}, under a parallel transport from $C_{q_-}$ to $C_{q_+}$, the $\fm$ local regions mirror to the open phases associated to $\cL_-$ are in bijection with the $\fm$ local regions mirror to the induced open phases in $\cX_+$, and the precise bijection depends on the path of parallel transport. This leads to an identification of the B-model disk potentials under analytic continuation and thus the disk CTC. We note that \cite{Yu25} stated the disk CTC in terms of vector-valued potentials, as in \cite{FLT22,flz2020remodeling}, while the formulation of our Theorem \ref{thm:Main} provides a simpler statement in terms of scalar-valued potentials (see Section \ref{RemSec:ScalarValued}). The higher-genus open CTC may then be deduced from the case of disks in combination with the analytic continuation of the topological recursion invariants $\omega_{g,n}$. In particular, the modular transform in the higher-genus CTC naturally arises in the analytic continuation of the fundamental bidifferential $\omega_{0,2}$. We will investigate the all-genus CTC in the forthcoming work \cite{FLYZ-crepant}.

\subsection{Outline of the paper}
In Section \ref{sect:Geometry}, we review the geometry of toric Calabi-Yau 3-orbifolds and Aganagic-Vafa branes and discuss the framings of the branes. In Section \ref{sect:GW}, we define the all-genus open Gromov-Witten potentials in the generalities of Section \ref{RemSec} and give a graph sum formula for them. In Section \ref{sect:DiskMirror}, we study the geometry of mirror curves and generalize the disk mirror theorem to the case of fractional framings (Theorem \ref{thm:DiskExpansion}). In Section \ref{sect:AllGenus}, we define the all-genus B-model open potentials from topological recursion in the generalities of Section \ref{RemSec} and prove the all-genus open mirror symmetry (Theorem \ref{thm:Main}).

\subsection{Acknowledgments}
The authors would like to thank Jinghao Yu for helpful communications that inspired the proof of Proposition \ref{prop:AnnMirror}. The authors would like to thank Simons Center for Geometry and Physics for the support and hospitality during the program Recent Developments in Higher Genus Curve Counting in 2025 where important progress of this work was made.

The work of the first author is partially supported by National Key R\&D Program of China 2023YFA1009803, NSFC 12125101, NSFC 11890661 and NSFC 11831017. The work of the fourth author is partially supported by NSFC grant No. 11701315 and the Natural Science Foundation of Beijing, China grant No. 1252008.

\subsection*{AI disclaimer} 
No generative AI was used in the mathematical work or the writing of this paper.


\section{Open geometry on toric Calabi-Yau 3-orbifolds}\label{sect:Geometry}
In this section, we review the open geometry on toric Calabi-Yau 3-orbifolds relative to Aganagic-Vafa branes. We work over $\bC$.

\subsection{Toric Calabi-Yau 3-orbifolds}\label{sect:TCY3}
Let $X$ be a 3-dimensional simplicial toric variety. We assume that $X$ is \emph{Calabi-Yau}, i.e. $K_X \cong \cO_X$, and \emph{semi-projective}, i.e. the affinization map $X \to X_0 \coloneqq \Spec(H^0(X, \cO_X))$ is projective. Combinatorially, let $N \cong \bZ^3$ and let $\Sigma$ be the finite simplicial fan in $N_{\bR} \coloneqq N \otimes \bR$ associated to $X$. The dense algebraic torus acting on $X$ is $\bT \coloneqq N \otimes \bC^*$ and has cocharacter lattice $N$ and character lattice $\Hom(N, \bZ) =\colon M$. 

Let $\Sigma(d)$, $d = 0, 1, 2, 3$, denote the set of $d$-dimensional cones in $\Sigma$. We assume that every cone in $\Sigma$ is contained in some 3-cone. We set $\fp' \coloneqq |\Sigma(1)| - 3$ and list the rays in $\Sigma$ as $\Sigma(1) = \{\rho_1, \dots, \rho_{3+\fp'}\}$. For $i = 1, \dots, 3+\fp'$, let $b_i \in N$ be the primitive integral generator of the ray $\rho_i$. Then the Calabi-Yau condition implies that there exists $e_3^\vee \in M$ such that $\inner{e_3^\vee, b_i} = 1$ for all $i = 1, \dots, 3+\fp'$, where $\inner{-,-}$ denotes the natural pairing. Let $N' \coloneqq \ker (e_3^\vee\colon N \to \bZ)$ and $\bT' \coloneqq N' \otimes \bC^*$ be the Calabi-Yau 2-subtorus of $\bT$, whose cocharacter lattice is $N'$ and character lattice is $\Hom(N', \bZ) =\colon M'$.

We take a $\bZ$-basis $\{e_1^\vee, e_2^\vee, e_3^\vee\}$ of $M$ and let $\{e_1, e_2, e_3\}$ be the dual $\bZ$-basis of $N$. Let $(m_i, n_i, 1)$ be the coordinate of $b_i$ in this basis, and let $P$ be the convex hull in $N'_{\bR} \coloneqq N' \otimes \bR$ of all the points $(m_i, n_i)$. Then the semi-projectivity of $X$ implies that the support of $\Sigma$ is the cone over $P \times \{1\}$ in $N_{\bR}$.

Let $\cX$ be the \emph{canonical} toric Deligne-Mumford stack \cite{BCS05,FMN10} of the simplicial toric variety $X$, which has trivial generic stabilizers and is a toric Calabi-Yau 3-orbifold. It may be combinatorially characterized by the \emph{canonical} stacky fan $\bSi^\can = (N, \Sigma, (b_1, \dots, b_{3+\fp'}))$ of the simplicial fan $\Sigma$. Alternatively, we use the \emph{extended} stacky fan \cite{Jiang08} $\bSi^\ext = (N, \Sigma, (b_1, \dots, b_{3+\fp}))$ where $\fp \coloneqq |P \cap N'| -3$ and $b_{4+\fp'}, \dots, b_{3+\fp}$ is a listing of the additional lattice points in $(P \times \{1\}) \cap N$. We have a surjective homomorphism
$$
    \phi\colon \tN \coloneqq \bigoplus_{i=1}^{3+\fp} \bZ \tb_i \longrightarrow N, \qquad \tb_i \longmapsto b_i.
$$
Let $\bL \coloneqq \Ker(\phi) \cong \bZ^{\fp}$. We have the short exact sequence of lattices
\begin{equation}\label{eqn:NExactSeq}
    \xymatrix{
        0 \ar[r] & \bL \ar[r]^\psi & \tN \ar[r]^\phi & N \ar[r] & 0.
    }
\end{equation}

Given a cone $\sigma \in \Sigma(d)$, we introduce the index sets
$$
    I'_\sigma \coloneqq \{ i \in \{1, \dots, 3+\fp'\} \colon \rho_i \subseteq \sigma\}, \qquad I_\sigma \coloneqq \{1, \dots, 3+\fp\} \setminus I'_\sigma
$$
where $|I'_\sigma| = d$. Let $V(\sigma)$ denote the $\bT$-invariant closed subvariety of $X$ and $\cV(\sigma)$ denote the $\bT$-invariant closed substack of $\cX$, both having codimension $d$. 


\subsection{GIT quotient and symplectic quotient}
Tensoring \eqref{eqn:NExactSeq} with $\bC^*$ gives a short exact sequence of tori
\begin{equation}\label{eqn:TExactSeq}
    \xymatrix{
        1 \ar[r] & G \ar[r]^\psi & \tbT \ar[r]^\phi & \bT \ar[r] & 1
    }
\end{equation}
whose character lattices fit into the short exact sequence
$$    
    \xymatrix{
        0 \ar[r] & M \ar[r]^{\phi^\vee} & \tM \ar[r]^{\psi^\vee} & \bL^\vee \ar[r] & 0
    }
$$
obtained by applying $\Hom(-,\bZ)$ to \eqref{eqn:NExactSeq}. Consider the action of $G$ on $\tN \otimes \bC$ $\cong \bC^{3+\fp} = $ $ \Spec(\bC[Z_1, \dots, Z_{3+\fp}])$ via the inclusion $\psi$ above. In the linear subspace $\bC^{3+\fp'} = \Spec(\bC[Z_1, \dots, Z_{3+\fp'}])$, let $Z(\Sigma)$ be closed subvariety defined by the ideal generated by the monomials $\prod_{\rho_i \not \subseteq \sigma} Z_i$ for cones $\sigma$ in the fan $\Sigma$. Consider
$$
    U \coloneqq (\bC^{3+\fp'} \setminus Z(\Sigma)) \times (\bC^*)^{\fp - \fp'}
$$
which is a dense, $\tbT$-invariant, open subvariety of $\bC^{3+\fp}$. Then we have
$$
    X = U/G, \qquad \cX = [U/G].
$$
In particular, the above presentation of $\cX$ is a GIT quotient.

Consider the identifications $\tM \cong H^2_{\tbT}(\bC^{3+\fp}; \bZ)$ and  $\bL^\vee \cong H^2_G(\bC^{3+\fp}; \bZ)$. For $i = 1, \dots, 3+ \fp$, let $D_i^{\bT} \in \tM$ (resp. $D_i \in \bL^\vee$) be the $\tbT$-equivariant (resp. $G$-equivariant) Poincar\'e dual of the coordinate hyperplane $\{Z_i = 0\}$ in $\bC^{3+\fp}$. Then $\{D_1^{\bT}, \dots, D_{3+\fp}^{\bT}\}$ is the $\bZ$-basis of $\tM$ dual to the basis $\{\tb_1, \dots, \tb_{3+\fp}\}$ of $\tN$, and we have $\psi^\vee(D_i^{\bT}) = D_i$ for all $i$. The restriction to the open subset $U \subseteq \bC^{3+\fp}$ induces a surjective homomorphism
\begin{equation}\label{eqn:Kirwan}
    \kappa\colon \bL^\vee \cong H^2_G(\bC^{3+\fp}; \bZ) \longrightarrow H^2_G(U; \bZ) \cong H^2(\cX; \bZ)
\end{equation}
whose kernel is $\bigoplus_{i = 4+\fp'}^{3+\fp} \bZ D_i$. For $i = 1, \dots, 3 + \fp'$, let $\Dbar_i \coloneqq \kappa(D_i)$; when $i \ge 4$, let $\Dbar_i^{\bT'} \in H^2_{\bT'}(\cX; \bZ)$ be the unique lift of $\Dbar_i$ whose restriction to $\fp_{\si_0}$ is zero.

For $\sigma \in \Sigma(3)$, let
$
    \bK_\sigma^\vee \coloneqq \bigoplus_{i \in I_\sigma} \bZ D_i,
$
and define the \emph{extended $\sigma$-nef cone} to be
$$
    \tNef_\sigma \coloneqq \sum_{i \in I_\sigma} \bR_{\ge 0} D_i
$$
which is a top-dimensional cone in $\bL^\vee_{\bR} \coloneqq \bL^\vee \otimes \bR$. The \emph{extended nef cone} of $\cX$ is
$$
    \tNef(\cX) \coloneqq \bigcap_{\sigma \in \Sigma(3)} \tNef_\sigma
$$
whose interior $C(\cX)$ is the \emph{extended K\"ahler cone} of $\cX$. 

Let $G_{\bR} \cong U(1)^{\fp}$ be the maximal compact subgroup of $G$, whose dual Lie algebra is canonically identified with $\bL^\vee_{\bR}$. The action of $G$ on the symplectic manifold $\left(\bC^{3+\fp}, \sqrt{-1} \sum_{i = 1}^{3 + \fp} dZ_i \wedge d\Zbar_i \right)$ restricts to a Hamiltonian action by $G_{\bR}$ with moment map
$$
    \tmu\colon \bC^{3+\fp} \longrightarrow \bL^\vee_{\bR}.
$$
Let $\zeta \in C(\cX)$ be an extended K\"ahler class. Then the symplectic quotient $[\tmu^{-1}(\zeta)/G_{\bR}]$ is a K\"ahler orbifold equipped with a symplectic structure $\omega(\zeta)$ and is isomorphic to $\cX$ as a complex orbifold.

Let $\bT_{\bR}$ (resp. $\bT'_{\bR}$) be the maximal compact subgroup of $\bT$ (resp. $\bT'$), whose dual Lie algebra is canonically identified with $M_{\bR} \coloneqq M \otimes \bR$ (resp. $M'_{\bR} \coloneqq M' \otimes \bR$). The actions of $\bT$, $\bT'$ on the K\"ahler orbifold $(\cX, \omega(\zeta))$ restricts to Hamiltonian actions by $\bT_{\bR}, \bT'_{\bR}$ with moment maps
$$
    \mu_{\bT_{\bR}}\colon \cX \longrightarrow M_{\bR}, \qquad \mu_{\bT'_{\bR}}: \cX \longrightarrow M'_{\bR}.
$$
The \emph{toric graph} of $\cX$ is the image $\mu_{\bT'_{\bR}}(\cX^1)$ in the plane $M'_{\bR}$ of the union $\cX^1$ of $0$- and $1$-dimensional orbits in $\cX$.

\subsection{Stabilizers and inertia}

For a cone $\sigma$ in $\Sigma$, let $G_\sigma$ denote the group of generic stabilizers of the closed substack $\cV(\sigma)$, which is a finite subgroup of $G$, and let $G^*_\sigma \coloneqq \Hom(G_\sigma, \bC^*)$. 
As a set, $G_\sigma$ is in bijection with
$$
    \Box(\sigma) \coloneqq \left\{v \in N : v = \sum_{i \in I'_\sigma} c_i(v) b_i \text{ for some } c_i(v) \in [0,1) \cap \bQ \right\}.
$$
We define $c_i(v)$ by the above equation; note that $c_i(v) = 0$ for all $i \in I_\sigma$. 

Let $\bL_{\bQ} \coloneqq \bL \otimes \bQ$ and $\bL_{\bR} \coloneqq \bL \otimes \bR$. For $\sigma \in \Sigma(3)$, let
$
    \bK_\sigma \coloneqq \{\beta \in \bL_{\bQ} : \inner{D, \beta} \in \bZ \text{ for all } D \in \bK_\sigma^\vee\}
$
be the dual lattice of $\bK_\sigma^\vee$, where $\inner{-,-}$ denotes the natural pairing. Define the \emph{extended $\sigma$-Mori cone} to be
$$
    \tNE_\sigma \coloneqq \{\beta \in \bL_{\bR} : \inner{D, \beta} \ge 0 \text{ for all } D \in \tNef_\sigma\} 
$$
which is the dual cone of $\tNef_\sigma$. Let
$
    \bK_{\eff, \sigma} \coloneqq \bK_\sigma \cap \tNE_\sigma.
$
The \emph{extended Mori cone} of $\cX$ is
$$
    \tNE(\cX) \coloneqq \bigcup_{\sigma \in \Sigma(3)} \tNE_\sigma.
$$
Moreover, let
$$
    \bK \coloneqq \bigcup_{\sigma \in \Sigma(3)} \bK_\sigma, \qquad \bK_{\eff} \coloneqq \bK \cap \tNE(\cX) = \bigcup_{\sigma \in \Sigma(3)} \bK_{\eff, \sigma}.
$$

For $\sigma \in \Sigma(3)$, there is an identification $G_\sigma\cong \bK_\sigma / \bL$ and a bijection $\bK_\sigma / \bL \to \Box(\sigma)$ given by
$$
    v\colon \bK_\sigma \longrightarrow \Box(\sigma), \qquad \beta \longmapsto \sum_{i \in I'_\sigma}  \{-\inner{D_i, \beta}\} b_i.
$$
Here, $\{x\} = x - \floor{x}$ denotes the fractional part of $x \in \bR$. In particular, we have $c_i(v(\beta)) = \{-\inner{D_i, \beta}\}$. The above bijection restricts to a bijection $G_\tau \to \Box(\tau)$ for any $\tau \subseteq \sigma$.

The \emph{inertia stack} $\cI\cX$ of $\cX$ has connected components indexed by the set
$$
    \Box(\cX) \coloneqq \bigcup_{\sigma \in \Sigma(3)} \Box(\sigma).
$$
Let $\cX_v$ denote the component indexed by $v \in \Box(\cX)$. The \emph{age} of $\cX_v$ is defined to be
$$
    \age(v) \coloneqq \sum_{i = 1}^{3+\fp'} c_i(v) \quad \in \{0, 1, 2\}.
$$




\subsection{Equivariant Chen-Ruan cohomology}

The \emph{Chen-Ruan cohomology} \cite{CR02} of the toric orbifold $\cX$ (with $\bC$-coefficients) is
$$
    H^*_{\CR}(\cX; \bC) = \bigoplus_{v \in \Box(\cX)} H^*(\cX_v; \bC)[2\age(v)]
$$
where $[2\age(v)]$ denotes the degree shift by $2\age(v)$. For $v \in \Box(\cX)$, let $\one_v$ denote the unit of $H^*(\cX_v; \bC)$. We have
\begin{align*}
    & \dim_{\bC} H^0_{\CR}(\cX; \bC)  = 1, && \dim_{\bC} H^2_{\CR}(\cX; \bC) = \fp = \fg + \fn - 3,\\
    & \dim_{\bC} H^4_{\CR}(\cX; \bC) = \fg, && \dim_{\bC} H^*_{\CR}(\cX; \bC) = 1 + \fp + \fg = 2\fg - 2 + \fn
\end{align*}
where $\fg \coloneqq |\Int(P) \cap N'|$ (resp. $\fn \coloneqq | \partial P \cap N'|$) is the number of interior (reps. boundary) lattice points in $P$.

We also consider the $\bT'$-equivariant setting. Let $\su_1, \su_2$ be characters of $\bT'$ corresponding to the images of $e_1^\vee, e_2^\vee$ under the projection $M \to M'$. We use $\su_1, \su_2$ as equivariant parameters of $\bT'$ and view them as elements of $M'$. We have
$$
    M' = \bZ\su_1 \oplus \bZ\su_2, \qquad H^*_{\bT'}(\pt) = \bZ[\su_1, \su_2].
$$
Let
$$
    R_{\bT'} \coloneqq H^*_{\bT'}(\pt; \bC) = \bC[\su_1, \su_2], \qquad
    S_{\bT'} \coloneqq \bC(\su_1, \su_2).
$$
The $\bT'$-equivariant Chen-Ruan cohomology of $\cX$ (with $\bC$-coefficients) is
$$
    H^*_{\CR, \bT'}(\cX; \bC) = \bigoplus_{v \in \Box(\cX)} H^*_{\bT'}(\cX_v; \bC)[2\age(v)].
$$
Together with the $\bT'$-equivariant Chen-Ruan cup product $\star_{\cX}$ and the $\bT'$-equivariant Poincar\'e pairing, the extension $H^*_{\CR, \bT'}(\cX; \bC) \otimes_{R_{\bT'}} S_{\bT'}$ is a Frobenius algebra over $S_{\bT'}$.


For $\sigma \in \Sigma(3)$, let $\cX_\sigma \cong [\bC^3/G_\sigma]$ be the associated affine toric Calabi-Yau 3-orbifold which is an open substack of $\cX$. The $\bT'$-equivariant Chen-Ruan cohomology of $\cX_\sigma$ is
$$
    H^*_{\CR, \bT'}(\cX_\sigma; \bC) = \bigoplus_{h \in G_\sigma} \bC \one_h
$$
where $\one_h$ has degree $2\age(h)$. Let $\{\sw_{i, \sigma}\}_{i \in I'_\sigma} \subset M'_{\bQ} \coloneqq M \otimes \bQ$ denote the tangent $\bT'$-weights at the fixed point $\cV(\sigma)$ along the three coordinate axes. Then the $\bT'$-equivariant Chen-Ruan cup product is given by
$$
    \one_h \star_{\cX_\sigma} \one_{h'} = \one_{hh'} \prod_{i \in I'_\sigma}^3 \sw_{i, \sigma}^{c_i(h) + c_i(h') - c_i(hh')}
$$
and the $\bT'$-equivariant Poincar\'e pairing is given by
$$
    \left(\one_h, \one_{h'}\right)_{\cX_\sigma, \bT'} = \frac{\delta_{hh', 1}}{|G_\sigma| \prod_{i \in I'_\sigma} \sw_{i, \sigma}^{\delta_{c_i(h), 0}}}.
$$

Let $\bSt$ be the minimal field extension of $S_{\bT'}$ that contains
$$
    \left\{\sw_{i, \sigma}^{c_{i}(h)} : \sigma \in \Sigma(3), i \in I'_\sigma, h \in G_\sigma \right\} \cup \left\{ \prod_{i \in I'_\sigma} \sw_{i, \sigma}^{\frac{1}{2}} : \sigma \in \Sigma(3) \right\}.
$$
For $\sigma \in \Sigma(3)$ and $h \in G_\sigma$, let
$$
    \bar{\one}_h \coloneqq \frac{\one_h}{\prod_{i \in I'_\sigma}\sw_{i, \sigma}^{c_{i}(h)}} \qquad \in H^*_{\CR, \bT'}(\cX_\sigma; \bC) \otimes_{R_{\bT'}} \bSt.
$$
Moreover, for $\gamma \in G_\sigma^*$ with $\chi_\gamma\colon G_\sigma \to \bC^*$ denoting the corresponding character of $G_\sigma$, let
$$
    \bar{\phi}_\gamma \coloneqq \frac{1}{|G_\sigma|} \sum_{h \in G_\sigma} \chi_\gamma(h^{-1}) \bar{\one}_h.
$$
Then we have
$$
    \bar{\phi}_\gamma \star_{\cX_\sigma} \bar{\phi}_{\gamma'} = \delta_{\gamma, \gamma'} \bar{\phi}_\gamma, \qquad \left(\bar{\phi}_\gamma, \bar{\phi}_{\gamma'}\right)_{\cX_\sigma, \bT'} = \frac{\delta_{\gamma, \gamma'}}{|G_\sigma|^2 \prod_{i \in I'_\sigma} \sw_{i, \sigma}}.
$$
Therefore, $\{\bar{\phi}_\gamma\}_{\gamma \in G_\sigma^*}$ gives a canonical basis of the semi-simple Frobenius algebra
$$
    \left(H^*_{\CR, \bT'}(\cX_\sigma; \bC) \otimes_{R_{\bT'}} \bSt, \star_{\cX_\sigma}, \left(-, -\right)_{\cX_\sigma, \bT'} \right)
$$
over $\bSt$.

For $\sigma \in \Sigma(3)$, let $\iota_{\sigma, o}\colon \cX_\sigma \to \cX$ denote the open immersion. The pullbacks along these maps give an isomorphism of Frobenius algebras
$$
    \bigoplus_{\sigma \in \Sigma(3)} \iota_{\sigma, o}^*\colon H^*_{\CR, \bT'}(\cX; \bC) \otimes_{R_{\bT'}} \bSt \longrightarrow \bigoplus_{\sigma \in \Sigma(3)} H^*_{\CR, \bT'}(\cX_\sigma; \bC) \otimes_{R_{\bT'}} \bSt
$$
over $\bSt$. The locally-defined canonical basis elements above provide a canonical basis globally. Specifically, we introduce the index set
$$
    I_\Sigma \coloneqq \{ \bsi = (\sigma, \gamma) : \sigma \in \Sigma(3), \gamma \in G_\sigma^* \}.
$$
For $\bsi = (\sigma, \gamma) \in I_\Sigma$, let $\phi_{\bsi}$ be the unique element in $H^*_{\CR, \bT'}(\cX; \bC) \otimes_{R_{\bT'}} \bSt$ such that $\phi_{\bsi} \big|_{\cX_\sigma} = \bar{\phi}_\gamma$ and $\phi_{\bsi} \big|_{\cV(\sigma')} = 0$ for all $\sigma' \in \Sigma(3) \setminus \{\sigma\}$. Then
$$
    \phi_{\bsi} \star_{\cX} \phi_{\bsi'} = \delta_{\bsi, \bsi'} \phi_{\bsi}, \qquad \left(\phi_{\bsi}, \phi_{\bsi'}\right)_{\cX, \bT'} = \frac{\delta_{\bsi, \bsi'}}{|G_\sigma|^2 \prod_{i \in I'_\sigma} \sw_{i, \sigma}}.
$$
It follows that $\{\phi_{\bsi}\}_{\bsi \in I_\Sigma}$ is a canonical basis of the semi-simple Frobenius algebra
$$
    \left(H^*_{\CR, \bT'}(\cX; \bC) \otimes_{R_{\bT'}} \bSt, \star_{\cX_\sigma}, \left(-, -\right)_{\cX, \bT'} \right)
$$
over $\bSt$. For $\bsi = (\sigma, \gamma) \in I_\Sigma$, we define $\Delta^\bsi \coloneqq |G_\sigma|^2 \prod_{i \in I'_\sigma} \sw_{i, \sigma}$ define $\hat{\phi}_\bsi \coloneqq \sqrt{\Delta^{\bsi}} \phi_{\bsi}$.
Then
$$
    \hat{\phi}_{\bsi} \star_{\cX} \hat{\phi}_{\bsi'} = \delta_{\bsi, \bsi'} \sqrt{\Delta^{\bsi}} \hat{\phi}_{\bsi}, \qquad \left(\hat{\phi}_{\bsi}, \hat{\phi}_{\bsi'}\right)_{\cX, \bT'} = \delta_{\bsi, \bsi'}.
$$
We thus refer to $\{\hat{\phi}_{\bsi}\}_{\bsi \in I_\Sigma}$ as the \emph{classical normalized canonical basis} of $H^*_{\CR, \bT'}(\cX; \bC) \otimes_{R_{\bT'}} \bSt$.

\subsection{Flags and coordinates}\label{sect:Flags}
A \emph{flag} in $\Sigma$ is a pair of cones $\bff = (\tau, \sigma)$ where $\tau \in \Sigma(2)$, $\sigma \in \Sigma(3)$, and $\tau \subset \sigma$. It corresponds to the inclusion of the $\bT$-fixed point $\fp_\sigma \coloneqq \cV(\sigma)$ (resp. $p_\sigma \coloneqq V(\sigma)$) in the $\bT$-invariant line $\fl_\tau \coloneqq \cV(\tau)$ (resp. $l_\tau \coloneqq V(\tau)$) in $\cX$ (resp. $X$). The stabilizer groups fit into a short exact sequence
$$
    \xymatrix{
           1 \ar[r] & G_\tau \ar[r] & G_\sigma \ar[r] & \bmu_{\fr_{(\tau,\si)}} \ar[r] & 1
    }
$$
where $G_\tau$ is a cyclic subgroup of order $\fm_\tau \coloneqq |G_\tau|$ and the quotient is a cyclic group of order $\fr_{\bff} \coloneqq \frac{|G_\sigma|}{|G_\tau|}$. Let $F(\Sigma)$ denote the set of flags in $\Sigma$. 

A flag $\bff = (\tau,\si)$ determines an ordered triple of indices $(i_1 = i_1^\bff, i_2 = i_2^\bff, i_3 = i_3^\bff)$ in $\{1,\ldots, 3+\fp'\}$. They are characterized by $I_\si' = \{ i_1, i_2, i_3\}$, $I_\tau'=\{ i_2, i_3\}$, and
$b_{i_1}, b_{i_2}, b_{i_3}$ are vertices of a triangle in $P \times \{1\}$ in the counterclockwise order.
They also determine a $\bZ$-basis $\{e_1^{\bff}, e_2^{\bff}, e_3^{\bff}\}$ of $N$ such that
$$
b_{i_1} = \fr_{\bff} e_1^{\bff}   -\fs_{\bff}e_2^{\bff}  + e_3^{\bff} ,\qquad
b_{i_2} = \fm_\tau e_2^{\bff}   + e_3^{\bff},\qquad
b_{i_3} = e_3^{\bff}
$$
for a unique $\fs_{\bff} \in \{0,1,\ldots, \fr_{\bff}-1\}$. Then
$\bZ e_1^{\bff} \oplus \bZ e_2^{\bff} = \bZ e_1\oplus \bZ e_2$ and $e_1^{\bff}\wedge
e_2^{\bff} = e_1\wedge e_2$. For $i = 1, \dots, 3+\fp$, we denote the coordinates of $b_i$ under this basis as $(m_i^{\bff}, n_i^{\bff}, 1)$, where
$$
    m_{i_1}^{\bff} = \fr_{\bff}, \quad n_{i_1}^{\bff} = -\fs_{\bff}, \quad m_{i_2}^{\bff} = \fm_\tau, \quad n_{i_2}^{\bff} = m_{i_3}^{\bff} = n_{i_3}^{\bff} = 0.
$$
Let $\{e_1^{\bff\vee}, e_2^{\bff\vee}, e_3^{\vee}\}$ denote the dual $\bZ$-basis of $M$. Let $\su_1^{\bff}, \su_2^{\bff} \in M'$ denote the characters of $\bT'$ corresponding to the images of $e_1^{\bff\vee}, e_2^{\bff\vee}$ under the projection $M \to M'$. For $j = 1,2,3$, let $\sw_{\bff, j} \in M'_{\bQ}$ denote the tangent $\bT'$-weight at $\fp_\sigma$ along the $\bT$-invariant line corresponding to the two cone whose index set is $\{i_1, i_2, i_3\} \setminus \{i_j\}$. We have $$
    \sw_{\bff, 1} = \frac{1}{\fr_{\bff}} \su_1^{\bff}, \quad \sw_{\bff,2} = \frac{\fs_{\bff}}{\fr_{\bff}\fm_\tau}\su_1^{\bff} + \frac{1}{\fm_\tau}\su_2^{\bff}, \quad \sw_{\bff,3} = - \frac{\fs_{\bff} + \fm_\tau}{\fr_{\bff}\fm_\tau}\su_1^{\bff} - \frac{1}{\fm_\tau}\su_2^{\bff}.
$$

Let $\Sigma(2)_c \subset \Sigma(2)$ denote the subset of 2-cones $\tau$ such that $\fl_\tau$ is compact, or equivalently, $\tau$ is contained in two different 3-cones $\sigma_\pm \in \Sigma(3)$. In this case, we say that $\tau$ is an \emph{inner} 2-cone, and denote the two reference flags as $\bff_\pm \coloneqq (\tau, \sigma_\pm)$. Otherwise, $\tau$ is contained in a unique 3-cone $\sigma \in \Sigma(3)$. In this case, we say that $\tau$ is an \emph{outer} 2-cone, and will use $\bff = (\tau, \sigma)$ as the reference flag.

In this paper, we fix a reference flag $\bff_0 = (\tau_0,\si_0)$ such that $\tau_0$ is outer. We assume that the $\bZ$-basis $\{e_1^\vee, e_2^\vee, e_3^\vee\}$ of $M$ chosen in Section \ref{sect:TCY3} is such that $e_1^\vee = e_1^{\bff_0\vee}$, $e_2^\vee = e_2^{\bff_0\vee}$. We also assume without loss of generality that $i_1^{\bff_0} = 1$, $i_2^{\bff_0} = 2$, $i_3^{\bff_0} = 3$. Let
$$
\fr = \fr_{\bff_0}, \qquad \fs = \fs_{\bff_0}, \qquad \fm=\fm_{\tau_0}.
$$
Then $
    b_1 = \fr e_1 - \fs e_2 + e_3,
    b_2 = \fm e_2 + e_3,
    b_3 = e_3,
$
and $(m_i, n_i) = (m_i^{\bff_0}, n_i^{\bff_0})$ for all $i$.

For $\bff \in F(\Sigma)$, we have the changes of bases
$$
    e_1^{\bff} = a_{\bff} e_1 + b_{\bff} e_2, \qquad e_2^{\bff} = c_{\bff} e_1 + d_{\bff} e_2,
$$
$$
    e_1^{\bff\vee} = d_{\bff}e_1^\vee - c_{\bff}e_2^{\vee}, \qquad e_2^{\bff\vee} = -b_{\bff}e_1^\vee + a_{\bff}e_2^{\vee}
$$
for $a_{\bff}, b_{\bff}, c_{\bff}, d_{\bff} \in \bZ$ with $a_{\bff}d_{\bff} - b_{\bff}c_{\bff} = 1$.


Furthermore, we clarify the coordinates around an inner 2-cone $\tau \in \Sigma(2)_c$. Let $\bff_+ = (\tau, \sigma_+)$, $\bff_- = (\tau, \sigma_-)$ be the two associated flags. We have
$$
    \su_1^{\bff_+} + \su_1^{\bff_-} = \fr_{\bff_+} \sw_{\bff_+, 1} + \fr_{\bff_-} \sw_{\bff_-, 1} = 0
$$
which implies that 
\begin{equation}\label{eqn:InnerCoeffRelation}
    c_{\bff_+} + c_{\bff_-} = d_{\bff_+} + d_{\bff_-} = 0.
\end{equation}
Consider the compact curve $l_\tau \subset X$ which represents a class $[l_\tau] \in \bK_{\eff} \subset \bL_{\bQ}$. We have
$$
    \inner{D_{i_1^{\bff^{\pm}}}, [l_\tau]} = \frac{1}{\fr_{\bff_\pm}}, 
$$
$$
    \inner{D_{i_2^{\bff^+}}, [l_\tau]} = \inner{D_{i_3^{\bff^-}}, [l_\tau]} = \frac{\sw_{\bff_+,2} - \sw_{\bff_-, 3}}{\su_1^{\bff_+}} = \frac{\fs_{\bff_+}}{\fr_{\bff_+}\fm_\tau} - \frac{\fs_{\bff_-}}{\fr_{\bff_-}\fm_\tau} - \frac{1}{\fr_{\bff_-}} + \frac{1}{\fm_\tau} \frac{\su_2^{\bff_+} + \su_2^{\bff_-}}{\su_1^{\bff_+}},
$$
$$
    \inner{D_{i_3^{\bff^+}}, [l_\tau]} = \inner{D_{i_2^{\bff^-}}, [l_\tau]} = \frac{\sw_{\bff_+,3} - \sw_{\bff_-, 2}}{\su_1^{\bff_+}} = -\frac{\fs_{\bff_+}}{\fr_{\bff_+}\fm_\tau} + \frac{\fs_{\bff_-}}{\fr_{\bff_-}\fm_\tau} - \frac{1}{\fr_{\bff_+}} - \frac{1}{\fm_\tau} \frac{\su_2^{\bff_+} + \su_2^{\bff_-}}{\su_1^{\bff_+}}.
$$
Note that the sum of the above two degrees sum up to $- \frac{1}{\fr_{\bff_+}} - \frac{1}{\fr_{\bff_-}}$. We have $\inner{D_i, [l_\tau]} = 0$ for $i \not \in \{i_1^{\bff_+}, i_1^{\bff_-}, i_2^{\bff_+}, i_3^{\bff_+}\}$.

\subsection{Aganagic-Vafa branes}\label{sect:AVBranes}
For $\tau \in \Sigma(2)$, let $\cL_\tau$ be an \emph{Aganagic-Vafa Lagrangian suborbifold} of the K\"ahler orbifold $\cX \cong [\tmu^{-1}(\zeta)/G_{\bR}]$ such that the unique $\bT$-invariant line it intersects is $\fl_\tau$. It takes form $\cL_\tau \cong [\tL_\tau/G_{\bR}]$ where
$$
    \tL_\tau = \tmu^{-1}(\zeta) \cap \left\{ \sum_{i=1}^{3+\fp} l_i^1 |Z_i|^2 = c_1, \sum_{i=1}^{3+\fp} l_i^2 |Z_i|^2 = c_2, \sum_{i=1}^{3 + \fp} \arg(Z_i) = c_3  \right\}
$$
for some $l^1, l^2 \in \bZ^{3+\fp}$ satisfying $\sum_{i=1}^{3+\fp} l^1_i = \sum_{i=1}^{3+\fp} l^2_i = 0$. The constants $c_1, c_2, c_3 \in \bR$ are chosen such that the image $\mu_{\bT'_{\bR}}(\cL_\tau)$ is a point in the toric graph of $\cX$ lying in the interior of the edge corresponding to $\tau$. Let $L_\tau \subset X$ be the coarse moduli space of $\cL_\tau$. When $\tau \in \Sigma(2)_c$ is inner, we say that $\cL_\tau$ is an \emph{inner brane} and use the two reference flags $\bff_\pm = (\tau, \sigma_\pm)$. When $\tau \not \in \Sigma(2)_c$ is outer, we say that $\cL_\tau$ is an \emph{outer brane} and use the reference flag $\bff = (\tau, \sigma)$. We denote
$$
    \cL = \bigsqcup_{\tau \in \Sigma(2)} \cL_\tau
$$
whose coarse moduli space is $L = \bigsqcup_{\tau \in \Sigma(2)} L_\tau \subset X$.

For $\bff = (\tau, \sigma) \in F(\Sigma)$, let $\alpha_{\bff}$ denote the disk bounded by $L_\tau$ in $l_\tau$ and centered at $p_\sigma$. When $\tau \in \Sigma(2)_c$ is inner, we have the relation
$$
    [\alpha_{\bff_+}] + [\alpha_{\bff_-}] = [l_\tau]
$$
in $H_2(X, L; \bZ)$. Denoting this relation by $\sim$, we have
$$
    H_2(X, L; \bZ) \cong H_2(X; \bZ) \oplus \bigoplus_{\bff \in F(\Sigma)} \bZ [\alpha_{\bff}] \bigg/ \sim.
$$
Similarly, the semigroups $E(\cX)$, $E(\cX, \cL)$ of effective curve classes are related by
$$
    E(\cX, \cL) \cong E(\cX) \oplus \bigoplus_{\bff \in F(\Sigma)} \bZ_{\ge 0}[\alpha_{\bff}] \bigg/ \sim.
$$

For $\tau \in \Sigma(2)$, we have $\pi_1(\cL_\tau) \cong \bZ \times G_\tau$. Flat $U(1)$-connections $\nabla$ on $\cL_\tau$ are thus parameterized by
$$
    \Hom(\pi_1(\cL_\tau), U(1)) \cong U(1) \times G_\tau^*.
$$
If the second component in the above parameterization is $\eta \in G_\tau^*$, we say that $\nabla$ is \emph{of type} $\eta$. The pair of an Aganagic-Vafa brane and a flat $U(1)$-connection on it, which may be viewed as an \emph{open phase} of the present open geometry, is indexed by the set
$$
    J_\Sigma \coloneqq \{\bfeta = (\tau, \eta) : \tau \in \Sigma(2), \eta \in G_\tau^*\}.
$$
For $\bfeta \in J_\Sigma$, we will use $\cL_{\bfeta}$ to denote the pair $(\cL_\tau, \nabla)$ where $\nabla$ is of type $\eta$.



\subsection{Framing}\label{sect:Framing}
The boundary condition of the open geometry also consists of a uniform choice of \emph{framing} of the Aganagic-Vafa branes $\{\cL_\tau\}_{\tau \in \Sigma(2)}$. Specifically, we choose a parameter $\sff \in \bQ$ written as
$$
    \sff = \frac{\sfb}{\sfa}
$$
for coprime integers $\sfa, \sfb \in \bZ$ such that $\sfa > 0$. This determines a 1-dimensional subtorus
$$
    \bT_{\sff} \coloneqq \ker(\sfa\su_2 - \sfb\su_1)
$$
of the Calabi-Yau 2-torus $\bT'$ which we call the \emph{framing subtorus}. Let $M_{\sff} \coloneqq \Hom(\bT_{\sff}, \bC^*) \cong \bZ \su$ be the character lattice of $\bT_{\sff}$, where $\su$ is the generator whose dual $\sv \in N_{\sff}$ is mapped to the primitive cocharacter $\sv_{\sff} = \sfa e_1 + \sfb e_2 \in N'$ of $\bT'$. We denote the restriction from $\bT'$-weights to $\bT_{\sff}$-weights as
$$
    \big|_{\bT_{\sff}} = \inner{-, \sv_{\sff}} \su\colon M' \longrightarrow M_{\sff}, \qquad \su_1 \longmapsto \sfa \su, \quad \su_2 \longmapsto \sfb \su.
$$

For $\bff = (\tau, \sigma) \in F(\Sigma)$, we have
$$
    \su_1^{\bff} \big|_{\bT_{\sff}} = (\sfa d_{\bff} - \sfb c_{\bff})\su, \qquad \su_2^{\bff} \big|_{\bT_{\sff}} = (-\sfa b_{\bff} + \sfb a_{\bff}) \su.
$$
Recall that $a_{\bff}d_{\bff} - b_{\bff}c_{\bff} = 1$. Since $\sfa$ and $\sfb$ are coprime, $\sfa d_{\bff} - \sfb c_{\bff}$ and $-\sfa b_{\bff} + \sfb a_{\bff}$ are also coprime. We write
$$
    \sfa_{\bff} \coloneqq \sfa d_{\bff} - \sfb c_{\bff} = \inner{\su_1^{\bff}, \sv_{\sff}}, \qquad \sfb_{\bff} \coloneqq -\sfa b_{\bff} + \sfb a_{\bff} = \inner{\su_2^{\bff}, \sv_{\sff}}.
$$

\begin{assumption}\label{assump:GenericFraming}\rm{
We choose $\sff \in \bQ$ generically such that $\sfa_{\bff} \neq 0$ for any flag $\bff \in F(\Sigma)$.
}
\end{assumption}

This means that
$$
    \sw_{\bff, 1} \big|_{\bT_{\sff}} = \frac{1}{\text{$\fr_{\bff}$}} \su_1^{\bff} \big|_{\bT_{\sff}}  =  \frac{\sfa_{\bff}}{\fr_{\bff}} \su  \neq 0.
$$

\begin{definition}\rm{
Define the \emph{framing} of a flag $\bff \in F(\Sigma)$ to be
$$
    \sff_{\bff} \coloneqq \frac{\su_2^{\bff}}{\su_1^{\bff}} \bigg|_{\bT_{\sff}} = \frac{\sfb_{\bff}}{\sfa_{\bff}} \quad \in \bQ.
$$
}
\end{definition}

\begin{definition}\rm{
We define the \emph{sign function}
$$
    \sgn\colon F(\Sigma) \longrightarrow \{\pm 1\}
$$
by the sign of the nonzero integer $\sfa_{\bff}$.
}
\end{definition}

By construction, we have $\sff_{\bff_0} = \sff$ and $\sgn(\bff_0) = +1$. For $\tau \in \Sigma(2)$, let
$$
    \sfa_\tau \coloneqq \sgn(\bff) \sfa_{\bff} = |\sfa_{\bff}|
$$
for some choice of $\bff = (\tau, \sigma)$. For $\tau \not \in \Sigma(2)_c$, the choice of $\bff$ is unique. For $\tau \in \Sigma(2)_c$, Equation \eqref{eqn:InnerCoeffRelation} implies the definition of $\sfa_\tau$ does not depend on the choice of $\bff$ between the two possibilities. We set $\bff_\pm$ to be the flag whose sign is $\pm 1$.


See Figure \ref{fig:Framing} for choices of framings in the examples of $\bC^3$ and the resolved conifold.

\begin{figure}[htb]
$$
	\begin{tikzpicture}[scale=0.8]
		\draw (0,0) -- (1,0) -- (2,0.5);
        \draw (0,-1) -- (1,0);
        \node[above] at (0.5,0){$\tau_0$};
        \node[left] at (0,0){$1$};
        \node[left] at (0,-1){$-2$};
        \node[right] at (2,0.5){$-\frac{1}{2}$}; 
        \node at (1,0){$\bullet$};

        \draw (4,0) -- (5,0) -- (6,0.33);
        \draw (4,-0.5) -- (5,0);
        \node[above] at (4.5,0){$\tau_0$};
        \node[left] at (4,0){$2$};
        \node[left,below] at (4,-0.5){$-\frac{3}{2}$};
        \node[right] at (6,0.33){$-\frac{1}{3}$}; 
        \node at (5,0){$\bullet$};

        \draw (9,-0.5) -- (10,-0.5) -- (12,0.5) -- (13,0.5);
        \draw (9,-1.5) -- (10,-0.5);
        \draw (12,0.5) -- (13,1.5);
        \node at (10,-0.5){$\bullet$};
        \node at (12,0.5){$\bullet$};
        \node[above] at (9.5,-0.5){$\tau_0$};
        \node[left] at (9,-0.5){$1$};
        \node[left] at (9,-1.5){$-2$};
        \node[right] at (13,0.5){$1$};
        \node[right] at (13,1.5){$-2$};
        
        \node[below] at (11,0){$-\frac{1}{2}$}; 

    \end{tikzpicture}
$$
\caption{Two choices of framings for $\bC^3$ and a choice of framing for the resolved conifold. In each example, the framing for a 2-cone is marked on the corresponding edge in the toric graph, and the reference cone $\tau_0$ is taken to be the top-left one. The quantities $\sfa_\tau$ can be obtained from the denominators of the framings. Half edges pointing to the left (resp. right) have positive (resp. negative) sign.}
\label{fig:Framing}
\end{figure}


\section{Open Gromov-Witten theory}\label{sect:GW}
In this section, we introduce the background of equivariant descendant and open Gromov-Witten theory. In particular, we give a graph sum formula for the all-genus open Gromov-Witten potentials (Theorem \ref{thm:AmodelOpenGraph}).

\subsection{A-model moduli parameters}\label{sect:AmodelParameter}
We start by defining the moduli parameters for the open-closed Gromov-Witten theory of $\cX$. For an effective curve class $\beta \in E(\cX)$, let
$$
    Q^\beta = e^{-\int_\beta \omega}
$$
be the corresponding Novikov variable with respect to the symplectic form $\omega = \omega(\zeta)$ on $\cX$. Let $\btau' \in H^2_{\bT'}(\cX; \bC)$ be a parameter that lies in the span of $\Dbar_4^{\bT'}, \dots, \Dbar_{3+\fp'}^{\bT'}$. We introduce the variable
$$
    \tQ^\beta = e^{\int_\beta \btau'} Q^\beta.
$$

Let $\bfeta = (\tau, \eta) \in J_\Sigma$ and $\nabla$ be a flat $U(1)$-connection on $\cL_\tau$ of type $\eta$. For $\bff = (\tau, \sigma) \in F(\Sigma)$, we introduce the open moduli parameter
$$
    \tX_{\bff, \eta} = e^{\int_{\alpha_{\bff}} (\btau'-\omega)} \Hol_{\nabla}(\partial \alpha_{\bff}).
$$
When $\cL_\tau$ is outer, we set
$$
    \tX_{\bfeta} \coloneqq \tX_{\bff, \eta}.
$$
When $\cL_\tau$ is inner, we set
$$
    \tX_{\bfeta} \coloneqq \tX_{\bff_+, \eta}.
$$
We have
$$
    \tX_{\bff_+, \eta}\tX_{\bff_-, \eta} = e^{\int_{l_\tau}\btau'}Q^{[l_\tau]} = \tQ^{[l_\tau]}
$$
and thus
$$
    \tX_{\bff_-, \eta} = \tQ^{[l_\tau]} \tX_{\bfeta}^{-1}.
$$

\subsection{Equivariant descendant Gromov-Witten theory}
In this subsection, we review the equivariant descendant Gromov-Witten theory of $\cX$.

\subsubsection{Equivariant Gromov-Witten invariants}
Given $g, n \in \bZ_{\ge 0}$ and $\beta \in E(\cX)$, let $\Mbar_{g,n}(\cX, \beta)$ be the moduli space of genus-$g$, $n$-pointed, degree-$\beta$ twisted stable maps to $\cX$. For $i = 1, \dots, n$, let $\ev_i\colon \Mbar_{g,n}(\cX,\beta)\to \cI\cX$ be the evaluation map
at the $i$-th marked point. The $\bT'$-action on $\cX$ induces
$\bT'$-actions on the moduli space $\Mbar_{g,n}(\cX,\beta)$ and on the inertia stack $\cI\cX$, and
the evaluation map $\ev_i$ is $\bT'$-equivariant. Similarly, we can define the moduli space $\Mbar_{g,n}(X,\beta)$.

For $i=1,\dots,n$, let $\bL_i$ be the $i$-th tautological line bundle over $\Mbar_{g,n}(X, \beta)$ formed
by the cotangent line at the $i$-th marked point. Define the $i$-th descendant class $\psi_i$ as
$$
\psi_i \coloneqq c_1(\bL_i)\in H^2(\Mbar_{g,n}(X, \beta);\bQ).
$$
The $\bT'$-action on $X$ induces a $\bT'$-action on
$\Mbar_{g,n}(X, \beta)$, and we choose a $\bT'$-equivariant
lift $\psi_i^{\bT'}\in H^2_{\bT'}(\Mbar_{g.n}(X, \beta);\bQ)$
of $\psi_i$. Consider the map $p\colon \Mbar_{g,n}(\cX, \beta)\to \Mbar_{g,n}(X, \beta)$ induced by $\cX\to X$, which is $\bT'$-equivariant. For $i=1,\dots,n$, let
$$
\hat{\psi}_i \coloneqq p^*\psi_i \in H^2(\Mbar_{g,n}(\cX, \beta);\bQ), \quad 
\hat{\psi}_i^{\bT'} \coloneqq p^*\psi_i^{\bT'} \in H^2_{\bT'}(\Mbar_{g,n}(\cX, \beta);\bQ).
$$

Let $\gamma_1,\dots, \gamma_n\in H_{\bT'}^*(\cX,\bC)$ and $a_1,\dots,a_n\in \bZ_{\ge 0}$. We define the genus-$g$, degree-$\beta$, $\bT'$-equivariant \emph{descendant Gromov-Witten invariant}
\begin{align*}
	\langle \tau_{a_1}(\gamma_1), \dots, \tau_{a_n}(\gamma_n)\rangle^{\cX,\bT'}_{g,n,\beta} & = 
	\langle \gamma_1\hat{\psi}^{a_1}, \dots,\gamma_n\hat{\psi}^{a_n}\rangle^{\cX,\bT'}_{g,n,\beta}\\
	& \coloneqq \int_{[\Mbar_{g,n}(\cX, \beta)^{\bT'}]^{w,\bT'}} \frac{\iota^*\big(\prod_{i=1}^n \ev_i^*(\gamma_i)(\hat{\psi}_i^{\bT'})^{a_i}\big)}{e_{\bT'}(N^\vir)}
	\quad \in \ST= \bC(\su_1,\su_2)
\end{align*}
where $\Mbar_{g,n}(\cX, \beta)^{\bT'}$ is the $\bT'$-fixed locus, $e_{\bT'}(N^\vir)$ is the $\bT'$-equivariant Euler class of the virtual normal bundle of $\Mbar_{g,n}(\cX,\beta)^{\bT'}$ in $\Mbar_{g,n}(\cX,\beta)$, $[\Mbar_{g,n}(\cX, \beta)^{\bT'}]^{w,\bT'}$ is the weighted virtual fundamental class, and $\iota\colon  \Mbar_{g,n}(\cX, \beta)^{\bT'}\hookrightarrow \Mbar_{g,n}(\cX, \beta)$ is the inclusion map. The invariant
$$
\langle \gamma_1,\dots, \gamma_n \rangle_{g,n,\beta}^{\cX,\bT'}\coloneqq\langle \tau_0(\gamma_1), \dots, \tau_0(\gamma_n)\rangle^{\cX,\bT'}_{g,n,\beta}   
$$
is called the genus-$g$, degree-$\beta$ \emph{primary Gromov-Witten invariant}.
We adopt the following convention for unstable integrals:
\begin{equation}\label{eqn:Unstable}
	\left\langle \frac{b}{w-\hat{\psi}} \right\rangle^{\cX,\bT'}_{0,1,0} = w(\one,b)_{\cX,\bT'}, \quad 
    \left\langle a,\frac{b}{w-\hat{\psi}} \right\rangle^{\cX,\bT'}_{0,2,0} = (a,b)_{\cX,\bT'}, \quad
	\left\langle \frac{a}{w_1-\hat{\psi}},\frac{b}{w_2-\hat{\psi}} \right\rangle^{\cX,\bT'}_{0,2,0} = \frac{(a,b)_{\cX,\bT'}}{w_1 + w_2}.
\end{equation}

Define the Novikov ring
$$
	\nov\coloneqq\widehat{\bC[E(\cX)]}= \left\{ \sum_{\beta \in E(\cX)} c_\beta Q^\beta: c_\beta\in \bC \right\}.
$$
For $\gamma_1,\dots,\gamma_n\in H^*_{\CR,\bT'}(\cX)\otimes_\RT \bST$ and $a_1,\dots, a_n\in \bZ_{\ge 0}$, define the following generating function
$$
	\left\llangle  \tau_{a_1}(\gamma_1), \dots, \tau_{a_n}(\gamma_n) \right\rrangle^{\cX,\bT'}_{g,n} = 
	\left\llangle  \gamma_1\hat{\psi}^{a_1},  \dots, \gamma_n\hat{\psi}^{a_n} \right\rrangle^{\cX,\bT'}_{g,n}
	\coloneqq\sum_{m \ge 0} \sum_{\beta \in E(\cX)}\frac{Q^\beta}{m!} \left\langle
	\gamma_1\hat{\psi}^{a_1}, \dots, \gamma_n\hat{\psi}^{a_n}, t^m \right\rangle^{\cX,\bT'}_{g,n+m,\beta}
$$
where $t\in H^*_{\CR,\bT'}(\cX)\otimes_{R_\bT}\bST$. We adopt the convention \eqref{eqn:Unstable} in the correlators $\double{-}^{\cX,\bT'}_{0,1}$ and $\double{-}^{\cX,\bT'}_{0,2}$.
We choose an $\ST$-basis $\{ T_i: i=0,1,\dots,E-1\}$ of $H^*_{\CR,\bT'}(\cX;\ST)$ such that
$$
T_0= \text{$\one$},\qquad T_a=\bar{D}_{3+a}^{\bT'} \quad \text{for  $a=1,\dots,\fp'$},\qquad
T_a= \text{$\one_{b_{3+a}}$} \quad \textup{for $a=\fp'+1,\dots, \fp$}
$$
and that for $i=\fp+1,\dots,E-1$, $T_i$ is of the form $T_aT_b$ for some $a,b\in \{1,\dots, \fp\}$.
Write $t=\sum_{i=0}^{E-1}\tau_i T_i$, and let $\tau'=(\tau_1,\dots, \tau_{\fp'})$,
$\tau''=(\tau_0, \tau_{\fp'+1},\dots, \tau_{E-1})$.
By the divisor equation,
$$
	\left\llangle  \gamma_1\hat{\psi}^{a_1},  \dots, \gamma_n\hat{\psi}^{a_n} \right\rrangle^{\cX,\bT'}_{g,n}
	=\sum_{m=0}^\infty \sum_{\beta \in E(\cX)}\frac{\tQ^\beta}{m!} \left\langle
	\gamma_1\hat{\psi}^{a_1}, \dots, \gamma_n\hat{\psi}^{a_n}, (t'')^m \right\rangle^{\cX,\bT'}_{g,n+m,d} \quad \in \bSTQ
$$
where $\tQ^\beta = Q^\beta \exp(\sum_{a=1}^{\fp'}\tau_a \langle T_a, \beta \rangle)$ (as in Section \ref{sect:AmodelParameter}) and $t'' = \tau_0 T_0 + \sum_{i=\fp'+1}^{E-1}\tau_i T_i$. In particular, the restriction to $Q=1$ is well-defined.

\subsubsection{Quantum cohomology and canonical coordinates}\label{sec:QH}

For any  $a,b,c\in H_{\CR,\bT'}^*(\cX;\bST)$, define the quantum product $\star_t$ by
$$	
(a\star_t b,c)_{\cX,\bT'} = \left\llangle a,b,c \right\rrangle_{0,3}^{\cX,\bT'} \qquad \in \bSTQ.
$$
Let
$$
	\novT\coloneqq \bST\otimes_{\bC}\nov =\bST [\![ E(\cX)]\!].
$$
Then $H^*_{\CR,\bT'}(\cX;\novT)$ is a free $\novT$-module of rank $E$. 
Consider the formal scheme
$$
\hH \coloneqq\Spec(\novT[\![ t^{\bsi}:\bsi\in I_\Si ]\!]).
$$
The tangent sheaf $\cT_{\hH}$ is a sheaf of free $\cO_{\hH}$-modules of rank $E$.
Given an open set $U$ in $\hH$, we have
$$
\cT_{\hH}(U)  \cong \bigoplus_{\bsi\in I_\Si}\cO_{\hat{H}}(U) \frac{\partial}{\partial t^{\bsi}}.
$$
The quantum product and the $\bT'$-equivariant Poincar\'{e} pairing define the structure of a \emph{formal Frobenius manifold} on $\hH$ over $\Lambda_{\cX}^{\bT'}$:
$$
\frac{\partial}{\partial t^{\bsi}} \star_t \frac{\partial}{\partial t^{\bsi'}}
=\sum_{\brho\in I_\Si} \left\llangle \hat{\phi}_{\bsi},\hat{\phi}_{\bsi'},\hat{\phi}_{\brho} \right\rrangle_{0,3}^{\cX,\bT'}
\frac{\partial}{\partial t^{\brho}}
\in \Gamma(\hat{H}, \cT_{\hat{H}}), \qquad
\left( \frac{\partial}{\partial t^{\bsi}},\frac{\partial}{\partial t^{\bsi'}}\right)_{\cX,\bT'} =\delta_{\bsi,\bsi'}.
$$
The ring $ \big(\Gamma(\hat{H}, \cT_{\hat{H}}), \star_t \big)$ is called the \emph{equivariant big quantum cohomology ring} and is denoted by $QH^*_{\bT'}(\cX)$.

The semi-simplicity of the classical cohomology $H^*_{\CR, \bT'}(\cX;\bC)\otimes_\RT \bST$ implies the semi-simplicity of the quantum cohomology $QH^*_{\bT'}(\cX)$. In fact, there exists a canonical basis $\{\phi_{\bsi}(t)\}_{\bsi\in I_\Si}$ of $QH^*_{\bT'}(\cX)$ characterized by the property that for all $\bsi\in I_\Si$,
$$
\phi_{\bsi}(t)\longrightarrow  \phi_{\bsi},\quad\mathrm{when }\quad t,\tQ\to 0.
$$
We define $\{\phi^{\bsi}(t)\}_{\bsi\in I_\Si}$ to be the basis dual to $\{\phi_{\bsi}(t)\}_{\bsi\in I_\Si}$ with respect to the metric $(-,-)_{\cX,\bT'}$.
The canonical coordinates $\{ u^{\bsi}=u^{\bsi}(t):\bsi\in I_\Si\}$ on the formal Frobenius
manifold $\hat{H}$ are characterized by
$$
\frac{\partial}{\partial u^{\bsi}} = \phi_{\bsi}(t).
$$
The canonical coordinates are unique up to additive constants in $\novT$. We choose canonical coordinates
such that they vanish when $\tQ=0,\hat{t}^\bsi=0,\bsi\in I_\Si$.

We define $\Delta^{\bsi}(t)$ by the following equation
$$
(\phi_{\bsi}(t), \phi_{\bsi'}(t))_{\cX,\bT'} =\frac{\delta_{\bsi,\bsi'}}{\Delta^{\bsi}(t)}.
$$
The normalized canonical basis of $(\hat{H},\star_t)$ is defined as
$$
\{ \hat{\phi}_{\bsi}(t)\coloneqq \sqrt{\Delta^{\bsi}(t)}\phi_{\bsi}(t): \bsi\in I_\Si\}.
$$
We have
$$
\hat{\phi}_{\bsi}(t)\star_t \hat{\phi}_{\bsi'}(t) =\delta_{\bsi, \bsi'}\sqrt{\Delta^{\bsi}(t)}\hat{\phi}_{\bsi}(t),\qquad
(\hat{\phi}_{\bsi}(t), \hat{\phi}_{\bsi'}(t))_{\cX,\bT'}=\delta_{\bsi,\bsi'}.
$$
Define the transition matrix $\Psi=(\Psi_{\bsi'}^{\spa  \bsi})$ as 
$$
\hat{\phi}_{\bsi'}=\sum_{\bsi\in I_\Si} \Psi_{\bsi'}^{\spa \bsi} \hat{\phi}_\bsi(t).
$$

\subsubsection{A-model $R-$matrix}\label{sect:AmodelRmatrix}

We consider the Dubrovin connection $\nabla^z$, which is a family
of connections parameterized by $z\in \bC\cup \{\infty\}$, on the tangent bundle
$T_{\hat{H}}$ of the formal Frobenius manifold $\hat{H}$:
$$
\nabla^z_{\bsi}=\frac{\partial}{\partial t^{\bsi}} -\frac{1}{z} \hat{\phi}_{\bsi}\star_t.
$$
The commutativity (resp. associativity)
of $*_t$ implies that $\nabla^z$ is a torsion
free (resp. flat) connection on $T_{\hat{H}}$ for all $z$. The equation
\begin{equation}\label{eqn:qde}
	\nabla^z \mu=0
\end{equation}
for a section $\mu\in \Gamma(\hat{H},\cT_{\hat{H}})$ is called the $\bT'$-equivariant
{\em big quantum differential equation} (big QDE). Let
$$
\cT_{\hat{H}}^{f,z}\subset \cT_{\hat{H}}
$$
be the subsheaf of flat sections with respect to the connection $\nabla^z$.
For each $z$, $\cT_{\hat{H}}^{f,z}$ is a sheaf of
$\novT$-modules of rank $E$.

An element $L(z)\in \End(\cT_{\hat H})$ is called a {\em fundamental solution} to the $\bT'$-equivariant big QDE if
the $\cO_{\hat{H}}(\hat{H})$-linear map
$$
L(z)\colon  \Gamma(\hat{H},\cT_{\hat{H}}) \longrightarrow \Gamma(\hat{H},\cT_{\hat{H}})
$$
restricts to a $\novT$-linear isomorphism
$$
L(z)\colon  \Gamma(\hat{H},\cT_H^{f,\infty})=\bigoplus_{\bsi\in I_{\Si}} \novT \frac{\partial}{\partial t^{\bsi}}
\longrightarrow \Gamma(\hat{H},\cT_H^{f,z})
$$
between rank-$E$ free $\novT$-modules.

Let $U$ be the diagonal matrix whose diagonal entries are the canonical coordinates.
The results in \cite{Givental97, Givental98} and \cite{Zong15} imply the following statement.
\begin{theorem}\label{R-matrix}
	There exists a unique matrix power series $R(z)= \one + R_1z+R_2 z^2+\cdots$
	satisfying the following properties:
	\begin{itemize}
		\item The entries of each $R_k$ lie in $\bSTQ$.
		\item $\tS=\Psi R(z) e^{U/z}$  is a fundamental solution to the $\bT'$-equivariant
		big QDE \eqref{eqn:qde}.
		\item $R$ satisfies the unitary condition $R^T(-z)R(z)=\one$.
		\item For $(\sigma, \gamma), (\rho, \delta) \in I_\Sigma$, we have
		$$			
			\lim_{\tQ,\tau''\to 0} R_{\rho,\delta}^{\spa\si,\gamma}(z)
			= \frac{\delta_{\rho,\si}}{|G_\si|}\sum_{h\in G_\si}\chi_\delta(h) \chi_\gamma(h^{-1})
			\prod_{i=1}^3 \exp\left( \sum_{m=1}^\infty \frac{(-1)^m}{m(m+1)}B_{m+1}(c_i(h))
			\left(\frac{z}{\w_i(\si)}\right)^m \right)
		$$
		where $B_m(x)$ is the $m$-th \emph{Bernoulli polynomial} defined by the identity
		$
		\frac{te^{tx}}{e^t-1}=\sum_{m\ge 0}\frac{B_m(x)t^m}{m!}
		$.
	\end{itemize}
\end{theorem}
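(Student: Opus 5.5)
The proof follows the standard Givental route for semisimple Frobenius manifolds, adapted to the orbifold equivariant setting as in \cite{Zong15}. The argument has four steps.

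\textbf{Step 1: uniqueness.} Suppose $R$ and $R'$ both satisfy the listed properties. Then $\Psi R e^{U/z}$ and $\Psi R' e^{U/z}$ are two fundamental solutions, so they differ by right multiplication by a matrix $C(z)$ that is constant in $t$. Hence
\[
R' = R\, e^{U/z} C(z) e^{-U/z}.
\]
For $R'$ to be a power series in $z$ with distinct generic canonical coordinates, $C(z)$ must be diagonal: $C(z)=\exp(\sum_k a_k z^{2k-1})$ after imposing the unitary condition $R^T(-z)R(z)=\one$, which forces the diagonal ambiguity to be odd in $z$. The constants $a_k$ lie in $\bSt$ (independent of $t$ and $\tQ$). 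The prescribed limit at $\tQ,\tau''\to 0$ determines them. So uniqueness reduces to checking that the diagonal part of the stated limit is consistent, and then the limit fixes $R$.

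\textbf{Step 2: existence, formal recursion.} The flatness equation $\nabla^z(\Psi R e^{U/z})=0$ becomes, in canonical coordinates,
\[
z\, dR + [dU, R] + z\, \Psi^{-1} d\Psi\, R = 0 .
\]
Expand this order by order in $z$. The off-diagonal entries of $R_{k+1}$ are determined algebraically by $R_k$, because $[dU,-]$ is invertible off the diagonal by semisimplicity. The diagonal entries of $R_{k+1}$ are determined up to a $t$-independent constant by integration. Together with the unitarity argument of Givental \cite{Givental97,Givental98}, this yields a solution that is unique up to the diagonal constants of Step 1. The entries lie in $\bSTQ$ because $\Psi$ and $U$ do.

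\textbf{Step 3: fixing constants via the limit.} At $\tQ=0$, $\tau''=0$, the quantum product reduces to the classical Chen--Ruan product on the disjoint union of the affine pieces $[\bC^3/G_\sigma]$. The Frobenius structure splits accordingly, and $R$ becomes block diagonal in $\sigma$. On each block, the $R$-matrix of $[\bC^3/G_\sigma]$ is computed from the equivariant quantum Riemann--Roch (Tseng's orbifold QRR), or equivalently from the asymptotics of the $J$-function of $\cB G_\sigma$ twisted by the three weights $\sw_i(\sigma)$. In the $\one_h$-sector, Stirling's expansion of $\Gamma$-functions with shift $c_i(h)$ gives the factor
\[
\exp\!\Big(\sum_m \frac{(-1)^m}{m(m+1)} B_{m+1}(c_i(h)) \Big(\frac{z}{\sw_i(\sigma)}\Big)^m\Big).
\]
Conjugating by the character transform from $\{\bar{\one}_h\}$ to the canonical basis $\{\bar{\phi}_\gamma\}$ produces the stated formula $\frac{1}{|G_\sigma|}\sum_h \chi_\delta(h)\chi_\gamma(h^{-1})\cdots$. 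The diagonal constants are chosen to match this limit.

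\textbf{Step 4: legitimacy of the limit.} We need the limit to be well-defined and compatible with Step 2. The constants are $t$-independent, so it suffices to match a single point. Because the invariants lie in $\bSTQ$, setting $\tQ,\tau''=0$ is legitimate.

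\textbf{Main obstacle.} The hardest part is Step 3: verifying that the Bernoulli-polynomial formula is exactly the $R$-matrix of $[\bC^3/G_\sigma]$ in the normalized canonical frame. This requires carefully tracking the normalizations $\bar{\one}_h = \one_h/\prod \sw^{c_i(h)}$ and $\sqrt{\Delta^{\bsi}}$. Rather than recomputing, I would cite \cite{Zong15}, where this identification is carried out for GKM orbifolds.
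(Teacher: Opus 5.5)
Your outline matches the paper. The paper gives no independent proof: it invokes Givental's construction of the $R$-matrix for a semisimple Frobenius manifold, where $R$ is unique up to $t$-independent diagonal factors $\exp(\sum_k a_k z^{2k-1})$ that are fixed by the $\tQ,\tau''\to 0$ limit, together with Zong's orbifold quantum Riemann--Roch computation for $[\bC^3/G_\sigma]$, which produces the Bernoulli-polynomial formula.

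One small correction to Step 2: $U$ itself is not in $\bSTQ$, since it has terms linear in $\tau'$. The correct justification is that $\Psi$ and the coefficients of $dU$ lie in $\bSTQ$, and that $\partial_{\tau_a}\Psi$ has no $\tQ^0$ term; this is what keeps the integrated diagonal entries of each $R_k$ inside $\bSTQ$.
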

The matrix $R(z)$ in Theorem \ref{R-matrix} is called the {\em A-model $R$-matrix}.

\subsubsection{The $\cS$-operator}

For any $a,b\in H_{\CR,\bT'}^*(\cX;\bST)$, the \emph{$\cS$-operator} is defined as
$$
(a,\cS(b))_{\cX,\bT'} = \left\llangle a,\frac{b}{z-\hat{\psi}}\right\rrangle^{\cX,\bT'}_{0,2}.
$$
The $\cS$-operator, viewed as an element in $\End(\cT_{\hat{H}})$, is a fundamental solution to the $\bT'$-equivariant
big QDE \eqref{eqn:qde} \cite[Section 10.2]{CK99} \cite{Iritani09}. For later use we record some of its well-known properties; see e.g. \cite{Givental97, GT13}. First, it satisfies the unitary condition 
$$
\cS^T(-z)\cS(z) = \one.
$$
Moreover, for any basis $\{\phi_\alpha\}$ of $H^*_{\CR, \bT'}(\cX;\bC)$ and dual basis $\{\phi^\alpha\}$ under the $\bT'$-equivariant Poincar\'e pairing, we have the identity
\begin{equation}\label{eqn:SIdentity}
	\frac{1}{z_1 + z_2}\sum_{\alpha} \left\llangle \phi_\alpha,\frac{a}{z_1-\hat{\psi}}\right\rrangle^{\cX,\bT'}_{0,2} \left\llangle \phi^\alpha,\frac{b}{z_2-\hat{\psi}}\right\rrangle^{\cX,\bT'}_{0,2} = \left\llangle \frac{a}{z_1-\hat{\psi}}, \frac{b}{z_2-\hat{\psi}}\right\rrangle^{\cX,\bT'}_{0,2}.
\end{equation}

We introduce some additional notation. For $\bsi, \bsi'\in I_\Si$, define
$$
S^{\bsi'}_{\spa \bsi}(z) \coloneqq (\phi^{\bsi'}, \cS(\phi_{\bsi}))_{\cX,\bT'},\qquad S_{\bsi'}^{\spa \widehat{\bsi} }(z) \coloneqq (\phi_{\bsi'}, \cS(\hat{\phi}^{\bsi}))_{\cX,\bT'}.
$$
We define
$$
S^{\widehat{\underline{\bsi}}}_{\spa \widehat{\underline{\bsi'}} }(z)
\coloneqq (\hat{\phi}_{\bsi}(t), \cS(\hat{\phi}_{\bsi'}(t))).
$$
Then $(S^{ \widehat{\underline{\bsi}}  }_{\spa \widehat{\underline{\bsi'}} }(z))$ is the matrix of the $\cS$-operator with
respect to the normalized canonical basis  $\{ \hat{\phi}_{\bsi}(t): \bsi\in I_\Si\} $:
$$
\cS(\hat{\phi}_{\bsi'}(t))=\sum_{\bsi\in I_\Si} \hat{\phi}_{\bsi}(t)
S^{\widehat{\underline{\bsi}} }_{\spa \widehat{\underline{\bsi'}} }(z).
$$
Moreover, we define
$$
S^{\widehat{\underline{\bsi}}}_{\spa \bsi'}(z)
\coloneqq (\hat{\phi}_{\bsi}(t), \cS(\phi_{\bsi'})).
$$
Then $(S^{ \widehat{\underline{\bsi}}  }_{\spa \bsi'}(z))$ is the matrix of the $\cS$-operator with
respect to the  basis $\{\phi_{\bsi}:\bsi\in I_\Si\}$ and
$\{ \hat{\phi}_{\bsi}(t): \bsi\in I_\Si\} $:
$$
\cS(\phi_{\bsi'})=\sum_{\bsi\in I_\Si} \hat{\phi}_{\bsi}(t)
S^{\widehat{\underline{\bsi}} }_{\spa \bsi'}(z).
$$

\subsection{A-model descendant graph sum}\label{sect:AmodelDescendantGraph}
In this subsection, we introduce the graph sum formula for the generating functions of descendant Gromov-Witten invariants.

Given a connected graph $\Ga$, we introduce the following notation:
\begin{itemize}
	\item $V(\Ga)$ is the set of vertices.
	\item $E(\Ga)$ is the set of edges.
	\item $H(\Ga)$ is the set of half edges.
	\item $L^o(\Ga)$ is the set of ordinary leaves. The ordinary
	leaves are ordered: $L^o(\Ga)=\{l_1,\dots,l_n\}$ where
	$n$ is the number of ordinary leaves.
	\item $L^1(\Ga)$ is the set of dilaton leaves. The dilaton leaves are unordered.
\end{itemize}
With the above notation, we introduce the following labels:
\begin{itemize}
	\item (genus) $g\colon  V(\Ga)\to \bZ_{\ge 0}$.
	\item (marking) $\bsi\colon  V(\Ga) \to I_\Si$. This induces
	$\bsi \colon L(\Ga)=L^o(\Ga)\cup L^1(\Ga)\to I_\Si$, as follows:
	if $l\in L(\Ga)$ is a leaf attached to a vertex $v\in V(\Ga)$,
	define $\bsi(l)=\bsi(v)$.
	\item (height) $k\colon  H(\Ga)\to \bZ_{\ge 0}$.
\end{itemize}

Given an edge $e$, let $h_1(e),h_2(e)$ be the two half edges associated to $e$. The order of the two half edges does not affect the graph sum formula in this paper. Given a vertex $v\in V(\Ga)$, let $H(v)$ denote the set of half edges
emanating from $v$. The valency of the vertex $v$ is defined to be the cardinality of the set $H(v)$: $\val(v)=|H(v)|$.
A labeled graph $\vGa=(\Ga,g,\bsi,k)$ is {\em stable} if
$$
2g(v)-2 + \val(v) >0
$$
for all $v\in V(\Ga)$. We define the \emph{genus} of a stable labeled graph
$\vGa$ as
$$
g(\vGa)\coloneqq \sum_{v\in V(\Ga)}g(v)  + |E(\Ga)|- |V(\Ga)|  +1
=\sum_{v\in V(\Ga)} (g(v)-1) + \left(\sum_{e\in E(\Gamma)} 1\right) +1.
$$
Let $\bGa(\cX)$ denote the set of all stable labeled graphs
$\vGa=(\Gamma,g,\bsi,k)$. For $g, n \in \bZ_{\ge 0}$ define
$$
\bGa_{g,n}(\cX)=\{ \vGa=(\Gamma,g,\bsi,k)\in \bGa(\cX): g(\vGa)=g, |L^o(\Ga)|=n\}.
$$

For $i = 1, \dots, n$, introduce formal variables
$$
    \bu_i =\bu_i(z)= \sum_{a \in \bZ_{\ge 0}} (u_i)_a z^a
$$
where $(u_i)_k \in H^*_{\CR,\bT'}(\cX;\bC)\otimes_\RT\bST$. We assign weights to leaves, edges, and vertices of a labeled graph $\vGa\in \bGa(\cX)$ as follows:
\begin{itemize}
	\item {\em Ordinary leaves.}
	To each ordinary leaf $l_i \in L^o(\Ga)$ with  $\bsi(l_i)= \bsi\in I_\Si$
	and  $k(l)= k \ge 0$, we assign the following descendant  weight:
		$$
		(\cL^{\bu})^{\bsi}_k(l_i) \coloneqq [z^k] \left(\sum_{\brho, \bsi' \in I_\Si}
		\left(\frac{\bu_i^{\bsi'}(z)}{\sqrt{\Delta^{\bsi'}(t)} }
		S^{\widehat{\underline{\brho}} }_{\spa
			\widehat{\underline{\bsi'}}}(z)\right)_+ R(-z)_{\brho}^{\spa \bsi} \right),
		$$
		where $(\cdot)_+$ means taking the nonnegative powers of $z$.
		
		\item {\em Dilaton leaves.} To each dilaton leaf $l \in L^1(\Ga)$ with $\bsi(l)=\bsi
		\in I_\Si$
		and $k(l)=k \ge 2$, we assign
		$$
		(\cL^1)^{\bsi}_k \coloneqq [z^{k-1}]\left(-\sum_{\bsi'\in I_\Si}
		\frac{1}{\sqrt{\Delta^{\bsi'}(t)}}
		R_{\bsi'}^{\spa \bsi}(-z) \right).
		$$
		
		\item {\em Edges.} To an edge connecting a vertex marked by $\bsi\in I_\Si$ and a vertex
		marked by $\bsi'\in I_\Si$, and with heights $k$ and $l$ at the corresponding half-edges, we assign
		$$
		\cE^{\bsi,\bsi'}_{k,l} \coloneqq [z^k w^l]
		\left(\frac{1}{z+w} \left(\delta_{\bsi\bsi'}-\sum_{\brho\in I_\Si}
		R_{\brho}^{\spa \bsi}(-z) R_{\brho}^{\spa \bsi'}(-w)\right) \right).
		$$
		\item {\em Vertices.} To a vertex $v$ with genus $g(v)=g\in \bZ_{\ge 0}$ and with
		marking $\bsi(v)=\bsi$, with $n_1$ ordinary
		leaves and half-edges attached to it with heights $k_1, ..., k_{n_1} \in \bZ_{\ge 0}$ and $n_2$
		dilaton leaves with heights $k_{n_1+1}, \dots, k_{n_1+n_2}\in \bZ_{\ge 0}$, we assign
		$$
		\left(\sqrt{\Delta^{\bsi}(t)}\right)^{2g(v)-2+\val(v)}\langle  \tau_{k_1}\cdots\tau_{k_{n_1+n_2}}\rangle_g
		$$
		where $\langle  \tau_{k_1}\cdots\tau_{k_{n_1+n_2}}\rangle_{g}\coloneqq \displaystyle \int_{\Mbar_{g,n_1+n_2}}\psi_1^{k_1} \cdots \psi_{n_1+n_2}^{k_{n_1+n_2}}$.
	\end{itemize}
	
	We define the \emph{A-model weight} of a labeled graph $\vGa\in \bGa(\cX)$ to be
	\begin{align*}
		w_A^{\bu}(\vGa) \coloneqq & \prod_{v\in V(\Ga)} \left(\sqrt{\Delta^{\bsi(v)}(t)}\right)^{2g(v)-2+\val(v)} \left\langle \prod_{h\in H(v)} \tau_{k(h)} \right\rangle_{g(v)}
		\prod_{e\in E(\Ga)} \cE^{\bsi(v_1(e)),\bsi(v_2(e))}_{k(h_1(e)),k(h_2(e))}\\
		& \cdot \prod_{l\in L^1(\Ga)}(\cL^1)^{\bsi(l)}_{k(l)}\prod_{j=1}^n(\cL^{\bu})^{\bsi(l_i)}_{k(l_i)}(l_i).
	\end{align*}

Consider the generating function
$$
	\double{ \bu_1,\dots, \bu_n }_{g,n}^{\cX,\bT'} =
	\double{ \bu_1(\hat{\psi}),\dots, \bu_n(\hat{\psi}) }_{g,n}^{\cX,\bT'}
	=\sum_{a_i \in \bZ_{\ge 0}}
	\double{ (u_1)_{a_1}\hat{\psi}^{a_1}, \dots, (u_n)_{a_n}\hat{\psi}^{a_n} }_{g,n}^{\cX,\bT'}.
$$
We have the following graph sum formula.

\begin{theorem}[{\cite{Zong15}}]\label{thm:Zong}
	For $2g-2+n>0$, we have
	$$
	\llangle \bu_1,\dots, \bu_n\rrangle_{g,n}^{\cX,\bT'}=\sum_{\vGa\in \bGa_{g,n}(\cX)}\frac{w_A^{\bu}(\vGa)}{|\Aut(\vGa)|}.
	$$
\end{theorem}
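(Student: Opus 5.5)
This is the Givental--Teleman graph sum for the equivariant descendant theory of the GKM orbifold $\cX$, and I would prove it by quantization. The $\bT'$-equivariant quantum cohomology $QH^*_{\bT'}(\cX)$ is semisimple, with canonical basis $\{\phi_{\bsi}(t)\}_{\bsi\in I_\Si}$. So the total descendant potential $\cD_{\cX}$ should be obtained from a product of $|I_\Si|$ copies of the Witten--Kontsevich tau function. To get there, apply the quantized operators $\hat{\cS}^{-1}$, $\hat\Psi$, $\hat R$ and $e^{\widehat{U/z}}$, where $R$ is the unique $R$-matrix of Theorem \ref{R-matrix}. The graph sum is then what Wick's formula gives once these quadratic-Hamiltonian operators are expanded.

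First I would prove the quantization formula in the form
$$\cD_{\cX}=e^{F^1}\,\hat{\cS}^{-1}\,\hat\Psi\,\hat R\,e^{\widehat{U/z}}\prod_{\bsi}\cD_{\mathrm{pt}}(\Delta^{\bsi}).$$
The route is virtual localization. The $\bT'$-fixed loci of $\Mbar_{g,n}(\cX,\beta)$ are given by decorated graphs. The vertices are labeled by fixed points $\fp_\sigma$ together with twisted sectors, and the edges by orbifold legs $\fl_\tau$. Each vertex contributes Hodge integrals on $\Mbar_{g,n}(\cB G_\sigma)$. Following Givental's argument for toric manifolds, one shows that both sides satisfy the same fixed-point recursion and the same genus-0 data. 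An alternative is to use Teleman's classification: the equivariant theory is a semisimple CohFT whose $R$-matrix is pinned down by the $z\to 0$ behavior and the degree-zero limit in Theorem \ref{R-matrix}. The vertex Hodge integrals over $\cB G_\sigma$ are then handled by the orbifold Mumford relation (Tseng's orbifold quantum Riemann--Roch). This produces exactly the Bernoulli-polynomial exponent with $c_i(h)$ and fixes the $R$-matrix uniquely.

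Next I would expand the formula. The operator $\hat R$ yields the edge weights $\cE$ as the propagator $\frac{1}{z+w}(\one-R^T(-z)R(-w))$, with unitarity making this polynomial. It also yields the dilaton leaves from the shift $z(\one - R(-z))\one$. The operator $\hat{\cS}^{-1}$ combined with $\hat\Psi$ produces the ordinary leaf weights $(\bu S)_+R(-z)$. The factors $\sqrt{\Delta^{\bsi}}^{\,2g-2+\val}$ come from the rescaled KdV tau function at each vertex.

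The main obstacle is the first step, namely proving that the localization contributions at orbifold fixed points are correctly matched by $\hat R$. I would try to establish this through the uniqueness of the $R$-matrix. The argument is to check that the localization graph sum satisfies the same quadratic loop equations and the same dimension constraints as the quantized potential, and then to compare the two in the limit $\tQ,\tau''\to0$.
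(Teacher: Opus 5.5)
The paper does not reprove this statement; it imports it from \cite{Zong15}. Your outline is essentially the strategy of that reference: virtual localization on $\Mbar_{g,n}(\cX,\beta)$, orbifold quantum Riemann--Roch for the vertex Hodge integrals over $\Mbar_{g,n}(\cB G_\sigma)$ (producing the Bernoulli degree-zero limit in Theorem \ref{R-matrix}), identification of the resulting $R$ by uniqueness, and Wick expansion of $\hat{\cS}^{-1}\hat\Psi\hat R e^{\widehat{U/z}}$ into leaves, edges and vertices.

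The one correction concerns your last paragraph. The $\bT'$-equivariant theory is not conformal, so there are no dimension constraints to exploit, and no loop equations are needed. What pins down the $R$ coming from localization is exactly the uniqueness in Theorem \ref{R-matrix}: the fundamental-solution property and unitarity, together with the $\tQ,\tau''\to 0$ limit.
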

Define the formal variables $\bar{\bu}_i^{\bsi}(z)$ as
$$
\sum_{ \bsi \in I_\Si}\bar{\bu}_i^{\bsi}(z)\phi_{\bsi}(0)=\sum_{ \bsi \in I_\Si}\bu_i^{\bsi}(z)\phi_{\bsi}(\btau).
$$
Then in the above theorem, we may rewrite the A-model ordinary leaf as
\begin{equation}\label{eqn:AmodelOrdinaryLeaf}
	(\cL^\bu)^{\bsi}_k(l_i) = [z^k] \sum_{\brho, \bsi' \in I_\Si}
	\left( \bar{\bu}_i^{\bsi'}(z)
	S^{\widehat{\underline{\brho}} }_{\spa \bsi'}(z)\right)_+ 
	R(-z)_{\brho}^{\spa \bsi}.
\end{equation}

\subsection{Open Gromov-Witten theory}
In this subsection, we consider the open Gromov-Witten theory of $(\cX, \cL, \sff)$. We first set up some notation.

\subsubsection{Conventions for roots}
For $m \in \bZ_{\neq 0}$, we use the following notation for the $m$-th roots:
$$
    \omega_m \coloneqq \exp\left(\frac{2\pi\sqrt{-1}}{m}\right), \qquad \xi_m \coloneqq \exp\left(-\frac{\pi\sqrt{-1}}{m}\right).
$$
When $m > 0$, consider the group $\bmu_m \subset \bC^*$ of $m$-th roots of unity. For $\eta \in \bmu_m^*$, define a corresponding element $\etabar \in \{0, \dots, m-1\}$ by
$$
    \chi_\eta(\omega_m^k) = \exp\left(\frac{2\pi\sqrt{-1}\etabar k}{m}\right)
$$
where $\chi_\eta\colon  \bmu_m \to \bC^*$ is the character corresponding to $\eta$.

\subsubsection{Twisting factors}\label{sect:TwistingFactor}
Let $\bff = (\tau, \sigma) \in F(\Sigma)$. For $\lambda \in G_\tau$, let $\lambdabar \in \{0, \dots, \fm_\tau-1\}$ be such that $\lambda$ is identified with $\omega_{\fm_\tau}^{\lambdabar}$ under the isomorphism
$$
    \chi_{(\tau_3^{\text{$\bff$}}, \sigma)} \colon G_\tau \longrightarrow \bmu_{\fm_\tau} \subset \bC^*
$$
provided by the $G_\sigma$-representation on the tangent line of $\fl_{\tau_3^{\bff}}$ at $\fp_\sigma$. To be more explicit, we choose an isomorphism $\pi_1(\cL_\tau) \cong \bZ \times G_\tau$ such that the homomorphism $h\colon  \pi_1(\cL_\tau) \to G_\sigma = N/N_\sigma$ is given by
$$
    h(d, \lambda) = (d e_1^{\bff} - \lambdabar e_2^{\bff}) + N_\sigma.
$$

\begin{remark}\label{rem:FLTLambdaConvention}\rm{
The choice above is different from that in \cite[Sections 3.8, 4.4]{FLT22} when $\sff_{\bff} \in \bZ$, under which
$$
    (d, \lambda) \longmapsto (d (e_1^{\bff} + \sff_{\bff} e_2^{\bff}) - \lambdabar e_2^{\bff}) + N_\sigma.
$$
We adopt the above modification of this convention to accommodate the general case $\sff_{\bff} \in \bQ$.
}
\end{remark}

Recall that $\{x\}$ denotes the fractional part of $x \in \bR$.
Let $\tau \in \Sigma(2)$, and $\eta \in G_\tau^*$, $d \in \bZ$, $\lambda \in G_\tau$. When $\cL_\tau$ is outer , we define the \emph{twisting factor}
$$
    \eps_\eta^{\bff}(d, \lambda) \coloneqq \xi_{\sfa_{\bff}}^{\sfa_{\bff}\{-\frac{d}{\sfa_{\bff}}\}} (\xi_{\sfa_{\bff}\fm_\tau}\omega_{\sfa_{\bff}\fm_\tau}^{\etabar})^{\sfa_{\bff}\fm_\tau \{- \frac{d\sff_{\bff}}{\fm_\tau} - \frac{\lambdabar}{\fm_\tau}\}}.
$$
Similarly, when $\cL_\tau$ is inner, we define
$$
    \eps_\eta^{\bff_+}(d, \lambda) \coloneqq \xi_{\sfa_{\bff_+}}^{\sfa_{\bff_+}\{-\frac{d}{\sfa_{\bff_+}}\}} (\xi_{\sfa_{\bff_+}\fm_\tau}\omega_{\sfa_{\bff_+}\fm_\tau}^{\etabar})^{\sfa_{\bff_+}\fm_\tau \{- \frac{d\sff_{\bff_+}}{\fm_\tau} - \frac{\lambdabar}{\fm_\tau}\}}
$$
and
$$
    \eps_\eta^{\bff_-}(d, \lambda) \coloneqq \xi_{\sfa_{\bff_-}}^{\sfa_{\bff_-}\{-\frac{d}{\sfa_{\bff_-}}\}} (\xi_{\sfa_{\bff_-}\fm_\tau}\omega_{\sfa_{\bff_-}\fm_\tau}^{\etabar})^{-\sfa_{\bff_-}\fm_\tau \{- \frac{d\sff_{\bff_-}}{\fm_\tau} - \frac{\lambdabar}{\fm_\tau}\}},
$$
where note the extra sign in $\eps_\eta^{\bff_-}$. 

\subsubsection{Disk factors}
Let $\bff = (\tau, \sigma) \in F(\Sigma)$. Let $d \in \bZ_{>0}$ and $\lambda \in G_\tau$. For $j = 1, 2, 3$, let
$$
    w_j^{\bff} \coloneqq \frac{\sw_{\bff, j}}{\su_1^{\bff}}\bigg|_{\bT_{\sff}},
$$
i.e.
$$
    w_1^{\bff} = \frac{1}{\fr_{\bff}}, \qquad  w_2^{\bff} = \frac{\fs_{\bff} + \fr_{\bff}\sff_{\bff}}{\fr_{\bff}\fm_\tau}, \qquad w_3^{\bff} = - \frac{\fs_{\bff} + \fm_\tau + \fr_{\bff}\sff_{\bff}}{\fr_{\bff}\fm_\tau}.
$$
Let $\epsilon_1^{\bff} = \inner{d w_1^{\bff}}$, $\epsilon_2^{\bff}$, $\epsilon_3^{\bff} \in \bQ \cap [0,1)$ be such that $h(d, \lambda) \in G_\sigma$ acts on the tangent spaces of the three $\bT'$-invariant lines at $\fp_\sigma$ with weights $e^{2\pi\sqrt{-1}\epsilon_i^{\bff}}$ respectively. 

The \emph{disk factor} corresponding to the data $\bff, d, \lambda$ is defined by the following formula as in \cite[Section 3.11]{FLT22}, which is based on \cite{Ross14}:
$$    
    D_{\bff, d, \lambda}  = (-1)^{\floor{d w_3^{\bff}-\epsilon_3^{\bff}}+ \ceil{\frac{d}{\sfa_{\bff}}}}\left(\frac{\su_1^{\bff}}{d}\right)^{\age(h(d,\lambda))} \frac{\fr_{\bff}}{d \floor{d w_1^{\bff}}!} \prod_{a = 1}^{\floor{d w_1^{\bff}}+\age(h(d,\lambda))-1} \left(\frac{d\sw_{\bff, 2}}{\su_1^{\bff}} + a - \epsilon_2^{\bff} \right).
$$

\begin{remark} \rm{
We note that the sign convention above differs from that in \cite{FLT22} but agrees with that in \cite{FL13}. Under this sign convention, in the case of a smooth toric Calabi-Yau 3-fold with a single outer brane, the integrality of open BPS invariants holds \cite{Yu24}.
}\end{remark}

\subsubsection{Open Gromov-Witten invariants}\label{sect:OpenGWInv}
Let $g \in \bZ_{\ge 0}$ and $n \in \bZ_{> 0}$. Let $\bfeta_1, \dots, \bfeta_n \in J_\Sigma$. For $i = 1, \dots, n$, take $\mu_i \in \bZ_{\neq 0}$ and $\lambda_i \in G_{\tau_i}$. We require that $\mu_i >0$ if $\cL_{\tau_i}$ is outer.

To simplify notation, for the rest of Section \ref{sect:GW}, we adopt the following practice that unifies the outer and inner cases:
\begin{itemize}
    \item When $\cL_{\tau_i}$ is outer, $\bff_{i,+} = (\tau_i, \sigma_{i,+})$ denotes the unique choice of flag (i.e. $\bff_i = (\tau_i, \sigma_i)$), and $\bff_{i, -}$ and $\sigma_{i, -}$ have no meaning.
    
    \item In the ``$\pm$'' we take ``$+$'' when $\mu_i > 0$ and ``$-$'' when $\mu_i < 0$. In particular, we only have the ``$+$'' case when $\cL_{\tau_i}$ is outer.
\end{itemize}
The data above determines an effective class
$$
    \beta + \sum_{i = 1}^n |\mu_i| [\alpha_{\bff_{i, \pm}}] \qquad \in E(\cX, \cL).
$$

Let $\btau \in H^2_{\CR, \bT'}(\cX; \bC)$ be a parameter in the small phase space, decomposed as
\begin{equation}\label{eqn:btauDecompose}
   \btau = \btau_0 + \btau' + (\btau'' - \btau_0)
\end{equation}
with $\btau_0 \in H^2_{\bT'}(\pt; \bC)$, $\btau' \in H^2_{\bT'}(\cX; \bC)$ lies in the span of $\Dbar_4^{\bT'}, \dots, \Dbar_{3+\fp'}^{\bT'}$, and $\btau'' - \btau_0$ is supported on the twisted sectors.
For $\beta \in E(\cX)$ and $l \in \bZ_{\ge 0}$, consider the open Gromov-Witten invariant
$$
    \inner{(\btau)^l}_{g, \beta, (\mu_1, \lambda_1), \dots, (\mu_n, \lambda_n)} \in \bC
$$
where there are $l$ interior insertions of the class $\btau$. It is defined in \cite[Section 3]{FLT22} via $\bT'_{\bR}$-localization on the moduli space of open stable maps and weight restriction to $\bT_{\sff}$. It is also computed by \cite[Section 3]{FLT22} via localization computation as follows:
\begin{equation}\label{eqn:OGWLocalize}
    \inner{(\btau)^l}_{g, \beta, (\mu_1, \lambda_1), \dots, (\mu_n, \lambda_n)} = \prod_{i = 1}^n D_{\bff_{i, \pm}, |\mu_i|, \lambda_i} \int_{[\text{$\Mbar_{g, n + l}(\cX, \beta)$}]^\vir} \frac{\prod_{i = 1}^n \ev_i^* (\iota_{\sigma_{i, \pm}, *} \one_{h_\pm(|\mu_i|, \lambda_i)^{-1}}) \prod_{i = 1}^l \ev_{n + i}^*(\btau)}{\prod_{i = 1}^n  \left(\frac{\su_1^{\bff_{i, \pm}}}{|\mu_i|} - \hat{\psi} \right) \frac{\su_1^{\bff_{i, \pm}}}{|\mu_i|} } \bigg|_{\bT_{\sff}}.
\end{equation}
Here, we again adopt the convention \eqref{eqn:Unstable}; compare \cite[Proposition 5.1]{flz2020affine}.


\subsection{A-model open potentials}\label{sect:AmodelOpen}
From now on, we adhere to the following convention.

\begin{convention}\rm{
In the correlators $\double{-}^{\cX,\bT'}_{g,n}$ for all $g, n$, we take the specialization
$$
    t = \btau, \qquad Q = 1.
$$
}
\end{convention}

Let $g \in \bZ_{\ge 0}$, $n \in \bZ_{> 0}$, and $\bfeta_1, \dots, \bfeta_n \in J_\Sigma$. For $i = 1, \dots, n$, let $\tX_i$ denote a copy of the open moduli variable $\tX_{\bfeta_i}$ (defined in Section \ref{sect:AmodelParameter}). We define the generating function
\begin{equation}\label{eqn:AmodelFgn}
    \begin{aligned}
        F_{g,n}(\btau; \tX_1, \dots, \tX_n) \coloneqq & \sum_{\beta \in E(\cX)} \sum_{\mu_i \in \bZ_{\neq 0}} \sum_{\lambda_i \in G_{\tau_i}} \sum_{l \in \bZ_{\ge 0}} \frac{\inner{(\btau)^l}_{g, \beta, (\mu_1, \lambda_1), \dots, (\mu_n, \lambda_n)}}{l!} \\
        & \prod_{\mu_i > 0} (\eps_{\eta_i}^{\bff_{i,+}}(\mu_i, \lambda_i) \tX_i^{\mu_i})
        \prod_{\mu_i < 0} (\eps_{\eta_i}^{\bff_{i,-}}(-\mu_i, \lambda_i)  e^{-\mu_i\int_{l_{\tau_i}}\btau'} \tX_i^{\mu_i})  
    \end{aligned}  
\end{equation}
which is a formal Laurent series in $\tX_1, \dots, \tX_n$ (whose coefficients are $\bC$-valued formal functions in $\btau$) where in each term the power of each $\tX_i$ is nonzero. If $\cL_{\tau_i}$ is outer, the power of $\tX_i$ is always positive. By the localization computation \eqref{eqn:OGWLocalize}, we have
$$
    \begin{aligned}
        F_{g,n}(\btau; \tX_1, \dots, \tX_n) 
		= & \sum_{\mu_i \in \bZ_{\neq 0}} \sum_{\lambda_i \in G_{\tau_i}} \double{ \frac{\iota_{\sigma_{1, \pm}, *} \one_{h_\pm(|\mu_1|, \lambda_1)^{-1}}}{ \big( \su_1^{\bff_{1, \pm}}/|\mu_1| - \hat{\psi} \big) \su_1^{\bff_{1, \pm}}/|\mu_1|  }, \dots, \frac{\iota_{\sigma_{n, \pm}, *} \one_{h_\pm(|\mu_n|, \lambda_n)^{-1}}}{\big(\su_1^{\bff_{n, \pm}}/|\mu_n| - \hat{\psi} \big)  \su_1^{\bff_{n, \pm}}/|\mu_n|  } }_{g,n}^{\cX,\bT'}   \\
        & \prod_{i = 1}^n D_{\bff_{i, \pm}, |\mu_i|, \lambda_i} 
        \prod_{\mu_i > 0} (\eps_{\eta_i}^{\bff_{i,+}}(\mu_i, \lambda_i) \tX_i^{\mu_i})
        \prod_{\mu_i < 0} (\eps_{\eta_i}^{\bff_{i,-}}(-\mu_i, \lambda_i)  e^{-\mu_i\int_{l_{\tau_i}}\btau'} \tX_i^{\mu_i})  \bigg|_{\bT_{\sff}}   \\
        = & \sum_{h_i \in G_{\sigma_{i, \pm}}} \sum_{\substack{\pm \mu_i \in \bZ_{> 0}, \lambda_i \in G_{\tau_i} \\ h_\pm(|\mu_i|, \lambda_i) = h_i}} \sum_{a_i \in \bZ_{\ge 0}}
        \double{ \iota_{\sigma_{1, \pm}, *} \one_{h_1^{-1}} \hat{\psi}^{a_1} , \dots, \iota_{\sigma_{n, \pm}, *} \one_{h_n^{-1}}\hat{\psi}^{a_n} }_{g,n}^{\cX,\bT'}   \\
        & \prod_{i = 1}^n \left(\frac{\su_1^{\bff_{i, \pm}}}{|\mu_i|}\right)^{-a_i-2} D_{\bff_{i, \pm}, |\mu_i|, \lambda_i} 
        \prod_{\mu_i > 0} (\eps_{\eta_i}^{\bff_{i,+}}(\mu_i, \lambda_i) \tX_i^{\mu_i})
        \prod_{\mu_i < 0} (\eps_{\eta_i}^{\bff_{i,-}}(-\mu_i, \lambda_i)  e^{-\mu_i\int_{l_{\tau_i}}\btau'} \tX_i^{\mu_i})  \bigg|_{\bT_{\sff}}.         
    \end{aligned}  
$$

To connect to the descendant graph sum formula later, we change the insertions $\iota_{\sigma_{i, \pm}, *} \one_{h_i^{-1}}$ to the classical canonical basis elements. Recall that for any $\sigma, \sigma' \in \Sigma(3)$ and $h \in G_\sigma$, we have
$$
    \iota_{\sigma'}^* \iota_{\sigma, *} \one_{h^{-1}} = \delta_{\sigma, \sigma'} \prod_{j \in I'_{\sigma}} \sw_{j, \sigma}^{\delta_{0, c_j(h^{-1})}} \one_{h^{-1}}.
$$
Moreover, for any $\gamma \in G_\sigma^*$, we have
$$
    \iota_{\sigma'}^* \phi_{(\sigma, \gamma)} = \frac{\delta_{\sigma, \sigma'}}{|G_\sigma|} \sum_{h \in G_\sigma} \chi_\gamma(h) \prod_{j \in I'_{\sigma}} \sw_{j, \sigma}^{-c_j(h^{-1})} \one_{h^{-1}}.
$$
Therefore, we have
$$
    \iota_{\sigma, *} \one_{h^{-1}} = \sum_{\gamma \in G_\sigma^*} \chi_{\gamma}(h^{-1}) \prod_{j \in I'_{\sigma}} \sw_{j, \sigma}^{1-c_j(h)} \phi_{(\sigma, \gamma)} 
$$
by the orthogonality of characters. 

We therefore introduce the following notation. Let $i = 1, \dots, n$. For $\gamma \in G_{\sigma_{i, +}}^*$ and $a \in \bZ$, define
$$
    \txi_a^\gamma(\tX_i) \coloneqq \sum_{h \in G_{\sigma_{i,+}}} \chi_\gamma(h^{-1}) \prod_{j \in I'_{\sigma_{i, +}}} \sw_{j, \sigma_{i, +}}^{1 - c_j(h)} 
    \sum_{\substack{\mu_i \in \bZ_{>0}, \lambda_i \in G_{\tau_i} \\ h_+(\mu_i, \lambda_i) = h}} 
    \left(\frac{\su_1^{\bff_{i, +}}}{\mu_i} \right)^{-a-2} 
    D_{\bff_{i, +}, \mu_i, \lambda_i} \eps_{\eta_i}^{\bff_{i,+}}(\mu_i, \lambda_i) \tX_i^{\mu_i}.
$$
Similarly, when $\cL_{\tau_i}$ is inner, for $\gamma \in G_{\sigma_{i, -}}^*$ and $a \in \bZ$, define
$$
    \txi_a^\gamma(\tX_i) \coloneqq \sum_{h \in G_{\sigma_{i, -}}} \chi_\gamma(h^{-1}) \prod_{j \in I'_{\sigma_{i, -}}} \sw_{j, \sigma_{i, -}}^{1 - c_j(h)} 
    \sum_{\substack{\mu_i \in \bZ_{<0}, \lambda_i \in G_{\tau_i} \\ h_-(-\mu_i, \lambda_i) = h}} 
    \left(\frac{\su_1^{\bff_{i, -}}}{-\mu_i} \right)^{-a-2} 
    D_{\bff_{i, -}, - \mu_i, \lambda_i} \eps_{\eta_i}^{\bff_{i,-}}(-\mu_i, \lambda_i)  e^{-\mu_i\int_{l_{\tau_i}}\btau'} \tX_i^{\mu_i}.
$$
Observe that
$$
    \tX_i \frac{d}{d\tX_i} \txi_a^\gamma(\tX_i) = \pm \su_1^{\bff_{i, \pm}} \txi_{a+1}^\gamma(\tX_i).
$$
Then we have
$$
    F_{g,n}(\btau; \tX_1, \dots, \tX_n)
    = \sum_{\gamma_i \in G_{\sigma_{i, \pm}}^*} \sum_{a_i \in \bZ_{\ge 0}}
    \double{ \phi_{(\sigma_{1, \pm}, \gamma_1)} \hat{\psi}^{a_1}, \dots, \phi_{(\sigma_{n, \pm}, \gamma_n)} \hat{\psi}^{a_n} }_{g,n}^{\cX,\bT'}
    \prod_{i = 1}^n \txi_{a_i}^{\gamma_i}(\tX_i) \bigg|_{\bT_{\sff}}.
$$

For $i = 1, \dots, n$ and $\bsi = (\sigma, \gamma) \in I_\Sigma$, define
$$
    \txi_{i}^{\bsi}(z, \tX_i) \coloneqq \begin{cases}
        \sum_{a \in \bZ_{\ge -2}} z^a \txi_a^\gamma(\tX_i) & \text{if } \sigma = \sigma_{i, \pm},\\
        0 & \text{otherwise}.
    \end{cases}
$$
This quantity satisfies that
\begin{equation}\label{eqn:AmodelXiDerivative}
    [z^a] \tX_i \frac{d}{d\tX_i} \txi_{i}^{\bsi}(z, \tX_i) = \su_1^{\bff_{i, +}} [z^{a+1}] \txi_{i}^{\bsi}(z, \tX_i)
\end{equation}
for any $a \in \bZ_{\ge -2}$. Then the above computation may be rewritten as follows.

\begin{lemma}
We have
\begin{equation}\label{eqn:AmodelFgnCanonical}
    F_{g,n}(\btau; \tX_1, \dots, \tX_n)
    = [z_1^{-1} \cdots z_n^{-1}] \sum_{\bsi_i \in I_\Sigma}
    \double{ \frac{\phi_{\bsi_1}}{z_1 - \hat{\psi}}, \dots, \frac{\phi_{\bsi_n}}{z_n - \hat{\psi}} }_{g,n}^{\cX,\bT'}
    \prod_{i = 1}^n \txi_{i}^{\bsi_i}(z_i, \tX_i) \bigg|_{\bT_{\sff}}.
\end{equation}
\end{lemma}

In the case $(g, n) = (0, 1)$, Equation \eqref{eqn:AmodelFgnCanonical} specializes to
\begin{equation}\label{eqn:AmodelDiskCanonical}
    F_{0,1}(\btau; \tX_1) = [z_1^{-1}] \sum_{\bsi_1 \in I_\Sigma}
    \double{ \frac{\phi_{\bsi_1}}{z_1 - \hat{\psi}} }_{0,1}^{\cX,\bT'}
    \txi_{1}^{\bsi_1}(z_1, \tX_1)  \bigg|_{\bT_{\sff}}
    = [z_1^{-2}] \sum_{\bsi_1 \in I_\Sigma}
    \left( \one, \cS(\phi_{\bsi_1}) \right)_{\cX,\bT'} \txi_{1}^{\bsi_1}(z_1, \tX_1) \bigg|_{\bT_{\sff}}.
\end{equation}
In the case $(g, n) = (0, 2)$, Equation \eqref{eqn:AmodelFgnCanonical} specializes to
\begin{equation}\label{eqn:AmodelAnnCanonical}    
    F_{0,2}(\btau; \tX_1, \tX_2) = [z_1^{-1}z_2^{-1}] \sum_{\bsi_1, \bsi_2 \in I_\Sigma}
    \double{ \frac{\phi_{\bsi_1}}{z_1 - \hat{\psi}}, \frac{\phi_{\bsi_2}}{z_2 - \hat{\psi}} }_{0,2}^{\cX,\bT'}
    \txi_{1}^{\bsi_1}(z_1, \tX_1) \txi_{2}^{\bsi_2}(z_2, \tX_2)  \bigg|_{\bT_{\sff}}.
\end{equation}


\subsection{A-model open graph sum}\label{sect:AmodelOpenGraph}
We introduce the following specialization of equivariant parameters.

\begin{convention}\label{conv:SpecializeU} \rm{
Let $u \in \bC \setminus \{0\}$ and put
$u_1 = \sfa u, u_2 = \sfb u$.
We make the specialization of the equivariant parameters
$$
    \su = u, \qquad \su_1 = u_1, \qquad \su_2 = u_2.
$$
}\end{convention}

In this subsection, we further specialize to $u = 1$, i.e. $u_1 = \sfa$, $u_2 = \sfb$.

For $i = 1, \dots, n$, $\bsi \in I_\Si$, and $k \in \bZ_{\ge 0}$, the \emph{A-model open leaf} is defined by
$$
    (\cL^O)^{\bsi}_k(l_i) \coloneqq [z^k] \sum_{\brho, \bsi' \in I_\Si}
    \left( \txi_{i}^{\bsi'}(z, \tX_i)
    S^{\widehat{\underline{\brho}} }_{\spa \bsi'}(z)\right)_+ 
    R(-z)_{\brho}^{\spa \bsi}  \bigg|_{\substack{\su_1 = \sfa \\ \su_2 = \sfb}}.
$$
Then, Equation \eqref{eqn:AmodelFgnCanonical} and the descendant graph sum formula in Theorem \ref{thm:Zong} imply the following open graph sum formula.

\begin{theorem}\label{thm:AmodelOpenGraph}
For $2g - 2 + n > 0$, we have
$$ 
	F_{g,n}(\btau; \tX_1, \dots, \tX_n) = \sum_{\vGa \in \bGa_{g,n}(\cX)}\frac{w_A^O(\vGa)}{|\Aut(\vGa)|},
$$
where $w_A^O(\vGa)$ is defined by substituting the ordinary leaves $\cL^\bu$ in $w_A^{\bu}(\vGa)$ with the open leaves $\cL^O$ and applying the specialization $\big|_{\substack{\su_1 = \sfa \\ \su_2 = \sfb}}$.
\end{theorem}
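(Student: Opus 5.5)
The plan is to substitute the canonical-basis expression \eqref{eqn:AmodelFgnCanonical} into the descendant graph sum of Theorem \ref{thm:Zong}, using multilinearity of the correlator in its insertions. Concretely, I would expand $\frac{\phi_{\bsi_i}}{z_i-\hat{\psi}} = \sum_{a \ge 0} \phi_{\bsi_i}\hat{\psi}^a z_i^{-a-1}$. Taking $[z_i^{-1}]$ of the product with $\txi_i^{\bsi_i}(z_i,\tX_i)$ then picks out the coefficient $\txi_{a}^{\gamma_i}(\tX_i)$ of $z^{a}$ in $\txi_i^{\bsi_i}$. Hence $F_{g,n}$ is the descendant correlator $\double{\bu_1,\dots,\bu_n}_{g,n}^{\cX,\bT'}$ with the specialization $\bu_i(z) = \sum_{\bsi}\sum_{a\ge 0} \txi_a^{\gamma}(\tX_i)\,\phi_{\bsi} z^a$. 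Since $\phi_{\bsi} = \phi_{\bsi}(0)$ is the classical canonical basis, this means $\bar{\bu}_i^{\bsi}(z) = (\txi_i^{\bsi}(z,\tX_i))_{\ge 0}$, the truncation to nonnegative powers of $z$. The restriction to $\bT_{\sff}$ and then to $\su_1=\sfa$, $\su_2=\sfb$ is applied afterwards. Because the coefficients are polynomial or rational in $\su_1,\su_2$ and the graph sum identity holds over $\bST$, this specialization commutes with the graph sum, provided Assumption \ref{assump:GenericFraming} prevents poles from being hit.

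Next, I would apply Theorem \ref{thm:Zong}, valid since $2g-2+n>0$, with the leaf weight written in the form \eqref{eqn:AmodelOrdinaryLeaf}. Vertex, edge and dilaton weights do not involve $\bu_i$, so they are unchanged. The ordinary leaf becomes
$$[z^k]\sum_{\brho,\bsi'}\big((\txi_i^{\bsi'})_{\ge 0}(z)\,S^{\widehat{\underline{\brho}}}_{\spa\bsi'}(z)\big)_+ R(-z)_{\brho}^{\spa\bsi}.$$

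The remaining point, and the only real obstacle, is to show that this equals the open leaf $(\cL^O)^{\bsi}_k$, which uses the full $\txi_i^{\bsi'}$ including the $z^{-2}$ and $z^{-1}$ terms. Write $S(z) = \one + \sum_{m\ge1} S_m z^{-m}$, the usual $1/z$ expansion of the $\cS$-operator. A product of a nonnegative-power series with $S$ can receive contributions to nonnegative powers from the negative part, so the two expressions are not trivially equal. To reconcile them I would use the fact that $F_{g,n}$ in \eqref{eqn:AmodelFgnCanonical} is defined with the $[z_i^{-1}]$ extraction. Only the coefficients $a\ge0$ of $\txi$ pair with $\hat{\psi}^a$, and the $a=-1,-2$ coefficients are killed by the extraction, since $\frac{1}{z-\hat\psi}$ has only negative powers of $z$. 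Therefore the correct open leaf should be defined with $(\txi)_{\ge0}$. If the paper's $\cL^O$ uses the full $\txi$, I would verify the identity
$$\Big(\big(z^{-1}\txi_{-1}+z^{-2}\txi_{-2}\big)S^{\widehat{\underline{\brho}}}_{\spa\bsi'}(z)\Big)_+ = 0.$$
This holds because $S$ contains only nonpositive powers of $z$, so multiplying by negative powers produces no nonnegative-power terms.

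With the leaves identified, the theorem follows, with $w_A^O(\vGa)$ exactly as described and the automorphism factors unchanged.
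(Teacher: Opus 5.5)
This is correct and is the paper's argument: the paper gets the theorem by inserting \eqref{eqn:AmodelFgnCanonical} into Theorem \ref{thm:Zong}, which amounts to taking $\bar{\bu}_i^{\bsi}=(\txi_i^{\bsi})_{\ge 0}$ in \eqref{eqn:AmodelOrdinaryLeaf}. The paper leaves implicit the point you check, namely that the $z^{-1}$ and $z^{-2}$ terms of $\txi_i^{\bsi'}$ drop out under $(\cdot)_+$ because $S^{\widehat{\underline{\brho}}}_{\spa\bsi'}(z)$ has only nonpositive powers of $z$; it likewise takes for granted that the specialization $\su_1=\sfa$, $\su_2=\sfb$ is compatible with the graph sum, which you assert rather than prove.
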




\section{Mirror curves and disk mirror symmetry}\label{sect:DiskMirror}
In this section, we study the mirror curves of toric Calabi-Yau 3-orbifolds. Moreover, we prove disk mirror symmetry which retrieves the disk potential $F_{0,1}$ from local coordinate expansions on the mirror curve (Theorem \ref{thm:DiskExpansion}).

\subsection{Notation}
We choose $H_1, \dots, H_{\fp} \in \bL^\vee \cap \tNef(\cX)$ that satisfy the following conditions. We denote $\Hbar_a \coloneqq \kappa(H_a)$ where $\kappa$ is defined in \eqref{eqn:Kirwan}.
\begin{itemize}
	\item We have $H_a = D_{3+a}$ for $a = \fp'+1, \dots, \fp$. In particular, $\Hbar_a = 0$ for such $a$.

	\item The set $\{H_1, \dots, H_{\fp}\}$ is a $\bQ$-basis of $\bL^\vee_{\bQ} \coloneqq \bL^\vee \otimes \bQ$. In particular, $\{\Hbar_1, \dots, \Hbar_{\fp'}\}$ is a $\bQ$-basis of $H^2(\cX; \bQ)$.
	
	\item Given any 3-cone $\sigma \in \Sigma(3)$, and writing $H_a = \sum_{i \in I_\sigma} s_{ai}^\sigma D_i$ for unique coefficients $s_{ai}^\sigma \in \bQ_{\ge 0}$, we require that $s_{ai}^\sigma \in \bZ_{\ge 0}$ for all $\sigma, a, i$.
	
\end{itemize}
For $a = 1, \dots, \fp'$, let $\Hbar_a^{\bT'} \in H^2_{\bT'}(\cX; \bZ)$ be the unique lift of $\Hbar_a$ whose restriction to $\fp_{\si_0}$ is zero, where $\sigma_0$ is the fixed reference 3-cone. We define the coefficients $m_i^{(a)}$ inverse to $s_{ai}^{\sigma_0}$ by
$$
    D_i = \sum_{a = 1}^{\fp} m_i^{(a)} H_a, \qquad i = 4, \dots, 3+\fp.
$$

Let $q = (q_1, \dots, q_{\fp})$ be formal variables. For $\beta \in \bK_{\eff} \subset \bK$, we define
$$
	q^\beta \coloneqq \prod_{a = 1}^{\fp} q_a^{\inner{H_a, \beta}}
$$
which is a monomial in $q$.

\subsection{Mirror curves}\label{sect:MirrorCurve}
Consider $(\bC^*)^2 = M' \otimes \bC^*$. Let $X,Y$ be the coordinates corresponding to the basis elements $e_1^\vee$, $e_2^\vee$ fixed by the reference flag $\bff_0$. Let
$$
    x = -\log X, \qquad y = -\log Y
$$
be coordinates on the universal cover.

Given any $\si\in \Si(3)$ and $i$, we define
$$
a_i^\si(q)\coloneqq
\begin{cases}
1, & i\in I'_\si,\\
\prod_{a=1}^{\fp} q_a^{s^\si_{ai}}, & i\in I_\si,
\end{cases}
$$
and we omit the superscript ``$\sigma$'' when $\sigma = \sigma_0$. For a flag $\bff = (\tau, \sigma) \in F(\Sigma)$, we introduce the coordinates 
$$
    X_{\bff} = X^{a_{\bff}} Y^{b_{\bff}} a_{i_1^{\bff}}(q)^{\frac{1}{\fr_\bff}} a_{i_2^{\bff}}(q)^{\frac{\fs_\bff}{\fr_\bff \fm_\tau}} a_{i_3^{\bff}}(q)^{-\frac{\fm_\tau + \fs_\bff}{\fr_\bff \fm_\tau}},  \qquad 
    Y_{\bff} = X^{c_{\bff}} Y^{d_{\bff}} \left(\frac{a_{i_2^{\bff}}(q)}{a_{i_3^{\bff}}(q)}\right)^\frac{1}{\fm_\tau}
$$
and
$$
    x_{\bff} = -\log X_{\bff}, \qquad y_{\bff} = -\log Y_{\bff}.
$$

The \emph{mirror curve} of $\cX$ is the affine curve
$$
    C_q\coloneqq\{(X,Y)\in (\bC^*)^2: H(X,Y,q)=0\}
$$
where
$$
H(X,Y,q)\coloneqq  \sum_{i=1}^{3+\fp} a_i(q) X^{m_i} Y^{n_i}.
$$

For a flag $\bff = (\tau, \sigma) \in F(\Sigma)$, we define
$$
H_{\bff} (\XX , \YY, q)\coloneqq \sum_{i=1}^{3+\fp} a_i^\si(q) \XX^{m_i^{\bff}} \YY^{n_i^{\bff}}.
$$
We have the relation
$$
    H(X,Y,q) = a_{i_3^{\bff}}(q) X^{m_{i_3^{\bff}}} Y^{m_{i_3^{\bff}}}H_{\bff}(X_{\bff}, Y_{\bff}, q).
$$
Thus $H_{\bff}$ provides an alternative parameterization of the mirror curve.

\subsection{Meromorphic functions and Liouville form}
Consider the specialization $\su = u, \su_1 = u_1 = \sfa u, \su_2 = u_2 = \sfb u$ as in Convention \ref{conv:SpecializeU}. Define
$$
    \hx = u_1 x + u_2 y,\qquad \hy = \frac{y}{u_1}.
$$
We have
$$
    \omega_{0,1} \coloneqq \hy d\hx = y (dx + \sff dy).
$$
Let
$$
    \Crit \coloneqq \{p_{\bsi}\}_{\bsi \in I_\Sigma}
$$
denote the set of critical points of $\hx$.

\begin{assumption}\label{assump:GenericFraming2}\rm{
In addition to Assumption \ref{assump:GenericFraming}, we choose $\sff$ generically such that $\hx$ is holomorphic Morse and $d\hx$ intersects the zero section of the cotangent bundle transversely.
}
\end{assumption}

For a flag $\bff = (\tau, \sigma) \in F(\Sigma)$, we define
$$
    \hx_{\bff} = u_1^{\bff} x_{\bff} + u_2^{\bff} y_{\bff},\qquad \hy_{\bff} = \frac{y_{\bff}}{u_1^{\bff}}.
$$
Here,
$$
    u_1^{\bff} = (\sfa d_{\bff} - \sfb c_{\bff}) u = \sfa_{\bff} u, \qquad u_2^{\bff} = (-\sfa b_{\bff} + \sfb a_{\bff}) u = \sfb_{\bff} u.
$$

For $\sigma \in \Sigma(3)$, the \emph{tropical distance} between $\sigma$ and $\sigma_0$ (as defined in \cite[Section 5.2]{FLYZ25}) is
$$
    c_\sigma \coloneqq \sfa_{\bff} u \sum_{j = 1}^3 w_j^{\bff} \log a_{i_j^{\bff}}(q)
$$
for any flag $\bff = (\tau, \sigma)$.\footnote{We note that in \cite{FLYZ25}, the symbol ``$w_j^{\bff}$'' denotes the quantity $u_1^{\bff} w_j^{\bff} = \sfa_{\bff} u w_j^{\bff}$ here.} Alternatively, if we set
$$
    T_1(\sigma) \coloneqq \left( a_{i_1^{\bff}}(q)^{n_{i_2^{\bff}} - n_{i_3^{\bff}}} a_{i_2^{\bff}}(q)^{n_{i_3^{\bff}} - n_{i_1^{\bff}}}  a_{i_3^{\bff}}(q)^{n_{i_1^{\bff}} - n_{i_2^{\bff}}} \right)^{\frac{1}{|G_\sigma|}},
$$
$$
    T_2(\sigma) \coloneqq \left( a_{i_1^{\bff}}(q)^{m_{i_3^{\bff}} - m_{i_2^{\bff}}} a_{i_2^{\bff}}(q)^{m_{i_1^{\bff}} - m_{i_3^{\bff}}}  a_{i_3^{\bff}}(q)^{m_{i_2^{\bff}} - m_{i_1^{\bff}}} \right)^{\frac{1}{|G_\sigma|}}
$$
which are monomials in $q^{\pm1}$, we have
$$
    c_\sigma = \sfa u \log T_1(\sigma) + \sfb u \log T_2(\sigma).
$$
Then, for any $\bff = (\tau, \sigma) \in F(\Sigma)$, we have
$$
    \hx_{\bff} = \hx - c_\sigma.
$$
In particular, $d\hx_{\bff} = d\hx$ and the set $\Crit$ does not depend on the choice of the reference flag $\bff_0$.

\subsection{Framed coordinates and ramifications}
Let $\Cbar_q$ be the compactification of $C_q$. We assume that $q$ belongs to an open parameter domain such that $|q|$ is sufficiently small and $\Cbar_q$ is smooth. The meromorphic function
$$
    \sP\coloneqq X^{\sfa}Y^{\sfb}\colon  \Cbar_q \longrightarrow \bP^1
$$
is a ramified covering of smooth projective curves. For a flag $\bff = (\tau, \sigma) \in F(\Sigma)$, we have
$$
    \XX^{\sfa_{\bff}}\YY^{\sfb_{\bff}} = X^{\sfa}Y^{\sfb} T_1(\sigma)^{\sfa} T_2(\sigma)^{\sfb}.
$$

Let $\tau \in \Sigma(2)$. When $\cL_\tau$ is outer, the region on $C_q$ defined by $|X_{\bff}|$ being sufficiently small consists of $\fm_\tau$ punctured (open) disks indexed by $G_\tau^*$. The region is disjoint from $\Crit$. For $\bfeta =  (\tau, \eta)$, let $U_{\bfeta}$ denote the component indexed by $\eta$. Its closure in $\Cbar_q$ contains the puncture corresponding to $\bfeta$ and lying in the limit $X_{\bff} \to 0$. The puncture is mapped to $0$ (resp. $\infty$) under $\sP$ if $\sgn(\bff) = +1$ (resp. $-1$). We say that the puncture is \emph{positive} in the former case and \emph{negative} in the latter case. By construction, the punctures corresponding to $\tau_0$ are all positive. On $U_{\bfeta}$, consider the local framed coordinate
$$
    \hX_{\bff, \eta} \coloneqq \xi_{\sfa_{\bff}}X_{\bff}Y_{\bff}^{\sff_{\bff}}.
$$
The definition here involves locally choosing a branch of $\sfa_{\bff}$-th root of $Y_{\bff}$ which we will specify later in Section \ref{sect:LocalExpansion}. On $U_{\bfeta}$, the covering $\sP$ restricts to a degree-$\sfa_\tau$ covering of punctured disks. We set
$$
    \hX_{\bfeta} \coloneqq \hX_{\bff, \eta}.
$$

\begin{remark}\rm{
The degree of the covering $\sP$ is
$$
    \sum_{\substack{(\tau, \eta) \in J_\Sigma \\ \text{ positive}}} \sfa_\tau = \sum_{\substack{(\tau, \eta) \in J_\Sigma \\ \text{ negative}}} \sfa_\tau.
$$
This is consistent with the Riemann-Hurwitz formula since each of the $2\fg - 2 + \fn$ ramification points in $\Crit \subset C_q$ has index two.
}
\end{remark}

When $\cL_\tau$ is inner, the region on $C_q$ defined by $|X_{\bff_+}|$ and $|X_{\bff_-}|$ both being sufficiently small consists of $\fm_\tau$ (open) annuli indexed by $G_\tau^*$. The region is disjoint from $\Crit$. For $\bfeta =  (\tau, \eta)$, let $U_{\bfeta}$ denote the component indexed by $\eta$. On $U_{\bfeta}$, consider the local framed coordinates
$$
    \hX_{\bff_+, \eta} \coloneqq \xi_{\sfa_{\bff_+}}X_{\bff_+}Y_{\bff_+}^{\sff_{\bff_+}}, \qquad \hX_{\bff_-, \eta} \coloneqq \xi_{\sfa_{\bff_-}}X_{\bff_-}Y_{\bff_-}^{\sff_{\bff_-}}.
$$
The definition here involves locally choosing a branch of $\sfa_{\bff}$-th root of $Y_{\bff_\pm}$ which we will specify later in Section \ref{sect:LocalExpansion}. On $U_{\bfeta}$, the covering $\sP$ restricts to a degree-$\sfa_{\tau}$ covering of annuli. In this case recall that $\sfa_\tau = \pm \sfa_{\bff_\pm}$. We may compute that
$$
    \frac{T_1(\sigma_+)^{\sfa} T_2(\sigma_+)^{\sfb}}{T_1(\sigma_-)^{\sfa} T_2(\sigma_-)^{\sfb}} = q^{\sfa[l_\tau]}
$$
(cf. \cite[Lemma 4.4]{Yu25}) and thus
$$
    \hX_{\bff_+, \eta} \hX_{\bff_-, \eta} = q^{[l_\tau]}.
$$
We set 
$$
    \hX_{\bfeta} \coloneqq \hX_{\bff_+, \eta}.
$$
Then
$$
    \hX_{\bff_-, \eta} = q^{[l_\tau]} \hX_{\bfeta}^{-1}.
$$

See Figures \ref{fig:CoveringC3}, \ref{fig:CoveringP1} for the coverings induced by the framings in the examples in Figure \ref{fig:Framing}.

\begin{figure}[htb]
$$
	\begin{tikzpicture}[scale=0.8]
		\draw (0,0) -- (1,0) -- (2,0.5);
        \draw (0,-1) -- (1,0);
        \node[left] at (0,0){$1$};
        \node[left] at (0,-1){$-2$};
        \node[right] at (2,0.5){$-\frac{1}{2}$}; 
        \node at (1,0){$\bullet$};

        \coordinate (C1) at (5,1);

        \draw ($(C1)+(-1,0.12)$) .. controls ($(C1)+(-0.3,0.05)$) and ($(C1)+(0.3,0.15)$) .. ($(C1)+(1,0.62)$);
        \draw ($(C1)+(-1,-0.12)$) .. controls ($(C1)+(-0.2,-0.05)$) and ($(C1)+(-0.2,-0.15)$) .. ($(C1)+(-1,-0.88)$);
        \draw ($(C1)+(-1,-1.12)$) .. controls ($(C1)+(-0.3,-0.5)$) and ($(C1)+(0.2,-0.1)$) .. ($(C1)+(1,0.38)$);

        \draw[draw=gray, very thick] ($(C1)+(-1,0)$) ellipse [x radius=0.06, y radius=0.12];
        \draw[draw=gray, very thick] ($(C1)+(-1,-1)$) ellipse [x radius=0.06, y radius=0.12];
        \draw[draw=gray, very thick] ($(C1)+(1,0.5)$) ellipse [x radius=0.06, y radius=0.12];

        \draw[->] ($(C1)+(0,-1.3)$) -- ($(C1)+(0,-2.3)$);
        \node[left] at ($(C1)+(0,-1.8)$){$\sP$};
        \node[right] at ($(C1)+(0,-1.8)$){$2:1$};

        \draw ($(C1)+(-1,-2.88)$) -- ($(C1)+(1,-2.88)$);
        \draw ($(C1)+(-1,-3.12)$) -- ($(C1)+(1,-3.12)$);
        \draw[draw=gray, very thick] ($(C1)+(-1,-3)$) ellipse [x radius=0.06, y radius=0.12];
        \draw[draw=gray, very thick] ($(C1)+(1,-3)$) ellipse [x radius=0.06, y radius=0.12];

        \draw (9,0) -- (10,0) -- (11,0.33);
        \draw (9,-0.5) -- (10,0);
        \node[left] at (9,0){$2$};
        \node[left,below] at (9,-0.5){$-\frac{3}{2}$};
        \node[right] at (11,0.33){$-\frac{1}{3}$}; 
        \node at (10,0){$\bullet$};

        \coordinate (C2) at (14,1);

        \draw ($(C2)+(-1,0.12)$) .. controls ($(C2)+(-0.35,0.05)$) and ($(C2)+(0.3,0.15)$) .. ($(C2)+(1,0.45)$);
        \draw ($(C2)+(-1,-0.12)$) .. controls ($(C2)+(-0.2,0)$) and ($(C2)+(-0.2,-0.1)$) .. ($(C2)+(-1,-0.38)$);
        \draw ($(C2)+(-1,-0.62)$) .. controls ($(C2)+(-0.25,-0.2)$) and ($(C2)+(0.3,0)$) .. ($(C2)+(1,0.21)$);

        \draw[draw=gray, very thick] ($(C2)+(-1,0)$) ellipse [x radius=0.06, y radius=0.12];
        \draw[draw=gray, very thick] ($(C2)+(-1,-0.5)$) ellipse [x radius=0.06, y radius=0.12];
        \draw[draw=gray, very thick] ($(C2)+(1,0.33)$) ellipse [x radius=0.06, y radius=0.12];

        \draw[->] ($(C2)+(0,-1.3)$) -- ($(C2)+(0,-2.3)$);
        \node[left] at ($(C2)+(0,-1.8)$){$\sP$};
        \node[right] at ($(C2)+(0,-1.8)$){$3:1$};

        \draw ($(C2)+(-1,-2.88)$) -- ($(C2)+(1,-2.88)$);
        \draw ($(C2)+(-1,-3.12)$) -- ($(C2)+(1,-3.12)$);
        \draw[draw=gray, very thick] ($(C2)+(-1,-3)$) ellipse [x radius=0.06, y radius=0.12];
        \draw[draw=gray, very thick] ($(C2)+(1,-3)$) ellipse [x radius=0.06, y radius=0.12];
    \end{tikzpicture}
$$
\caption{The coverings $\sP$ for two choices of framings for $\bC^3$. The local regions $U_{\bfeta}$ are illustrated by the thickened circles and the degree of $\sP$ on each $U_{\bfeta}$ is given by the corresponding $\sfa_\tau$.}
\label{fig:CoveringC3}
\end{figure}

\begin{figure}[htb]
$$
    \begin{tikzpicture}
        \draw (9,-0.5) -- (10,-0.5) -- (12,0.5) -- (13,0.5);
        \draw (9,-1.5) -- (10,-0.5);
        \draw (12,0.5) -- (13,1.5);
        \node at (10,-0.5){$\bullet$};
        \node at (12,0.5){$\bullet$};
        \node[left] at (9,-0.5){$1$};
        \node[left] at (9,-1.5){$-2$};
        \node[right] at (13,0.5){$1$};
        \node[right] at (13,1.5){$-2$};
        
        \node[below] at (11,0){$-\frac{1}{2}$}; 

         \coordinate (C1) at (17,0.8);

        \draw ($(C1)+(-2,-0.38)$) .. controls ($(C1)+(-1.3,-0.45)$) and ($(C1)+(-0.7,-0.35)$) .. ($(C1)+(0,0.12)$);
        \draw ($(C1)+(-2,-0.62)$) .. controls ($(C1)+(-1.2,-0.55)$) and ($(C1)+(-1.2,-0.65)$) .. ($(C1)+(-2,-1.38)$);
        \draw ($(C1)+(-2,-1.62)$) .. controls ($(C1)+(-1.3,-1)$) and ($(C1)+(-0.8,-0.6)$) .. ($(C1)+(0,-0.12)$);
        \draw ($(C1)+(2,0.38)$) .. controls ($(C1)+(1.3,0.45)$) and ($(C1)+(0.7,0.35)$) .. ($(C1)+(0,-0.12)$);
        \draw ($(C1)+(2,0.62)$) .. controls ($(C1)+(1.2,0.55)$) and ($(C1)+(1.2,0.65)$) .. ($(C1)+(2,1.38)$);
        \draw ($(C1)+(2,1.62)$) .. controls ($(C1)+(1.3,1)$) and ($(C1)+(0.8,0.6)$) .. ($(C1)+(0,0.12)$);

        \draw[draw=gray, very thick] ($(C1)+(-2,-0.5)$) ellipse [x radius=0.06, y radius=0.12];
        \draw[draw=gray, very thick] ($(C1)+(-2,-1.5)$) ellipse [x radius=0.06, y radius=0.12];
        \draw[draw=gray, very thick] ($(C1)+(0,0)$) ellipse [x radius=0.06, y radius=0.12];
        \draw[draw=gray, very thick] ($(C1)+(2,0.5)$) ellipse [x radius=0.06, y radius=0.12];
        \draw[draw=gray, very thick] ($(C1)+(2,1.5)$) ellipse [x radius=0.06, y radius=0.12];

        \draw[->] ($(C1)+(0,-1.3)$) -- ($(C1)+(0,-2.3)$);
        \node[left] at ($(C1)+(0,-1.8)$){$\sP$};
        \node[right] at ($(C1)+(0,-1.8)$){$2:1$};

        \draw ($(C1)+(-2,-2.88)$) -- ($(C1)+(2,-2.88)$);
        \draw ($(C1)+(-2,-3.12)$) -- ($(C1)+(2,-3.12)$);
        \draw[draw=gray, very thick] ($(C1)+(-2,-3)$) ellipse [x radius=0.06, y radius=0.12];
        \draw[draw=gray, very thick] ($(C1)+(2,-3)$) ellipse [x radius=0.06, y radius=0.12];
        \draw[draw=gray, very thick] ($(C1)+(0,-3)$) ellipse [x radius=0.06, y radius=0.12];
    \end{tikzpicture}
$$
\caption{The covering $\sP$ for a choice of framing for the resolved conifold. The degree on the inner region is two while the degree on each outer region is one.}
\label{fig:CoveringP1}
\end{figure}

\subsection{Closed mirror map}
We now define the \emph{closed mirror map} which relates the B-model moduli parameters $q$ to the small quantum cohomology parameter $\btau$. It is given by the first-order term in the asymptotics expansion in $z^{-1}$ of the $\bT'$-equivariant \emph{small $I$-function} of $\cX$ \cite{Iritani09,CCIT15}
$$	
    I_{\bT'}(q,z) \coloneqq e^{(\sum_{a = 1}^{\fp'} \Hbar_a^{\bT'}\log q_a)/z} \sum_{\beta \in \bK_{\eff}} q^\beta \prod_{i = 1}^{3 + \fp'} \frac{\Gamma\left(1 + \frac{\Dbar_i^{\bT'}}{z} - \{-\inner{D_i, \beta}\} \right)}{\Gamma\left(1 + \frac{\Dbar_i^{\bT'}}{z} + \inner{D_i, \beta} \right)} \prod_{i = 4+\fp'}^{3+\fp} \frac{\Gamma\left(1 - \{-\inner{D_i, \beta}\} \right)}{\Gamma\left(1 + \inner{D_i, \beta} \right)} \frac{\one_{v(\beta)}}{z^{\age(v(\beta))}},
$$
i.e.,
$$
	I_{\bT'}(q,z) = \one + z^{-1}\btau(q) + o(z^{-1}).
$$
By the $\bT'$-equivariant small toric mirror theorem \cite{Givental98,CCK15,CCIT15}, we have
$$
    J_{\bT'}(\btau, z) = I_{\bT'}(q, z)
$$
under the closed mirror map $\btau = \btau(q)$ where $J_{\bT'}$ is the $\bT'$-equivariant \emph{small $J$-function} of $\cX$ defined by
$$
	(J_{\bT'}(\btau,z),b)_{\cX,\bT'} = (\one,\cS(b))_{\cX,\bT'}
$$
for any $b\in H_{\CR,\bT'}^*(\cX;\bST)$.

We give an explicit expression for the closed mirror map $\btau = \btau(q)$. For $i = 1, \dots, 3+\fp'$, set
$$
    \lambda_i = \Dbar_i^{\bT'} \big|_{\fp_{\sigma_0}} \in H^2_{\bT'}(\pt; \bQ)
$$
which is nonzero only for $i \in I'_{\sigma_0}$. For $i = 1, \dots, 3+\fp'$, let
$$
    \Omega_i \coloneqq \{\beta \in \bK_{\eff} : v(\beta) = 0, \inner{D_i, \beta} <0, \inner{D_j, \beta} \text{ for } j \in \{1, \dots, 3+\fp\} \setminus \{i\} \}
$$
and
$$
    A_i(q) \coloneqq \sum_{\beta \in \Omega_i}  q^\beta \frac{(-1)^{-\inner{D_i, \beta} -1}(-\inner{D_i, \beta} - 1)!}{\prod_{j \in \{1, \dots, 3 + \fp\} \setminus \{i\}} \inner{D_j, \beta}!}.
$$
For $i = 4 + \fp', \dots, 3+\fp$, let
$$
    \Omega_i \coloneqq \{\beta \in \bK_{\eff} : v(\beta) = b_i,  \inner{D_j, \beta} \not \in \bZ_{<0} \text{ for } j \in \{1, \dots, 3+\fp\} \}
$$
and
$$
    A_i(q) \coloneqq \sum_{\beta \in \Omega_i}  q^\beta \prod_{j = 1}^{3+\fp} \frac{\Gamma\left(1 - \{-\inner{D_j, \beta}\} \right)}{\Gamma\left(1 + \inner{D_j, \beta} \right)}.
$$
Based on the decomposition \eqref{eqn:btauDecompose}, we write
\begin{equation}\label{eqn:btauHBasis}
    \btau = \btau_0 + \sum_{a = 1}^{\text{$\fp'$}} \btau_a \Hbar_a^{\bT'} + \sum_{a = \text{$\fp'$}+1}^{\text{$\fp$}} \btau_a \one_{b_{3+a}}.
\end{equation}
Then
$$
    \btau_0(q) = \sum_{i = 1}^{3+\fp'} \lambda_i A_i(q), \qquad
    \btau_a(q) = \begin{cases}
        \log q_a + \sum_{i = 1}^{3+\fp'} m_i^{(a)} A_i(q) & \text{if } 1 \le a \le \fp',\\
        A_{3+a}(q) & \text{if } \fp'+1 \le a \le \fp.
    \end{cases}
$$

\subsection{Open mirror map}
For $\bff = (\tau, \sigma) \in F(\Sigma)$, and $\eta \in G_\tau^*$, the \emph{open mirror map} is defined by
$$
    \log \tX_{\bff, \eta} = \log \hX_{\bff, \eta} + \sum_{i = 1}^{3 + \text{$\fp'$}} \frac{\Dbar_i^{\bT'} \big|_{\text{$\fp$}_\sigma}}{\su_1^{\bff}} \bigg|_{\bT_{\bff}} A_i(q) = \log \hX_{\bff, \eta} + \sum_{j = 1}^{3} w_j^{\bff} A_{i_j^{\bff}}(q). 
$$
The definition is independent of $\eta$. For $\bff_0$, the definition reads
$$
    \log \tX_{\bff_0, \eta} = \log \hX_{\bff_0, \eta} + \frac{1}{\fr} A_1(q) + \frac{\fs + \fr \sff}{\fr\fm} A_2(q) - \frac{\fs + \fm + \fr \sff}{\fr \fm}A_3(q).
$$
When $\cL_\tau$ is inner, we check that
$$
    \log \hX_{\bff_+, \eta} + \sum_{j = 1}^{3} w_j^{\bff_+} A_{i_j^{\bff_+}}(q) + \log \hX_{\bff_-, \eta} + \sum_{j = 1}^{3} w_j^{\bff_-} A_{i_j^{\bff_-}}(q) = \log q^{[l_\tau]} + \sum_{i = 1}^{3 + \fp'} \inner{D_i, [l_\tau]} A_i(q)
$$
which agrees with
$$
    \log \tX_{\bff_+, \eta} + \log \tX_{\bff_-, \eta} = \inner{\btau', [l_\tau]} 
$$
under the closed mirror map.

Let $\bfeta = (\tau, \eta) \in J_\Sigma$. Let $\bff$ be the unique choice when $\cL_\tau$ is outer and $\bff = \bff_+$ when $\cL_\tau$ is inner. We have
$$
    \log \tX_{\bfeta} = \log \hX_{\bfeta} + \sum_{j = 1}^{3} w_j^{\bff} A_{i_j^{\bff}}(q). 
$$

\begin{remark}\rm{
Under the closed mirror map, the coordinates $q$ on the secondary variety can be viewed as moduli parameters for $\cX$. In addition, when $\cX$ is smooth, the open mirror map realizes the regions $U_{\bfeta}$ on the mirror curve as the complexified moduli space of Aganagic-Vafa branes in $\cX$. Indeed, the tropicalization of $C_q$, or the amoeba, is a region in $\bR^2$ close to the toric graph of $\cX$ \cite{IV96,Mikhalkin00,Viro01} and $U_{\bfeta}$ is contained in a tube whose image is around the edge corresponding to the 2-cone $\tau$. The tubes arise alternatively in the cell decomposition of $C_q$ induced by the Morse function $\Re(\hx)$. When $\cX$ is in general an orbifold, the mirror curve can be viewed as the complexified moduli space of pairs of an Aganagic-Vafa brane together with a flat $U(1)$-connection.
}\end{remark}

\subsection{Open curve classes and twisting}
Let $\tau \in \Sigma(2)$. Let $\bff$ be the unique choice when $\cL_\tau$ is outer and $\bff = \bff_+$ when $\cL_\tau$ is inner. Define
$$
    \bK_\tau \coloneqq \{(\beta, d) \in \bK_{\eff, \sigma} \times \bZ_{\neq 0} : \inner{D_{i_1^{\bff}}, \beta} + dw_1^{\bff} \in \bZ_{\ge 0}\}.
$$
For $(\beta, d) \in \bK_\tau$, we have
$$
    (\inner{D_{i_2^{\bff}}, \beta} + dw_2^{\bff}) + (\inner{D_{i_3^{\bff}}, \beta} + dw_3^{\bff}) \in \bZ.
$$
Let $\lambda = \lambda(\beta, d) \in G_\tau$ be the unique element such that $v(\beta) = h(d, \lambda(\beta, d))$, where $h$ is taken for the flag $\bff$. This condition implies that
$$
    \inner{D_{i_2^{\bff}}, \beta} + d\frac{\fs_{\bff}}{\fr_{\bff}\fm_\tau} - \frac{\lambdabar}{\fm_\tau} \in \bZ, \qquad \inner{D_{i_3^{\bff}}, \beta} + d\left(- \frac{1}{\fr_{\bff}} - \frac{\fs_{\bff}}{\fr_{\bff}\fm_\tau}\right) + \frac{\lambdabar}{\fm_\tau} \in \bZ 
$$
by our convention for $\lambdabar$ in Section \ref{sect:TwistingFactor}. It follows that
$$
    \{\inner{D_{i_2^{\bff}}, \beta} + dw_2^{\bff}\} = \left\{\frac{d\sff_{\bff}}{\fm_\tau} + \frac{\lambdabar}{\fm_\tau} \right\}, \qquad 
    \{\inner{D_{i_3^{\bff}}, \beta} + dw_3^{\bff}\} = \left\{ - \frac{d\sff_{\bff}}{\fm_\tau} - \frac{\lambdabar}{\fm_\tau} \right\}.
$$

Let $\eta \in G_\tau^*$. When $\cL_\tau$ is outer, the twisting factor $\eps_\eta^{\bff}$ defined in Section \ref{sect:TwistingFactor} reads
$$
    \eps_\eta^{\bff}(d, \lambda) = \xi_{\sfa_{\bff}}^{\sfa_{\bff}\{-\frac{d}{\sfa_{\bff}}\}} (\xi_{\sfa_{\bff}\fm_\tau}\omega_{\sfa_{\bff}\fm_\tau}^{\etabar})^{\sfa_{\bff}\fm_\tau \{\inner{D_{i_3^{\bff}}, \beta} + dw_3^{\bff}\}}.
$$
We have
$$
    (-1)^{\floor{\inner{D_{i_3^{\bff}}, \beta} + dw_3^{\bff}}+ \ceil{\frac{d}{\sfa_{\bff}}}} \eps_\eta^{\bff}(d, \lambda) = \xi_{\sfa_{\bff}}^{-d} (\xi_{\sfa_{\bff}\fm_\tau}\omega_{\sfa_{\bff}\fm_\tau}^{\etabar})^{\sfa_{\bff}\fm_\tau (\inner{D_{i_3^{\bff}}, \beta} + dw_3^{\bff})}.
$$
When $\cL_\tau$ is inner, the twisting factors $\eps_\eta^{\bff_\pm}$ read
$$
    \eps_\eta^{\bff_+}(d, \lambda) = \xi_{\sfa_{\bff_+}}^{\sfa_{\bff_+}\{-\frac{d}{\sfa_{\bff_+}}\}} (\xi_{\sfa_{\bff_+}\fm_\tau}\omega_{\sfa_{\bff_+}\fm_\tau}^{\etabar})^{\sfa_{\bff_+}\fm_\tau \{\inner{D_{i_3^{\bff_+}}, \beta} + dw_3^{\bff_+}\}},
$$
$$
    \eps_\eta^{\bff_-}(d, \lambda) = \xi_{\sfa_{\bff_-}}^{\sfa_{\bff_-}\{-\frac{d}{\sfa_{\bff_-}}\}} (\xi_{\sfa_{\bff_-}\fm_\tau}\omega_{\sfa_{\bff_-}\fm_\tau}^{\etabar})^{- \sfa_{\bff_-}\fm_\tau \{\inner{D_{i_3^{\bff_-}}, \beta} + dw_3^{\bff_-}\}}.
$$
We have
$$
    (-1)^{\floor{\inner{D_{i_3^{\bff_+}}, \beta} + dw_3^{\bff_+}}+ \ceil{\frac{d}{\sfa_{\bff_+}}}} \eps_\eta^{\bff_+}(d, \lambda) = \xi_{\sfa_{\bff_+}}^{-d} (\xi_{\sfa_{\bff_+}\fm_\tau}\omega_{\sfa_{\bff_+}\fm_\tau}^{\etabar})^{\sfa_{\bff_+}\fm_\tau (\inner{D_{i_3^{\bff_+}}, \beta} + dw_3^{\bff_+})},
$$
$$
    (-1)^{\floor{\inner{D_{i_3^{\bff_-}}, \beta} + dw_3^{\bff_-}}+ \ceil{\frac{d}{\sfa_{\bff_-}}}} \eps_\eta^{\bff_-}(d, \lambda) = \xi_{\sfa_{\bff_-}}^{-d} (\xi_{\sfa_{\bff_-}\fm_\tau}\omega_{\sfa_{\bff_-}\fm_\tau}^{\etabar})^{\sfa_{\bff_-}\fm_\tau (- \inner{D_{i_3^{\bff_-}}, \beta} - dw_3^{\bff_-})}.
$$
Again note the extra sign in the expressions for $\bff_-$.

\subsection{Pullback of disk potentials under open-closed mirror map}
Now we compute the pullback of the disk potentials under the open-closed mirror map. The statements in this subsection are direct generalizations of the results in \cite[Section 4.2]{FLT22}. Let $\bfeta = (\tau, \eta) \in J_\Sigma$. When $\cL_\tau$ is outer, the A-model disk potential defined in \eqref{eqn:AmodelFgn} is
$$
    F_{0,1}(\btau; \tX_{\bfeta}) \coloneqq \sum_{\beta \in E(\cX)} \sum_{\mu \in \bZ_{> 0}} \sum_{\lambda \in G_\tau} \sum_{l \in \bZ_{\ge 0}} \frac{\inner{(\btau)^l}_{0, \beta, (\mu, \lambda)}}{l!} \eps_\eta^{\bff}(\mu, \lambda) \tX_{\bfeta}^{\mu}.
$$
When $\cL_\tau$ is inner, the A-model disk potential is
\begin{align*}
    F_{0,1}(\btau; \tX_{\bfeta}) \coloneqq & \sum_{\beta \in E(\cX)} \sum_{\mu \in \bZ_{> 0}} \sum_{\lambda \in G_\tau} \sum_{l \in \bZ_{\ge 0}} \frac{\inner{(\btau)^l}_{0, \beta, (\mu, \lambda)}}{l!} \eps_\eta^{\bff_+}(\mu, \lambda)  \tX_{\bfeta}^\mu\\
    & + \sum_{\beta \in E(\cX)} \sum_{\mu \in \bZ_{< 0}} \sum_{\lambda \in G_\tau} \sum_{l \in \bZ_{\ge 0}} \frac{\inner{(\btau)^l}_{0, \beta, (\mu, \lambda)}}{l!} \eps_\eta^{\bff_-}(-\mu, \lambda)  e^{-\mu\int_{l_\tau}\btau'} \tX_{\bfeta}^\mu.
\end{align*}

Under the open-closed mirror map, we may compute that
$$
    F_{0,1}(\btau; \tX_{\bfeta}) = W_{\bfeta}(q, \hX_{\bfeta})
$$
where the right-hand side is defined as
\begin{align*}
    W_{\bfeta}(q, \hX_{\bfeta}) & \coloneqq \sum_{(\beta, d) \in \bK_\tau} q^\beta \hX_{\bfeta}^{d} \frac{(-1)^{\floor{\inner{D_{i_3^{\bff}}, \beta} + dw_3^{\bff}}+ \ceil{\frac{d}{\sfa_{\bff}}}} \eps_\eta^{\bff}(d, \lambda(\beta, d)) }{\fm_\tau d (\inner{D_{i_1^{\bff}}, \beta} + dw_1^{\bff})! \prod_{i \in I_\sigma} \inner{D_i, \beta}!} \cdot \frac{\Gamma(-\inner{D_{i_3^{\bff}}, \beta} - dw_3^{\bff} )}{\Gamma(\inner{D_{i_2^{\bff}}, \beta} + dw_2^{\bff} + 1)}\\
    & = \sum_{(\beta, d) \in \bK_\tau} q^\beta \hX_{\bfeta}^{d} \frac{\xi_{\sfa_{\bff}}^{-d} (\xi_{\sfa_{\bff}\fm_\tau}\omega_{\sfa_{\bff}\fm_\tau}^{\etabar})^{\sfa_{\bff}\fm_\tau (\inner{D_{i_3^{\bff}}, \beta} + dw_3^{\bff})} }{\fm_\tau d (\inner{D_{i_1^{\bff}}, \beta} + dw_1^{\bff})! \prod_{i \in I_\sigma} \inner{D_i, \beta}!} \cdot \frac{\Gamma(-\inner{D_{i_3^{\bff}}, \beta} - dw_3^{\bff} )}{\Gamma(\inner{D_{i_2^{\bff}}, \beta} + dw_2^{\bff} + 1)}.
\end{align*}

In more detail, when $\cL_\tau$ is outer, the condition $\inner{D_{i_1^{\bff}}, \beta} + dw_1^{\bff} \in \bZ_{\ge 0}$ implies that $d >0$, and thus
$$
    \bK_\tau = \{(\beta, d) \in \bK_{\eff, \sigma} \times \bZ_{> 0} : \inner{D_{i_1^{\bff}}, \beta} + dw_1^{\bff} \in \bZ_{\ge 0}\}.
$$
On the other hand, when $\cL_\tau$ is inner, the subset of $\bK_\tau$ with $d>0$ is identified with
$$
    \bK_\tau^+ \coloneqq \{(\beta, d) \in \bK_{\eff, \sigma_+} \times \bZ_{>0} : \inner{D_{i_1^{\bff_+}}, \beta} + dw_1^{\bff_+} \in \bZ_{\ge 0}\}.
$$
The subset of $\bK_\tau$ with $d < 0$ is the image of the injective map
$$
    \bK_\tau^- \coloneqq \{(\beta, d) \in \bK_{\eff, \sigma_-} \times \bZ_{>0} : \inner{D_{i_1^{\bff_-}}, \beta} + dw_1^{\bff_-} \in \bZ_{\ge 0}\} \longrightarrow \bK_\tau, \qquad (\beta, d) \longmapsto (\beta + d[l_\tau] , -d).
$$
For $(\beta, d) \in \bK_\tau^\pm$, let $\lambda_\pm(\beta, d) \in G_\tau$ to be the unique element such that $v(\beta) = h_\pm(d, \lambda_\pm(\beta, d))$, where $h_\pm$ is taken for the flag $\bff_\pm$. Then the pullback of $F_{0,1}$ is 
\begin{align*}
    W_{\bfeta}(q, \hX_{\bfeta}) = & \sum_{(\beta, d) \in \bK_\tau^+} q^\beta \hX_{\bfeta}^d \frac{(-1)^{\floor{\inner{D_{i_3^{\bff_+}}, \beta} + dw_3^{\bff_+}} + \ceil{\frac{d}{\sfa_{\bff_+}}}} \eps_\eta^{\bff_+}(d, \lambda_+(\beta, d)) }{\fm_\tau d (\inner{D_{i_1^{\bff_+}}, \beta} + dw_1^{\bff_+})! \prod_{i \in I_{\sigma_+}} \inner{D_i, \beta}!} \cdot \frac{\Gamma(-\inner{D_{i_3^{\bff_+}}, \beta} - dw_3^{\bff_+})}{\Gamma(\inner{D_{i_2^{\bff_+}}, \beta} + dw_2^{\bff_+} + 1)}\\
    & + \sum_{(\beta, d) \in \bK_\tau^-} q^{\beta + d[l_\tau]} \hX_{\bfeta}^{-d} \frac{(-1)^{\floor{\inner{D_{i_3^{\bff_-}}, \beta} + dw_3^{\bff_-}} + \ceil{\frac{d}{\sfa_{\bff_-}}}} \eps_\eta^{\bff_-}(d, \lambda_-(\beta, d)) }{\fm_\tau d (\inner{D_{i_1^{\bff_-}}, \beta} + dw_1^{\bff_-})! \prod_{i \in I_{\sigma_-}} \inner{D_i, \beta}!} \cdot \frac{\Gamma(-\inner{D_{i_3^{\bff_-}}, \beta} - dw_3^{\bff_-})}{\Gamma(\inner{D_{i_2^{\bff_-}}, \beta} + dw_2^{\bff_-} + 1)}\\
    = & \sum_{(\beta, d) \in \bK_\tau^+} q^\beta \hX_{\bfeta}^d \frac{\xi_{\sfa_{\bff_+}}^{-d} (\xi_{\sfa_{\bff_+}\fm_\tau}\omega_{\sfa_{\bff_+}\fm_\tau}^{\etabar})^{\sfa_{\bff_+}\fm_\tau (\inner{D_{i_3^{\bff_+}}, \beta} + dw_3^{\bff_+})} }{\fm_\tau d (\inner{D_{i_1^{\bff_+}}, \beta} + dw_1^{\bff_+})! \prod_{i \in I_{\sigma_+}} \inner{D_i, \beta}!} \cdot \frac{\Gamma(-\inner{D_{i_3^{\bff_+}}, \beta} - dw_3^{\bff_+})}{\Gamma(\inner{D_{i_2^{\bff_+}}, \beta} + dw_2^{\bff_+} + 1)}\\
    & + \sum_{(\beta, d) \in \bK_\tau^-} q^{\beta + d[l_\tau]} \hX_{\bfeta}^{-d} \frac{\xi_{\sfa_{\bff_-}}^{-d} (\xi_{\sfa_{\bff_-}\fm_\tau}\omega_{\sfa_{\bff_-}\fm_\tau}^{\etabar})^{\sfa_{\bff_-}\fm_\tau (- \inner{D_{i_3^{\bff_-}}, \beta} - dw_3^{\bff_-})} }{\fm_\tau d (\inner{D_{i_1^{\bff_-}}, \beta} + dw_1^{\bff_-})! \prod_{i \in I_{\sigma_-}} \inner{D_i, \beta}!} \cdot \frac{\Gamma(-\inner{D_{i_3^{\bff_-}}, \beta} - dw_3^{\bff_-})}{\Gamma(\inner{D_{i_2^{\bff_-}}, \beta} + dw_2^{\bff_-} + 1)}.
\end{align*}
To derive the expression of $W_{\bfeta}$ above, for $(\beta, d) \in \bK_\tau^-$, we use the relations
$$
    \inner{D_{i_1^{\bff_+}}, \beta} = \inner{D_{i_1^{\bff_+}}, \beta + d[l_\tau]} - d w_1^{\bff_+}, \qquad 
    \inner{D_{i_1^{\bff_-}}, \beta} + dw_1^{\bff_-} = \inner{D_{i_1^{\bff_-}}, \beta + d[l_\tau]},
$$
$$
    \inner{D_{i_3^{\bff_-}}, \beta} + d w_3^{\bff_-} = \inner{D_{i_2^{\bff_+}}, \beta + d[l_\tau]} - d w_2^{\bff_+}, \qquad
    \inner{D_{i_2^{\bff_-}}, \beta} + d w_2^{\bff_-} = \inner{D_{i_3^{\bff_+}}, \beta + d[l_\tau]} - d w_3^{\bff_+},
$$
and $\inner{D_i, \beta} = \inner{D_i, \beta + d[l_\tau]}$ for $i \not \in \{i_1^{\bff_+}, i_1^{\bff_-}, i_2^{\bff_+}, i_3^{\bff_+}\}$. Then we have
\begin{align*}
    & \frac{\xi_{\sfa_{\bff_-}}^{-d} (\xi_{\sfa_{\bff_-}\fm_\tau}\omega_{\sfa_{\bff_-}\fm_\tau}^{\etabar})^{\sfa_{\bff_-}\fm_\tau (- \inner{D_{i_3^{\bff_-}}, \beta} - dw_3^{\bff_-})} }{\fm_\tau d (\inner{D_{i_1^{\bff_-}}, \beta} + dw_1^{\bff_-})! \prod_{i \in I_{\sigma_-}} \inner{D_i, \beta}!} \cdot \frac{\Gamma(-\inner{D_{i_3^{\bff_-}}, \beta} - dw_3^{\bff_-})}{\Gamma(\inner{D_{i_2^{\bff_-}}, \beta} + dw_2^{\bff_-} + 1)} \\
    & = (-1)^{1 + (\inner{D_{i_2^{\bff_+}}, \beta + d[l_\tau]} - d w_2^{\bff_+}) + (\inner{D_{i_3^{\bff_+}}, \beta + d[l_\tau]} - d w_3^{\bff_+})} \\
    & \qquad \cdot \frac{\xi_{\sfa_{\bff_+}}^{d} (\xi_{\sfa_{\bff_+}\fm_\tau}\omega_{\sfa_{\bff_+}\fm_\tau}^{\etabar})^{\sfa_{\bff_+}\fm_\tau (- \inner{D_{i_2^{\bff_+}}, \beta + d[l_\tau]} + d w_2^{\bff_+})} }{\fm_\tau d (\inner{D_{i_1^{\bff_+}}, \beta + d[l_\tau]} - dw_1^{\bff_+})! \prod_{i \in I_{\sigma_+}} \inner{D_i, \beta + d[l_\tau]}!} \cdot \frac{\Gamma(-\inner{D_{i_3^{\bff_+}}, \beta + d[l_\tau]} + dw_3^{\bff_+})}{\Gamma(\inner{D_{i_2^{\bff_+}}, \beta + d[l_\tau]} - dw_2^{\bff_+} + 1)} \\
    & = \frac{\xi_{\sfa_{\bff_+}}^{d} (\xi_{\sfa_{\bff_+}\fm_\tau}\omega_{\sfa_{\bff_+}\fm_\tau}^{\etabar})^{\sfa_{\bff_+}\fm_\tau (\inner{D_{i_3^{\bff_+}}, \beta + d[l_\tau]} - d w_3^{\bff_+})} }{\fm_\tau (-d) (\inner{D_{i_1^{\bff_+}}, \beta + d[l_\tau]} - dw_1^{\bff_+})! \prod_{i \in I_{\sigma_+}} \inner{D_i, \beta + d[l_\tau]}!} \cdot \frac{\Gamma(-\inner{D_{i_3^{\bff_+}}, \beta + d[l_\tau]} + dw_3^{\bff_+})}{\Gamma(\inner{D_{i_2^{\bff_+}}, \beta + d[l_\tau]} - dw_2^{\bff_+} + 1)} 
\end{align*}
which is the contribution of $(\beta + d[l_\tau], -d) \in \bK_\tau$.

\subsection{Local expansions on the mirror curve}\label{sect:LocalExpansion}
We now obtain the functions $W_{\bfeta}$ from local expansions of coordinates on the mirror curve and thereby establish the disk mirror theorem. Let $\bfeta = (\tau, \eta) \in J_\Sigma$. Let $\bff$ be the unique choice when $\cL_\tau$ is outer and $\bff = \bff_+$ when $\cL_\tau$ is inner. Locally on $U_{\bfeta}$, we assume that
\begin{equation}\label{eqn:LogY}
    \log Y_{\bff} = - \frac{\pi\sqrt{-1}}{\fm_\tau} + \frac{2\pi\sqrt{-1}}{\fm_\tau} \etabar + \frac{v(q, \hX_{\bff, \eta})}{\fm_\tau}
\end{equation}
where $v$ is a power series in $q$ and power/Laurent series in $\hX_{\bff, \eta}$. We choose an $\sfa_{\bff}$-th root of $Y_{\bff}$ such that
$$
    \log Y_{\bff}^{1/\sfa_{\bff}} = - \frac{\pi\sqrt{-1}}{\sfa_{\bff}\fm_\tau} + \frac{2\pi\sqrt{-1}}{\sfa_{\bff}\fm_\tau} \etabar + \frac{v(q, \hX_{\bff, \eta})}{\sfa_{\bff}\fm_\tau}.
$$
To determine the term $v(q, \hX_{\bff, \eta})$, we express the coordinates $(X_{\bff}, Y_{\bff})$ in terms of $(\hX_{\bff, \eta}, Y_{\bff}^{1/\sfa_{\bff}})$ as
$$
    X_{\bff} = \xi_{\sfa_{\bff}}^{-1} \hX_{\bff, \eta} (Y_{\bff}^{1/\sfa_{\bff}})^{-\sfb_{\bff}} , \qquad Y_{\bff} = (Y_{\bff}^{1/\sfa_{\bff}})^{\sfa_{\bff}}.
$$
Then, the mirror curve equation with respect to the flag $\bff$ is
$$
    0 = H_{\bff}(\hX_{\bff, \eta}, Y_{\bff}^{1/\sfa_{\bff}}, q) = \sum_{i=1}^{3+\fp} a_i^\si(q) \xi_{\sfa_{\bff}}^{-m_i^{\bff}} \hX_{\bff, \eta}^{m_i^{\bff}} (Y_{\bff}^{1/\sfa_{\bff}})^{-\sfb_{\bff} m_i^{\bff} + \sfa_{\bff} n_i^{\bff}}.
$$
We apply the following result.

\begin{lemma}[{\cite[Lemma 4.7]{FLT22}}] \label{lem:47}
The solution $v$ to the equation
$$
    1 - e^v + \sum_{a = 0}^k t_a e^{r_a v} = 0
$$  
around $t_0 = \cdots = t_k = 0, v= 0$ is in the following power series form
$$
    v(t_0, \dots, t_k) = \sum_{\substack{l_0, \dots, l_k \in \bZ_{\ge 0} \\ (l_0, \dots, l_k) \neq \vec{0}}} \frac{(-1)^{l_0 + \cdots + l_k - 1}\Gamma((l_0 + \cdots + l_k) - (r_0l_0 + \cdots + r_k l_k))}{l_0! \cdots l_k! \Gamma(1 - (r_0l_0 + \cdots + r_k l_k))} t_0^{l_0} \cdots t_k^{l_k}.
$$
\end{lemma}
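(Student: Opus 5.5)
The plan is Lagrange inversion, carried out with $t_0,\dots,t_k$ treated as small parameters. Since all $t_a$ vanish at the base point, the implicit function theorem applied to $G(v,t)=1-e^v+\sum_a t_a e^{r_a v}$ gives a unique analytic solution $v(t)$ with $v(0)=0$; indeed $\partial_v G(0,0)=-1\neq 0$. It then suffices to compute the Taylor coefficients of $v$. First write $w=e^v-1$, so that the equation becomes
$$
w=\sum_a t_a (1+w)^{r_a}=:\phi(w).
$$
This is a fixed-point equation for which the multivariate Lagrange inversion formula (in the form for $w=\sum_a t_a\phi_a(w)$, equivalently the one-variable formula applied to $w = s\,\Phi(w)$ with $\Phi(w)=\sum_a t_a(1+w)^{r_a}$ and $s=1$ formal) applies.

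For a function $F$ of the solution, Lagrange's formula gives
$$
[s^n]F(w)=\tfrac1n[w^{n-1}]F'(w)\Phi(w)^n .
$$
Take $F(w)=\log(1+w)=v$, so $F'=(1+w)^{-1}$. Expanding $\Phi^n$ by the multinomial theorem, each monomial $t_0^{l_0}\cdots t_k^{l_k}$ with $\sum l_a=n$ has coefficient $\tfrac{n!}{l_0!\cdots l_k!}$ times $(1+w)^{R}$, where $R=\sum r_al_a$. Hence the coefficient of $t^{l}$ in $v$ is
$$
\frac{(n-1)!}{l_0!\cdots l_k!}\,[w^{n-1}](1+w)^{R-1}=\frac{(n-1)!}{l_0!\cdots l_k!}\binom{R-1}{n-1}.
$$

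Next rewrite the binomial with Gamma functions. We have $\binom{R-1}{n-1}=\frac{(R-1)(R-2)\cdots(R-n+1)}{(n-1)!}$, and by the reflection and shift identities
$$
(R-1)\cdots(R-n+1)=(-1)^{n-1}(1-R)(2-R)\cdots(n-1-R)=(-1)^{n-1}\frac{\Gamma(n-R)}{\Gamma(1-R)}.
$$
Substituting yields exactly
$$
\frac{(-1)^{n-1}\Gamma(n-R)}{l_0!\cdots l_k!\,\Gamma(1-R)},
$$
which is the claimed coefficient. When $R$ is such that $\Gamma(1-R)$ has a pole (for example $R$ a positive integer), the expression is interpreted as the polynomial $\prod_{j=1}^{n-1}(j-R)$ divided by the factorials, and this agrees with the binomial. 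I regard this as a formal identity in the $r_a$ that persists for all values.

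The main point requiring care is justifying multivariate Lagrange inversion when the $r_a$ are rational rather than integers, so that $(1+w)^{r_a}$ is only a formal binomial series. I would handle this by introducing the single auxiliary variable $s$ as above and applying the standard one-variable Lagrange–Bürmann formula over the coefficient ring $\mathbb{C}[t_0,\dots,t_k]$; in that ring $\Phi(0)=\sum_a t_a$ need not be invertible, so I would instead apply the formula over the field of fractions, or verify it directly as an identity of formal power series via the residue argument. The residue argument uses $\operatorname{Res}_w F'(w)\Phi(w)^n w^{-n}\,dw$ and changes variables to $u=w/\Phi(w)$, which is legitimate in $\mathbb{C}[[t]][[w]]$ after homogenizing $t_a\mapsto st_a$. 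Finally, setting $s=1$ is allowed because the coefficient of $s^n$ is homogeneous of degree $n$ in $t$, so the sum converges $t$-adically.
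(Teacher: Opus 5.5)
Your proof is correct and complete. The substitution $w=e^v-1$, the homogenized Lagrange--B\"urmann formula applied to $F(w)=\log(1+w)$, the multinomial expansion of $\Phi(w)^n$, and the rewriting $(n-1)!\binom{R-1}{n-1}=(-1)^{n-1}\Gamma(n-R)/\Gamma(1-R)$ all check out. As a sanity check, $r_0=0$ recovers $\log(1+t_0)$, and $r_0=1$ recovers $-\log(1-t_0)$ under your polynomial reading of the Gamma ratio.

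The paper gives no proof of this lemma; it quotes it from \cite{FLT22}. So there is no in-paper argument to compare yours against, and Lagrange inversion is the natural self-contained route.

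One point needs tightening. The residue argument via the change of variables $u=w/\Phi(w)$ is not literally valid in $\bC[[t]][[w]]$, even after homogenizing. The reason is that $\Phi(0)=\sum_a t_a$ is not a unit there. Either of the following closes the gap:
\begin{itemize}
\item Use the fraction-field version you also mention. Over $\bC(t_0,\dots,t_k)$ the element $\sum_a t_a$ is invertible, and the recursion for $w=s\Phi(w)$ shows directly that the coefficient of $s^n$ lies in $\bC[t]$ and is homogeneous of degree $n$.
\item Note that $[s^n]F(w(s))=\tfrac1n[w^{n-1}]F'(w)\Phi(w)^n$ is a universal polynomial identity in the coefficients of $F$ and $\Phi$. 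It therefore holds with no invertibility hypothesis on $\Phi(0)$.
\end{itemize}
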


For $i = 1, \dots, 3 + \fp$, set 
$$
    t_i = a_i^\si(q) \xi_{\sfa_{\bff}}^{-m_i^{\bff}} (\xi_{\sfa_{\bff}\fm_\tau}\omega_{\sfa_{\bff}\fm_\tau}^{\etabar})^{-\sfb_{\bff} m_i^{\bff} + \sfa_{\bff} n_i^{\bff}} \hX_{\bff, \eta}^{m_i^{\bff}}, \qquad 
    r_i = \frac{-\sff_{\bff} m_i^{\bff} + n_i^{\bff}}{\fm_\tau}.
$$
For $i \in I'_\tau$ we have
$$
    t_{i_3^{\bff}} = 1, \quad r_{i_3^{\bff}} = 0, \qquad t_{i_2^{\bff}} = -1, \quad r_{i_2^{\bff}} = 1.
$$
With these coefficients, the mirror curve equation with respect to $\bff$ is written as
$$
    0 = 1 - e^v + \sum_{i \in I_\tau} t_i e^{r_i v}.
$$
By Lemma \ref{lem:47}, we find
$$
    v = \sum_{\substack{l_i \in \bZ_{\ge 0} \\ (l_i)_{i \in I_\tau} \neq \vec{0}}}  \frac{(-1)^{-1 + \sum_{i \in I_\tau} l_i} \Gamma(\sum_{i \in I_\tau} l_i(1 - r_i))}{\prod_{i \in I_\tau} l_i! \Gamma(1 - \sum_{i \in I_\tau} r_il_i)} \prod_{i \in I_\tau} t_i^{l_i}.
$$
A class $(\beta, d) \in \bK_\tau$ corresponds to the term in the above where
$$
    l_{i_1^{\bff}} = \inner{D_{i_1^{\bff}}, \beta} + dw_1^{\bff}, \qquad \qquad 
    l_i = \inner{D_i, \beta}, \quad i \in I_\sigma.
$$
We compute
$$
    \sum_{i \in I_\tau} r_il_i = - \inner{D_{i_2^{\bff}}, \beta} -dw_2^{\bff} , \qquad 
    \sum_{i \in I_\tau} l_i(1 - r_i) = - \inner{D_{i_3^{\bff}}, \beta} - dw_3^{\bff},
$$
$$
    (-1)^{-1 + \sum_{i \in I_\tau} l_i} \prod_{i \in I_\tau} t_i^{l_i} = - q^\beta \hX_{\bff, \eta}^d  \xi_{\sfa_{\bff}}^{-d} (\xi_{\sfa_{\bff}\fm_\tau}\omega_{\sfa_{\bff}\fm_\tau}^{\etabar})^{ \sfa_{\bff} \fm_\tau(\inner{D_{i_3^{\bff}}, \beta} +dw_3^{\bff} )}.
$$
Note that there are additional terms in $v$ that would arise from classes of form $(\beta, 0)$ which are excluded from $\bK_\tau$, but these terms have no $\hX_{\bff, \eta}$-dependence. It follows that
$$
   \left(\hX_{\bfeta} \frac{\partial}{\partial \hX_{\bfeta}} \right)^2 W_{\bfeta}(q, \hX_{\bfeta}) = -  \hX_{\bfeta} \frac{\partial}{\partial \hX_{\bfeta}} \log Y_{\bff} \big|_{U_{\bfeta}}.
$$
Therefore, we arrive at the following conclusion.

\begin{theorem}\label{thm:DiskExpansion}
For any $\bfeta \in J_\Sigma$, we have
$$
    \left(\tX_{\bfeta} \frac{\partial}{\partial \tX_{\bfeta}} \right)^2 F_{0,1}(\btau; \tX_{\bfeta}) = - \hX_{\bfeta} \frac{\partial}{\partial \hX_{\bfeta}} \log Y_{\bff} \big|_{U_{\bfeta}}
$$
under the open-closed mirror map $\btau = \btau(q)$, $\tX_{\bfeta} = \tX_{\bfeta}(q, \hX_{\bfeta})$.
\end{theorem}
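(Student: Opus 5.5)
The proof combines two identities established just above: the equality $F_{0,1}(\btau;\tX_{\bfeta}) = W_{\bfeta}(q,\hX_{\bfeta})$ under the open-closed mirror map, and the identity $\bigl(\hX_{\bfeta}\partial_{\hX_{\bfeta}}\bigr)^2 W_{\bfeta} = -\hX_{\bfeta}\partial_{\hX_{\bfeta}}\log Y_{\bff}|_{U_{\bfeta}}$ obtained from Lemma \ref{lem:47}. The remaining work is to justify both identities and to replace $\tX_{\bfeta}\partial_{\tX_{\bfeta}}$ by $\hX_{\bfeta}\partial_{\hX_{\bfeta}}$.

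That last replacement is immediate. By the open mirror map,
\[
\log \tX_{\bfeta} = \log \hX_{\bfeta} + \sum_{j} w_j^{\bff} A_{i_j^{\bff}}(q),
\]
and the correction term depends only on $q$. Hence at fixed $q$ the operators $\tX_{\bfeta}\partial_{\tX_{\bfeta}}$ and $\hX_{\bfeta}\partial_{\hX_{\bfeta}}$ coincide. Since $\btau=\btau(q)$ is also independent of $\hX_{\bfeta}$, it therefore suffices to prove $F_{0,1}=W_{\bfeta}$ up to terms independent of $\hX_{\bfeta}$, together with the $\log Y_{\bff}$ identity.

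\textbf{Step 1: $F_{0,1}=W_{\bfeta}$.} I would follow \cite[Section 4.2]{FLT22}. First, evaluate the disk invariants through the localization formula \eqref{eqn:OGWLocalize} and \eqref{eqn:AmodelDiskCanonical}: $F_{0,1}$ is a pairing of $J_{\bT'}(\btau,z)$ with $\iota_{\sigma,*}\one_{h^{-1}}$ at $z=\su_1^{\bff}/\mu$, multiplied by the disk factor $D_{\bff,\mu,\lambda}$ and the twisting factor. Second, substitute the mirror theorem $J_{\bT'}=I_{\bT'}$ and localize $I_{\bT'}$ at $\fp_\sigma$; this gives a sum over $\beta\in\bK_{\eff,\sigma}$ of Gamma ratios. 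Third, combine the exponential prefactor $e^{\sum_a \Hbar_a^{\bT'}\log q_a/z}$ with the $\btau_0$ and $\btau'$ parts of the mirror map at $z=\su_1^{\bff}/d$; these turn $\tX^{d}$ into $q^{\beta}\hX^{d}$ times $e^{d\sum_j w_j A_{i_j}}$, which is exactly the open mirror map shift. Fourth, combine the disk factor with the Gamma ratios to produce the summand of $W_{\bfeta}$.

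The new ingredient relative to \cite{FLT22} is fractional framing. Here I must check two things. One is that the modified convention for $h(d,\lambda)$ in Remark \ref{rem:FLTLambdaConvention}, with the fractional parts $\{\pm d\sff_{\bff}/\fm_\tau-\lambdabar/\fm_\tau\}$, matches the $\epsilon_2,\epsilon_3$ appearing in the disk factor. The other is that the sign $(-1)^{\lfloor\cdot\rfloor+\lceil d/\sfa_{\bff}\rceil}$, combined with $\eps^{\bff}_\eta$, yields the root-of-unity expression recorded above. For inner branes I would treat the $\mu<0$ terms using the reindexing $\bK_\tau^-\to\bK_\tau$ and the degree identities of $[l_\tau]$ already computed in the paper.

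\textbf{Step 2: the $\log Y_{\bff}$ expansion.} The branch choices \eqref{eqn:LogY} put the mirror curve equation in the form of Lemma \ref{lem:47}. I then match the monomials of the resulting series $v$ with classes $(\beta,d)\in\bK_\tau$ via $l_{i_1}=\inner{D_{i_1},\beta}+dw_1$ and $l_i=\inner{D_i,\beta}$ for $i\in I_\sigma$. Applying $\hX\partial_{\hX}$ to $\log Y_{\bff}=\cdots+v/\fm_\tau$ brings down a factor $d$, whereas $(\hX\partial_{\hX})^2W_{\bfeta}$ multiplies the $\frac{1}{\fm_\tau d}$ in each summand of $W_{\bfeta}$ by $d^2$. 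The prefactors therefore agree, and the Gamma factors and roots of unity coincide by the computations of $\sum r_il_i$, $\sum l_i(1-r_i)$ and $\prod t_i^{l_i}$. Terms with $d=0$ drop out after differentiation.

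\textbf{Main obstacle.} I expect the hardest part to be Step 1's bookkeeping of phases for fractional $\sff_{\bff}$ and for the $\bff_-$ half of an inner brane. This requires verifying that the $\sfa_{\bff}$-th root branch in $\hX_{\bff,\eta}$ and the $\fm_\tau$-th root determined by $\etabar$ combine into exactly the twisting factor $\eps_\eta^{\bff}$, with no residual $\sfa_{\bff}$-th roots of unity. I would check this directly by comparing exponents modulo $\sfa_{\bff}\fm_\tau$, using the integrality relations for $\inner{D_{i_2},\beta}+dw_2$ and $\inner{D_{i_3},\beta}+dw_3$ stated above.
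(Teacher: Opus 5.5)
Your proposal is correct and follows the paper's own route. The paper asserts $F_{0,1}(\btau;\tX_{\bfeta}) = W_{\bfeta}(q,\hX_{\bfeta})$ as a direct generalization of \cite[Section 4.2]{FLT22}, using the $\bK_\tau^-$ reindexing for the negative-degree part of inner branes; it then expands $\log Y_{\bff}$ on $U_{\bfeta}$ via Lemma \ref{lem:47}, matches monomials with $(\beta,d)\in\bK_\tau$ while discarding the $d=0$ terms, and concludes using $\tX_{\bfeta}\partial_{\tX_{\bfeta}} = \hX_{\bfeta}\partial_{\hX_{\bfeta}}$. One small slip: the relevant fractional parts are $\{d\sff_{\bff}/\fm_\tau+\lambdabar/\fm_\tau\}$ and $\{-d\sff_{\bff}/\fm_\tau-\lambdabar/\fm_\tau\}$, so $\lambdabar$ enters with the same sign as $d\sff_{\bff}$ in both.
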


Theorem \ref{thm:DiskExpansion} is a generalization of \cite[Section 4.4]{FL13}, \cite[Theorem 4.5]{FLT22} to the case of fractional framings. Note that the expression for the open mirror map implies that $\tX_{\bfeta} \frac{\partial}{\partial \tX_{\bfeta}} = \hX_{\bfeta} \frac{\partial}{\partial \hX_{\bfeta}}$.


\section{All-genus open mirror symmetry}\label{sect:AllGenus}
In this section, we review the Chekhov-Eynard-Orantin topological
recursion on the mirror curve which provides the higher-genus B-model. Moreover, we prove all-genus mirror symmetry which retrieves the all-genus open Gromov-Witten potentials from the topological recursion invariants on the mirror curve (Theorem \ref{thm:AllGenusMirror}).

\subsection{Differentials of the second kind}
The main reference of this subsection is \cite{Fay73}. On the compactified mirror curve $\Cbar_q$, let $A_1,\dots, A_\fg, B_1,\dots, B_\fg \in H_1(\Cbar_q;\bC)$ be the choice of A- and B-cycles in \cite[Section 5.9]{flz2020remodeling} which form a symplectic basis of $(H_{1}(\Cbar_q;\bC), \cap)$, where $\cap$ denotes the intersection pairing. Let $B$ be the \emph{fundamental bidifferential of the second kind} on $\Cbar_q$ normalized by the A-cycles $A_1,\dots, A_\fg$. It is also referred to as the Bergman kernel in \cite{EO07,EO15}.

Let $\bsi \in I_\Si$. Following \cite{Eynard11, EO15}, let
$
    \zeta_{\bsi} =\sqrt{\hx-\check{u}^{\bsi}}
$
be a local holomorphic coordinate near the critical point $p_{\bsi} \in \Crit$. Around $p_{\bsi}$, we have
$$
  \hx = \check{u}^{\bsi} +\zeta_\bsi^2, \qquad 
  \hy = \check{v}^{\bsi} +\sum_{k \in \bZ_{\ge 1}} h^{\bsi}_k \zeta_\bsi^k
$$
where
$$
  h_1^{\bsi} = \sqrt{ \frac{2}{\frac{d^2\hat{x}}{d\hat{y}^2}(p_{\bsi}) } }. 
$$
For $k \in \bZ_{\ge 0}$, define
$$
    \theta_{\bsi}^k(p)\coloneqq -(2k-1)!! 2^{-k}\Res_{p'\to p_{\bsi}} B(p,p')\zeta_\bsi^{-2k-1}
$$
which is a differential of second kind on $\Cbar_q$. Specifically, it has a single pole of order $2k+2$ at $p_{\bsi}$. For $k \in \bZ_{\ge 1}$, define
$$
  \hxi_{\bsi}^k \coloneqq (-1)^k \left(\frac{d}{d\hx} \right)^{k-1} \frac{\theta_{\bsi}^0}{d\hx}
$$
which is a meromorphic function on $\Cbar_q$ with poles only at $\Crit$. Moreover, define
$$
  \htheta_{\bsi}^0 \coloneqq \theta_{\bsi}^0, \qquad \qquad \htheta_{\bsi}^k \coloneqq d \hxi_{\bsi}^k, \quad k \in \bZ_{\ge 1}.
$$
Note that the definitions of $\theta_{\bsi}^k$, $\hxi_{\bsi}^k$, $\htheta_{\bsi}^k$ do not depend on the choice of the reference flag $\bff_0$.

Let
$$
    \theta_{\bsi}(z) \coloneqq \sum_{k \in \bZ_{\ge 0}} \theta_{\bsi} z^k, \qquad \htheta_{\bsi}(z) \coloneqq \sum_{k \in \bZ_{\ge 0}} \htheta_{\bsi} z^k.
$$
By \cite[Proposition 6.6]{flz2020remodeling}, we have
$$
    \theta_{\bsi}(z) = \sum_{\bsi' \in I_\Sigma} \chR_{\bsi'}^{\spa \bsi}(z) \htheta_{\bsi'}(z)
$$
where $(\chR_{\bsi'}^{\spa \bsi})$ is the \emph{B-model $R$-matrix}. It is related to the A-model $R$-matrix in Section \ref{sect:AmodelRmatrix} by
$$
    R_{\bsi'}^{\spa \bsi}(z) = \chR_{\bsi'}^{\spa \bsi}(-z)
$$
\cite[Theorem 7.1]{flz2020remodeling} and we have
$$
    \theta_{\bsi}(z) = \sum_{\bsi' \in I_\Sigma} R_{\bsi'}^{\spa \bsi}(-z) \htheta_{\bsi'}(z).
$$

\subsection{Topological recursion}\label{sect:TR}
Let $\omega_{g,n}$ be defined recursively by the Chekhov-Eynard-Orantin topological recursion \cite{EO07} as follows:
$$
    \omega_{0,1}= \Phi = \hy d\hx,\qquad  \omega_{0,2}=B(p_1,p_2),
$$
and when $2g-2+n>0$,
\begin{align*}
\omega_{g,n}(p_1,\dots, p_n) = & \sum_{\bsi\in I_\Si}\Res_{p \to p_\bsi}
\frac{\int_{\xi = p}^{\bar{p}} B(p_n,\xi)}{2(\Phi(p)-\Phi(\bar{p}))}
\Bigl( \omega_{g-1,n+1}(p,\bar{p},p_1,\dots, p_{n-1}) \\
& + \sum_{g_1+g_2=g}
\sum_{ \substack{ I\cup J=\{1,..., n-1\} \\ I\cap J =\emptyset } }' \omega_{g_1,|I|+1} (p,p_I)\omega_{g_2,|J|+1}(\bar{p},p_J) \Bigr)
\end{align*}
where in the summation $\sum_{g_1, g_2} \sum_{I, J}'$, any term with $(g_1 = 0, I = \emptyset)$ or $(g_2 = 0, J = \emptyset)$ is excluded. We note that, for $(g,n) = (0,2)$, the fundamental bidifferential $\omega_{0,2}$ only has a double pole of order 2 along the diagonal, with biresidue $1$. For $2g-2+n > 0$, the only poles of $\omega_{g,n}$ (in any of the variables) are at $\Crit$.

\subsection{B-model graph sum}

We now state the graph sum formula of \cite{DOSS14} for $\omega_{g,n}$. See Section \ref{sect:AmodelDescendantGraph} for the definition of labeled graphs and the A-model graph sum. We introduce the following notation:
\begin{itemize}
\item  For $\bsi\in I_{\Sigma}$, we define
$$
  \check{h}^{\bsi}_{k} \coloneqq \frac{(2k-1)!!}{2^{k-1}}h^\bsi_{2k-1}. 
$$

\item For any $\bsi,\bsi'\in I_\Sigma$, we expand
$$
B(p_1,p_2) =\left( \frac{\delta_{\bsi,\bsi'}}{ (\zeta_\bsi-\zeta_{\bsi'})^2}
+ \sum_{k,l\in \bZ_{\geq 0}} B^{\bsi,\bsi'}_{k,l} \zeta_\bsi^k \zeta_{\bsi'}^l \right) d\zeta_\bsi d\zeta_{\bsi'}
$$
near $p_1=p_{\bsi}$ and $p_2=p_{\bsi'}$, and define
$$
  \check{B}^{\bsi,\bsi'}_{k,l} \coloneqq \frac{(2k-1)!! (2l-1)!!}{2^{k+l+1}} B^{\bsi,\bsi'}_{2k,2l}.
$$


\end{itemize}

Given a labeled graph $\vGa \in \bGa_{g,n}(\cX)$ with
$L^o(\Ga)=\{l_1,\dots,l_n\}$, 
we define its \emph{B-model weight} to be
\begin{align*}
w_B^{\bu}(\vGa) =& (-1)^{g(\vGa)-1}\prod_{v\in V(\Gamma)} \left(\frac{h^{\bsi(v)}_1}{\sqrt{-2}}\right)^{2-2g-\val(v)} \left\langle \prod_{h\in H(v)} \tau_{k(h)} \right\rangle_{g(v)}
\prod_{e\in E(\Gamma)} \check{B}^{\bsi(v_1(e)),\bsi(v_2(e))}_{k(h_1(e)),k(h_2(e))}  \\
& \cdot \prod_{l\in \cL^1(\Gamma)}(\check{\cL}^1)^{\bsi(l)}_{k(l)}
\prod_{i=1}^n (\check{\cL}^\bu)^{\bsi(l_i)}_{k(l_i)}(l_i)
\end{align*}
where
\begin{itemize}
\item (dilaton leaf)
$$
(\check{\cL}^1)^{\bsi}_k \coloneqq \frac{-1}{\sqrt{-2}}\check{h}^{\bsi}_k;
$$
\item (ordinary leaf)
$$
    (\check{\cL}^\bu)^{\bsi}_k(l_i) \coloneqq  \frac{1}{\sqrt{-2}} \theta_{\bsi}^k(p_i) = [z^k] \sum_{\brho \in I_\Sigma}  \frac{\htheta_{\brho}(z)}{\sqrt{-2}} \chR_{\brho}^{\spa \bsi}(z).
$$
\end{itemize}
Then, we have the following graph sum formula.


\begin{theorem}[{\cite[Theorem 3.7]{DOSS14}}] \label{thm:DOSS}
For $2g-2+n>0$, we have
$$
    \omega_{g,n} = \sum_{\vGa \in \bGa_{g,n}(\cX)}\frac{w_B^{\bu}(\vGa)}{|\Aut(\vGa)|}.
$$
\end{theorem}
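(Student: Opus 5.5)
The plan is to avoid unwinding the recursion term by term. Instead I would use the characterization of $\omega_{g,n}$, $2g-2+n>0$, as the \emph{unique} family of symmetric meromorphic multidifferentials on $\Cbar_q^n$ such that:
\begin{itemize}
\item[(a)] they have poles only at $\Crit$;
\item[(b)] their residues against every holomorphic one-form vanish, i.e.\ they are normalized on the A-cycles $A_1,\dots,A_\fg$;
\item[(c)] they satisfy the linear and quadratic loop equations at each $p_\bsi$ with respect to the local involution $\zeta_\bsi\mapsto-\zeta_\bsi$.
\end{itemize}
Uniqueness follows from the recursion formula of Section \ref{sect:TR}. Taking the residue at $p_\bsi$ against the kernel $\int_{p}^{\bar p}B(p_n,\cdot)$ projects onto the polar part. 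Conditions (a) and (b) then reconstruct the whole differential from its polar parts via $B$.

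With this characterization, I would argue as follows. Every $\theta^k_\bsi$ is built from $\Res B(p,p')\zeta_\bsi^{-2k-1}$, so it has poles only at $p_\bsi$ and vanishing A-periods. Hence the graph sum $\sum_{\vGa}w_B^{\bu}(\vGa)/|\Aut(\vGa)|$ is a polynomial in the $\theta^k_\bsi(p_i)$ with constant coefficients, and it automatically satisfies (a), (b) and symmetry. Moreover $\theta^k_\bsi$ is odd in $\zeta_\bsi$ to leading order. So the linear loop equation, which requires the sum over the two sheets to be holomorphic at $p_\bsi$, holds because each ordinary leaf expands near $p_\bsi$ as $d(\zeta_\bsi^{-2k-1})$ plus a regular term. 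Only the quadratic loop equation remains.

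For the quadratic loop equation, I would expand everything near a fixed $p_\bsi$ in $\zeta=\zeta_\bsi$. Writing $\theta^k_\brho(p)$ near $p_\bsi$ as the singular term plus $\sum_l B^{\bsi,\brho}$-type Taylor coefficients, the condition becomes an identity among local generating functions. The claim is that it reduces, vertex by vertex, to the Witten--Kontsevich case. That case is the curve $x=\zeta^2$, $y=\zeta$ with the standard $B$, where the loop equations are equivalent to the Virasoro constraints for $\langle\tau_{k_1}\cdots\tau_{k_m}\rangle_g$. I take this as the base input.

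The general graph sum would be matched to this base case as follows. The edge factors $\check B^{\bsi,\bsi'}_{k,l}$ are exactly the even Taylor coefficients of the regular part of $B$. The vertex normalizations $(h^\bsi_1/\sqrt{-2})^{2-2g-\val}$ come from rescaling $\zeta$ so that $\Phi(p)-\Phi(\bar p)$ has leading term $2h_1\zeta\,d(\zeta^2)$. The dilaton leaves $\check h^\bsi_k$ absorb the higher odd coefficients $h^\bsi_{2k-1}$ of $\hy$, which appear when $\Phi(p)-\Phi(\bar p)$ is expanded beyond leading order, by the dilaton equation/translation action. Concretely, the quadratic loop equation at $p_\bsi$ should decompose as a sum over the ways of cutting a graph at a vertex marked $\bsi$: either removing a vertex (genus reduction, or splitting with an edge). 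This mirrors Givental's decomposition $R$-action $\circ$ translation applied to the Witten--Kontsevich partition function.

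The main obstacle is this last bookkeeping step: showing that the non-leading terms of $1/(\Phi(p)-\Phi(\bar p))$, i.e.\ the $h_{k}$ for $k\ge 2$, are exactly compensated by the dilaton leaves with the stated weights $\check h_k$ and signs $(-1)^{g(\vGa)-1}$. Only the odd coefficients should enter, since even ones cancel under $p\mapsto\bar p$. I would first settle this in the one-vertex case ($\fg=0$, a single critical point). There the statement is the known equivalence between topological recursion on a local spectral curve and the translated Witten--Kontsevich tau-function. I would then promote it to arbitrary graphs by induction on $2g-2+n$. The only new ingredient in the induction is the edge propagator coming from $B^{\bsi,\bsi'}_{2k,2l}$.
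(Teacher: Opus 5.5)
The paper does not prove this statement; it quotes it from \cite[Theorem 3.7]{DOSS14}. Your framework is reasonable. Characterizing $\{\omega_{g,n}\}$ by pole structure, $A$-normalization and the linear and quadratic loop equations is the Borot--Eynard--Orantin theorem. Your check of (a), (b), symmetry and the linear loop equation for the graph sum is also correct. Near $p_{\bsi}$, the only polar part of a leaf $\theta^k_{\brho}$ is $\delta_{\brho,\bsi}$ times a constant multiple of $\zeta_{\bsi}^{-2k-2}d\zeta_{\bsi}$, and this is anti-invariant under $\zeta_{\bsi}\mapsto-\zeta_{\bsi}$. One minor correction: (b) must literally be the vanishing of $A$-periods. ``Residues against every holomorphic one-form vanish'' is a different condition that every holomorphic differential satisfies, so it does not fix the holomorphic parts that uniqueness needs.

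The genuine gap is the quadratic loop equation, which carries the whole content of the theorem and is not proved. For the graph sum, the quadratic expression at $p_{\bsi}$ contains four kinds of terms:
(i) products of polar parts of leaves at \emph{different} $\bsi$-vertices;
(ii) products of a polar leaf with the holomorphic part of some $\theta^k_{\brho}$ at $p_{\bsi}$, whose Taylor coefficients are $B^{\bsi,\brho}_{l,2k}$ with $l$ of both parities;
(iii) the expansions of $B(p,p_j)$ at $p_{\bsi}$;
(iv) the corrections $h^{\bsi}_{2k+1}$, $k\ge 1$, coming from $\Phi(p)-\Phi(\bar p)$.
All poles and the $\zeta_{\bsi}^0$-coefficient must cancel. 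To show this, you would apply the Witten--Kontsevich (DVV) relation at the vertex carrying the polar leaf, treating its edge half-edges and dilaton leaves as extra insertions. You would then have to check that the resulting split and genus-reduced configurations reproduce exactly the remaining terms. That check must include the precise weights $\check{B}^{\bsi,\bsi'}_{k,l}$, $\check{h}^{\bsi}_k$, $(h^{\bsi}_1/\sqrt{-2})^{2-2g(v)-\val(v)}$, the sign $(-1)^{g(\vGa)-1}$ and the automorphism factors, and it must show that the odd-$l$ coefficients $B^{\bsi,\brho}_{l,2k}$ drop out. You call this bookkeeping and defer it, but it is the theorem.

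Your proposed base case does not supply it either. Already for $\fg=0$ with a single critical point (e.g.\ $\bC^3$), the Bergman kernel has a nonzero regular part in $\zeta_{\bsi}$. So self-loop edges $\check{B}^{\bsi,\bsi}_{k,l}$ and multi-vertex graphs occur, and the output is an $R$-action plus a translation of the Witten--Kontsevich function, not a translation alone. Moreover, the ``known equivalence'' you invoke is itself a special case of \cite{DOSS14}.

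Finally, an induction on $2g-2+n$ whose step is just ``the edge propagator'' is not an argument. For fixed $(g,n)$ the loop equation is a direct identity among explicitly defined graph sums, so there is nothing for an induction hypothesis to supply, and the edge contributions are exactly what must be verified. As written, the proposal reduces the statement to itself. You need either to carry out this cancellation explicitly or, as the paper does, cite \cite[Theorem 3.7]{DOSS14}.
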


We state the identification of A- and B-model graph sums provided by \cite{flz2020remodeling}. For $i=1, \dots, n$ and $\brho \in I_\Sigma$, introduce formal variables
$$
    \tilde{\bu}_i^\brho(z) = \sum_{k \in \bZ_{\ge 0}}(\tilde{u}_i)^\brho_k z^k \coloneqq \sum_{\bsi'\in I_\Si}
    \left(\bar{\bu}_i^{\bsi'}(z)
    S^{\widehat{\underline{\brho}} }_{\spa \bsi'}(z)\right)_+.
$$
Then, via the expression \eqref{eqn:AmodelOrdinaryLeaf}, under the relation $\frac{1}{\sqrt{-2}}\htheta_{\brho}^k(p_i) = -(\tilde{u}_i)^\brho_k$ and the closed mirror map, we have the identification of ordinary leaves
$$
    (\check{\cL}^\bu)^{\bsi}_k(l_i) = - (\cL^\bu)^{\bsi}_k(l_i).
$$

\begin{theorem}[{\cite[Theorem 7.2]{flz2020remodeling}}]\label{thm:GraphSumId}
Let $2g - 2 + n > 0$. For any $\vGa\in \bGa_{g,n}(\cX)$, we have
$$
    w^{\text{$\bu$}}_B(\vGa) \big|_{ \frac{1}{\sqrt{-2}}\htheta_{\brho}^k(p_i) = -(\tilde{u}_i)^\brho_k}
    = (-1)^{g(\vGa)-1+n} w^{\text{$\bu$}}_A(\vGa) \big|_{\substack{\su_1 = u_1 \\ \su_2 = u_2}}
$$
under the closed mirror map $\btau = \btau(q)$.
\end{theorem}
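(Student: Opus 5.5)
The plan is to compare the two weights of a fixed labeled graph $\vGa\in\bGa_{g,n}(\cX)$ factor by factor: vertices, edges, dilaton leaves, ordinary leaves. The signs are tracked separately at the end. Everything hinges on one input, the identification $R_{\bsi'}^{\spa\bsi}(z)=\chR_{\bsi'}^{\spa\bsi}(-z)$ under the closed mirror map, together with a matching of the normalizations $h_1^{\bsi}$ and $\Delta^{\bsi}(t)$. I would establish these first and then check that each local factor of $w_B^{\bu}$ reduces to the corresponding factor of $w_A^{\bu}$.

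\textbf{Step 1 (Frobenius structures and $R$-matrices).} I would show that under $\btau=\btau(q)$ the critical values $\check u^{\bsi}$ are the canonical coordinates $u^{\bsi}$, up to the constants fixed by the normalization conventions, and that
$$\frac{h_1^{\bsi}}{\sqrt{-2}}=\frac{1}{\sqrt{\Delta^{\bsi}(t)}}.$$
The tool is the equivariant mirror theorem $J_{\bT'}=I_{\bT'}$ combined with the oscillatory integrals $\int_{\Gamma}e^{-\hx/z}\Phi$ over Lefschetz thimbles $\Gamma$ of $\Re(\hx)$. These integrals solve the same quantum differential equation as the $\cS$-operator. Their stationary phase expansion at $p_{\bsi}$ has the form $\sqrt{\pi z}\, e^{-\check u^{\bsi}/z}h_1^{\bsi}(1+O(z))$. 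Comparing the leading terms gives the Frobenius data. The higher-order terms in $z$ give a matrix $\chR(-z)$ which, by Theorem~\ref{R-matrix}, must coincide with the A-model $R(z)$ once three things are checked:
\begin{itemize}
\item both make $\Psi R e^{U/z}$ a fundamental solution;
\item both satisfy the unitarity condition $R^T(-z)R(z)=\one$;
\item both have the same limit as $\tQ,\tau''\to0$.
\end{itemize}
The limit is checked locally at each $\cX_\sigma$, where the orbifold mirror curve of $[\bC^3/G_\sigma]$ produces the Bernoulli-polynomial formula of Theorem~\ref{R-matrix} via Stirling-type asymptotics of $\Gamma$-functions. This step is the main obstacle, especially the degree-zero limit in the orbifold case. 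I would attempt it by computing the thimble integrals explicitly for the affine orbifold, then deforming in $q$ and invoking uniqueness.

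\textbf{Step 2 (edges and dilaton leaves).} For edges, I would apply the Laplace transform $\int\int e^{-\hx(p_1)/z-\hx(p_2)/w}B(p_1,p_2)$ over a pair of thimbles. Eynard's identity expresses it as $\frac{1}{z+w}\big(\delta_{\bsi\bsi'}-\sum_{\brho}\chR_{\brho}^{\spa\bsi}(z)\chR_{\brho}^{\spa\bsi'}(w)\big)$ in suitable normalized form. Extracting coefficients, and using Step 1 to replace $\chR(z)$ by $R(-z)$, this gives $\check B^{\bsi,\bsi'}_{k,l}=\cE^{\bsi,\bsi'}_{k,l}$. Similarly, the Laplace transform of $\Phi=\hy\,d\hx$ near $p_{\bsi}$ expresses $\check h^{\bsi}_k$ through $\sum_{\bsi'}\chR_{\bsi'}^{\spa\bsi}(z)h_1^{\bsi'}$. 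With $h_1^{\bsi'}/\sqrt{-2}=\Delta^{\bsi'}(t)^{-1/2}$, this yields $(\check\cL^1)^{\bsi}_k=(\cL^1)^{\bsi}_k$, where the explicit $-1$ in the B-model dilaton leaf matches the $-$ in the A-model one.

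\textbf{Step 3 (vertices, ordinary leaves, signs).} Each vertex factor $(h_1^{\bsi}/\sqrt{-2})^{2-2g(v)-\val(v)}$ equals $\sqrt{\Delta^{\bsi}(t)}^{\,2g(v)-2+\val(v)}$ by Step 1, and the intersection numbers $\langle\prod\tau_{k(h)}\rangle_{g(v)}$ are common to both sides. Under the substitution $\frac{1}{\sqrt{-2}}\htheta^k_{\brho}(p_i)=-(\tilde u_i)^{\brho}_k$, the expression of $(\check\cL^{\bu})^{\bsi}_k$ through $\htheta_{\brho}(z)\chR_{\brho}^{\spa\bsi}(z)$ becomes $-(\cL^{\bu})^{\bsi}_k(l_i)$ by \eqref{eqn:AmodelOrdinaryLeaf} and Step 1. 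Collecting signs, the prefactor $(-1)^{g(\vGa)-1}$ in $w_B^{\bu}$ and the $n$ ordinary-leaf signs produce $(-1)^{g(\vGa)-1+n}$, while all other factors agree on the nose after setting $\su_i=u_i$. This gives the claimed identity.
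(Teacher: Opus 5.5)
Your proposal is correct and is essentially the argument of \cite[Theorem 7.2]{flz2020remodeling}, which the paper cites. It uses the same inputs: $R_{\bsi'}^{\spa \bsi}(z)=\chR_{\bsi'}^{\spa \bsi}(-z)$ (their Theorem 7.1, which your Step 1 sketches a proof of), the normalization $h_1^{\bsi}=\sqrt{-2/\Delta^{\bsi}}$, and Eynard's Laplace-transform formulas for $\check{B}^{\bsi,\bsi'}_{k,l}$ and $\check{h}^{\bsi}_k$. With these, vertices, edges and dilaton leaves agree exactly, and each ordinary leaf contributes $-1$.

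One point to tighten in Step 1: the B-model data only live on the small slice $t=\btau(q)$, so you cannot apply Theorem~\ref{R-matrix} for the big QDE verbatim. Instead you need uniqueness of $R$ along the small directions. Givental's recursion still gives this, because those directions separate the canonical coordinates, and the $q\to 0$ limit fixes the diagonal constants.
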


\subsection{B-model open potentials}\label{sect:BmodelOpen}
For $\bfeta \in J_\Sigma$, in the complex $\hX_{\bfeta}$-plane, let $A_{\bfeta}$ denote a small punctured disk centered at the origin when $\cL_\tau$ is outer, or a small annulus around the origin when $\cL_\tau$ is inner. In either case, there is an isomorphism
$$
    \rho_{\bfeta}\colon  A_{\bfeta} \longrightarrow U_{\bfeta} \subset C_q.
$$
We note that the constructions below, which treats the outer and inner cases simultaneously, recovers the outer case considered in \cite{flz2020remodeling} after the origin is added to $A_{\bfeta}$ and the puncture is added to $U_{\bfeta}$.

Let $g \in \bZ_{\ge 0}$ and $n \in \bZ_{> 0}$. Let $\bfeta_1, \dots, \bfeta_n \in J_\Sigma$. For $i = 1, \dots, n$, let $\hX_i$ denote a copy of the variable $\hX_{\bfeta_i}$ and $A_i$ denote a copy of $A_{\bfeta_i}$ with the isomorphism 
$$
    \rho_i\colon  A_i \longrightarrow U_{\bfeta_i}.
$$
The variable $\hX_i$ will be related to the A-model variable $\tX_i$ via the open mirror map $\tX_i = \tX_i(q, \hX_i)$ given by $\tX_{\bfeta_i} = \tX_{\bfeta_i}(q, \hX_{\bfeta_i})$.

\subsubsection{Disk potential}\label{sect:BmodelDisk}
For $(g, n) = (0, 1)$, let $\bff_1$ be the reference flag for the cone $\tau_1$, and consider
\begin{equation}\label{eqn:DiskResFree}
        - \rho_1^* \omega_{0,1} - \Res_{\hX_1 = 0} \left(-\rho_1^* \omega_{0,1} \right) \frac{d\hX_1}{\hX_1}  
        = \rho_1^*(-\log Y_{\bff_1}) \frac{d\hX_1}{\hX_1} - \Res_{\hX_1 = 0} \left( \rho_1^*(-\log Y_{\bff_1}) \frac{d\hX_1}{\hX_1} \right) \frac{d\hX_1}{\hX_1}
\end{equation}
which is a holomorphic 1-form on $A_1$ with no residue at $\hX_1 = 0$. Here, we may choose any branch of $\log Y_{\bff_1}$, such as in \eqref{eqn:LogY}, and the choice does not affect \eqref{eqn:DiskResFree} since the constant ambiguity is annihilated during the subtraction of residue. The difference between $\omega_{0,1}$ and $\hy_{\bff_1}d\hx_{\bff_1}$ is also annihilated during the subtraction of residue. Let 
$$
    \chF_{0,1}(q; \hX_1)
$$
be the holomorphic function on $A_1$ defined by the following two properties:
\begin{itemize}
    \item $\chF_{0,1}$ is a primitive of \eqref{eqn:DiskResFree}, i.e.
    $$
        d \chF_{0,1} = \rho_1^*(-\log Y_{\bff_1}) \frac{d\hX_1}{\hX_1} - \Res_{\hX_1 = 0} \left( \rho_1^*(-\log Y_{\bff_1}) \frac{d\hX_1}{\hX_1} \right) \frac{d\hX_1}{\hX_1};
    $$

    \item in the Laurent series expansion of $\chF_{0,1}$ in $\hX_1$, there are no constant terms.
    
\end{itemize}

\subsubsection{Annulus potential}\label{sect:BmodelAnn}
For $(g, n) = (0, 2)$, consider
\begin{equation}\label{eqn:AnnResFree}
    (\rho_1 \times \rho_2)^* \omega_{0,2} - \delta_{\bfeta_1, \bfeta_2} \frac{d\hX_1 d\hX_2}{(\hX_1 - \hX_2)^2}
\end{equation}
which is a holomorphic 2-form on $A_1 \times A_2$ with no residue at $\hX_1 = 0$ or $\hX_2 = 0$. Note that the diagonal double pole of $(\rho_1 \times \rho_2)^* \omega_{0,2}$ only appears when $\bfeta_1 = \bfeta_2$. Let 
$$
    \chF_{0,2}(q; \hX_1, \hX_2)
$$
be the holomorphic function on $A_1 \times A_2$ defined by the following two properties:
\begin{itemize}
    \item $\chF_{0,2}$ is a primitive of \eqref{eqn:AnnResFree}, i.e.
    $$
        \frac{\partial^2 \chF_{0,2} }{\partial \hX_1 \partial \hX_2} d\hX_1 d\hX_2  = (\rho_1 \times \rho_2)^* \omega_{0,2} - \delta_{\bfeta_1, \bfeta_2} \frac{d\hX_1 d\hX_2}{(\hX_1 - \hX_2)^2};
    $$

    \item in the Laurent series expansion of $\chF_{0,2}$ in $\hX_1, \hX_2$, the coefficient of the monomial $\hX_1^{d_1} \hX_2^{d_2}$ is zero whenever $d_1 = 0$ or $d_2 = 0$.
    
\end{itemize}

\subsubsection{Stable cases}
For $2g-2+n > 0$, $(\rho_1 \times \cdots \times \rho_n)^* \omega_{g,n}$ is a holomorphic $n$-form on $A_1 \times \cdots \times A_n$ with no residue at any $\hX_{\bfeta_i} = 0$. Let 
$$
    \chF_{g,n}(q; \hX_1, \dots, \hX_n)
$$
be the holomorphic function on $A_1 \times \cdots \times A_n$ defined by the following two properties:
\begin{itemize}
    \item $\chF_{g,n}$ is a primitive of $(-1)^n (\rho_1 \times \cdots \times \rho_n)^* \omega_{g,n}$, i.e.
    $$
        \frac{\partial^n \chF_{g,n} }{\partial \hX_1 \cdots \partial \hX_n} d\hX_1 \cdots d\hX_n  = (-1)^n (\rho_1 \times \cdots \times \rho_n)^* \omega_{g,n};
    $$

    \item in the Laurent series expansion of $\chF_{g,n}$ in $\hX_1, \dots, \hX_n$, the coefficient of the monomial $\hX_1^{d_1} \cdots \hX_n^{d_n}$ is zero whenever $d_i = 0$ for some $i$.
    
\end{itemize}

\subsection{B-model open graph sum}
For the rest of Section \ref{sect:AllGenus}, we take the specialization $u=1$, i.e. $u_1 = \sfa$, $u_2 = \sfb$, similar to Section \ref{sect:AmodelOpenGraph}.

Let $\bsi \in I_\Si$. For $k \in \bZ_{\ge 0}$ and $i = 1, \dots, n$, $\rho_i^* \theta_{\bsi}^k$ is a holomorphic 1-form on $A_i$ with no residue at $\hX_i = 0$. Let $\xi_{i, \bsi}^k$ be the holomorphic function on $A_i$ such that
$$
    d\xi_{i, \bsi}^k = \rho_i^* \theta_{\bsi}^k.
$$
We set $\hxi_{i, \bsi}^0 = \xi_{i, \bsi}^0$ and $\hxi_{i, \bsi}^k = \rho_i^* \hxi_{i, \bsi}^k$ for $k \in \bZ_{\ge 1}$, so that
$$
    d\hxi_{i, \bsi}^k = \rho_i^* \htheta_{\bsi}^k.
$$
Let
$$
    \xi_{i}^{\bsi}(z, \hX_i) \coloneqq \sum_{k \in \bZ_{\ge 0}} z^k \xi_{i, \bsi}^k(\hX_i), \qquad \hxi_{i}^{\bsi}(z, \hX_i) \coloneqq \sum_{k \in \bZ_{\ge 0}} z^k \hxi_{i, \bsi}^k(\hX_i) 
$$
so that $d\xi_{i}^{\bsi} = \rho_i^* \theta_{\bsi}$ and $d\hxi_{i}^{\bsi} = \rho_i^* \htheta_{\bsi}$. 

The \emph{B-model open leaf} is defined by 
$$
    (\check{\cL}^O)^{\bsi}_k(l_i) \coloneqq  - \frac{1}{\sqrt{-2}} \xi_{i, \bsi}^k = - [z^k] \sum_{\brho \in I_\Sigma} \frac{\hxi_{i}^{\brho}(z, \hX_i)}{\sqrt{-2}} \chR_{\brho}^{\spa \bsi}(z).
$$
Then, the descendant graph sum formula in Theorem \ref{thm:DOSS} imply the following open graph sum formula.

\begin{theorem}\label{thm:BmodelOpenGraph}
For $2g - 2 + n > 0$, we have
$$
    \chF_{g,n}(q; \hX_1, \dots, \hX_n) = \sum_{\vGa \in \bGa_{g,n}(\cX)}\frac{w_B^O(\vGa)}{|\Aut(\vGa)|}.
$$
where $w_B^O(\vGa)$ is defined by substituting the ordinary leaves $\check{\cL}^\bu$ in $w_B^{\bu}(\vGa)$ with the open leaves $\check{\cL}^O$.
\end{theorem}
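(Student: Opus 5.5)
The plan is to pull back Theorem \ref{thm:DOSS} along $\rho_1\times\cdots\times\rho_n$ and integrate term by term. For $2g-2+n>0$, Theorem \ref{thm:DOSS} writes $\omega_{g,n}$ as a finite graph sum for each fixed $(g,n)$. Each weight $w_B^{\bu}(\vGa)$ is a product of two kinds of factors. The first are constants in $p_1,\dots,p_n$: vertex factors, edge factors $\check B$, and dilaton leaf factors. The second are the ordinary leaf factors $(\check{\cL}^\bu)^{\bsi(l_i)}_{k(l_i)}(l_i)=\frac{1}{\sqrt{-2}}\theta^{k}_{\bsi}(p_i)$, one for each $i$. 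So $w_B^{\bu}(\vGa)$ is a sum of products $\prod_{i=1}^n\theta^{k_i}_{\bsi_i}(p_i)$ with constant coefficients, and the pullback $(\rho_1\times\cdots\times\rho_n)^*\omega_{g,n}$ is the corresponding sum of $\prod_i\rho_i^*\theta^{k_i}_{\bsi_i}$.

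Next, for each factor I would verify that $\rho_i^*\theta_{\bsi}^k$ has a primitive $\xi^k_{i,\bsi}$ on $A_i$ with no constant term in its Laurent expansion. This needs two facts.
\begin{itemize}
\item $\theta^k_{\bsi}$ has poles only at $p_\bsi\in\Crit$, and $U_{\bfeta_i}$ is disjoint from $\Crit$, so $\rho_i^*\theta^k_\bsi$ is holomorphic on $A_i$.
\item $\theta^k_{\bsi}$ is a residue of $B$ in the second variable, and $B$ has vanishing residues. Hence $\theta^k_\bsi$ is a differential of the second kind with zero residue everywhere, so the Laurent expansion of $\rho_i^*\theta^k_\bsi$ in $\hX_i$ has no $d\hX_i/\hX_i$ term.
\end{itemize}
The outer case is a punctured disk, where this gives a Laurent primitive. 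The inner case is an annulus. There the vanishing of the $d\hX_i/\hX_i$ coefficient equals the vanishing of the period around the core loop of $U_{\bfeta_i}$. This period is $\oint_{\partial U}\Res B$. It should vanish because the core loop is homologous to a sum of punctures' loops together with A-cycles, and $B$ is A-normalized. I expect this period check in the inner case to be the main subtlety, and I would settle it using the choice of cycles from \cite[Section 5.9]{flz2020remodeling}.

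With these primitives, the function $F\coloneqq\sum_{\vGa}|\Aut(\vGa)|^{-1}w_B^O(\vGa)$ is a polynomial in the $\xi^{k_i}_{i,\bsi_i}(\hX_i)$, each of which depends on a single variable. The leaf factor changes as $\frac{1}{\sqrt{-2}}\theta\mapsto-\frac{1}{\sqrt{-2}}\xi$, which produces an overall sign $(-1)^n$. Therefore
$$\frac{\partial^nF}{\partial\hX_1\cdots\partial\hX_n}\,d\hX_1\cdots d\hX_n=(-1)^n(\rho_1\times\cdots\times\rho_n)^*\omega_{g,n}.$$
Since each $\xi^k_{i,\bsi}$ has no constant term, every monomial of $F$ has nonzero exponent in each $\hX_i$. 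These are exactly the two properties that characterize $\chF_{g,n}$, and the characterization is unique: two functions with the same mixed derivative differ by a sum of functions each missing some variable, and the no-zero-exponent condition forces that sum to vanish. Hence $F=\chF_{g,n}$.

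Finally, the expression $-[z^k]\sum_\brho\hxi^\brho_i\chR_\brho^{\spa\bsi}/\sqrt{-2}$ follows by integrating the relation $\theta_\bsi(z)=\sum_{\bsi'}\chR_{\bsi'}^{\spa\bsi}(z)\htheta_{\bsi'}(z)$ termwise. This requires $\hxi^k_{i,\bsi}$ for $k\ge1$ to have no constant term, which I would handle by adjusting the normalization, or by noting that these constants cancel because both sides are determined by their derivatives and $\xi$ has no constant term.
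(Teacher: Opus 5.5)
Your proposal is correct and follows the paper's argument, which is just the remark that Theorem \ref{thm:DOSS} implies the statement. You pull back the DOSS graph sum, replace each leaf $\frac{1}{\sqrt{-2}}\theta^k_{\bsi}(p_i)$ by the primitive $-\frac{1}{\sqrt{-2}}\xi^k_{i,\bsi}$ normalized to have no constant term, and match the resulting $(-1)^n$ and normalization to the characterization of $\chF_{g,n}$. Your inner-brane period argument (the core loop of $U_{\bfeta_i}$ lies in the span of the A-cycles, on which $B$ is normalized) justifies a residue-freeness claim the paper only asserts.

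One correction to your last paragraph. Renormalizing is not an option, since $\hxi^k_{i,\bsi}=\rho_i^*\hxi^k_{\bsi}$ is fixed for $k\ge 1$, and it is also unnecessary. On $U_{\bfeta_i}$ one has $d\hx=-\sfa_{\bff_i}\,d\hX_i/\hX_i$. Hence $\hxi^k_{i,\bsi}=\frac{1}{\sfa_{\bff_i}}\hX_i\frac{d}{d\hX_i}\hxi^{k-1}_{i,\bsi}$ for $k\ge 2$, which has no constant term. For $k=1$, the constant term of $\hxi^1_{i,\bsi}=\frac{1}{\sfa_{\bff_i}}\hX_i\,(\rho_i^*\theta^0_{\bsi}/d\hX_i)$ equals the vanishing residue of $\rho_i^*\theta^0_{\bsi}$. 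So all these functions are automatically free of constant terms, and the $\chR$-expression for the open leaf holds exactly.
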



\subsection{Disk mirror symmetry}
We now discuss the unstable cases of open mirror symmetry.
For $(g,n)=(0,1)$, in the terminology above, Theorem \ref{thm:DiskExpansion} is rephrased as follows.

\begin{proposition}\label{prop:DiskMirror}
For any $\bfeta_1 \in J_\Sigma$, we have
$$
    \chF_{0,1}(q; \hX_1) = F_{0,1}(\btau; \tX_1) 
$$
under the open-closed mirror map $\btau = \btau(q)$, $\tX_1 = \tX_1(q, \hX_1)$.
\end{proposition}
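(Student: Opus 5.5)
The plan is to deduce the statement from Theorem \ref{thm:DiskExpansion} by comparing both sides after applying the operator $\hX_1\frac{\partial}{\partial \hX_1}$. The constant ambiguities are then fixed by the normalizations on each side. Write $\bff_1$ for the reference flag of $\tau_1$ (the unique one if $\cL_{\tau_1}$ is outer, $\bff_{1,+}$ if inner). Recall from the remark after Theorem \ref{thm:DiskExpansion} that the open mirror map $\log\tX_1=\log\hX_1+\sum_j w_j^{\bff_1}A_{i_j^{\bff_1}}(q)$ shifts $\log\tX_1$ by a function of $q$ only. Hence $\tX_1\partial_{\tX_1}=\hX_1\partial_{\hX_1}$. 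Moreover, $F_{0,1}(\btau;\tX_1)$, which is a Laurent series in $\tX_1$ with only nonzero powers, pulls back to the Laurent series $W_{\bfeta_1}(q,\hX_1)$ in $\hX_1$, again with only nonzero powers. Indeed, every $(\beta,d)\in\bK_{\tau_1}$ has $d\neq 0$, and the inner case was already rewritten as a single sum over $\bK_{\tau_1}$ in the computation preceding Section \ref{sect:LocalExpansion}.

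On the B-side, the first defining property of $\chF_{0,1}$ gives
$$
\hX_1\frac{\partial}{\partial\hX_1}\chF_{0,1}=\rho_1^*(-\log Y_{\bff_1})-c,
$$
where $c$ is the residue term, i.e. the constant ($\hX_1^0$) coefficient of $\rho_1^*(-\log Y_{\bff_1})$. Applying $\hX_1\partial_{\hX_1}$ once more kills $c$, and we obtain
$$
\Big(\hX_1\frac{\partial}{\partial\hX_1}\Big)^2\chF_{0,1}=-\hX_1\frac{\partial}{\partial\hX_1}\log Y_{\bff_1}\big|_{U_{\bfeta_1}}.
$$
By Theorem \ref{thm:DiskExpansion}, this equals $(\tX_1\partial_{\tX_1})^2F_{0,1}=(\hX_1\partial_{\hX_1})^2 W_{\bfeta_1}$. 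We must check that the local branch conventions agree. The expansion of $\log Y_{\bff_1}$ in Section \ref{sect:LocalExpansion}, namely \eqref{eqn:LogY} with the chosen $\sfa_{\bff_1}$-th root, is exactly the one determining the coordinate $\hX_1=\hX_{\bff_1,\eta_1}$ on $U_{\bfeta_1}$ used by $\rho_1$. Any other branch of $\log$ differs by a constant and does not affect the derivative.

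To conclude, note that $(\hX_1\partial_{\hX_1})^2$ acts on the Laurent coefficient of $\hX_1^d$ by multiplication by $d^2$, which is injective on the span of monomials with $d\neq 0$. Both $\chF_{0,1}$ (by its second normalization) and $W_{\bfeta_1}$ lie in this span, so they coincide. Here we interpret $\chF_{0,1}$ through its convergent Laurent expansion on the disk or annulus $A_1$; for $|q|$ small this matches the formal expansion in $q$ and $\hX_1$ appearing in $W_{\bfeta_1}$. Combining with $F_{0,1}(\btau;\tX_1)=W_{\bfeta_1}(q,\hX_1)$ under the open-closed mirror map gives the claim.

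The main point requiring care is the inner case. There $U_{\bfeta_1}$ is an annulus, and one must make sure that the Laurent expansion of $\rho_1^*\log Y_{\bff_{1,+}}$ on the annulus agrees, as a formal series in $q$ and $\hX_1^{\pm1}$, with the solution $v$ produced by Lemma \ref{lem:47} from the $\bff_{1,+}$ chart. That solution contains both positive and negative powers of $\hX_1$, the latter coming from the $\bK_\tau^-$ terms via $\hX_{\bff_-,\eta}=q^{[l_\tau]}\hX_{\bfeta}^{-1}$. I would verify this by observing that the Laurent series in Lemma \ref{lem:47} converges on the annulus $|q^{[l_\tau]}|\ll|\hX_1|\ll 1$ and solves the curve equation there with the prescribed leading branch. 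By uniqueness of the branch on $U_{\bfeta_1}$, it is the expansion used in Theorem \ref{thm:DiskExpansion}.
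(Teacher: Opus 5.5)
Your proposal is correct and takes the paper's approach: the paper presents Proposition \ref{prop:DiskMirror} as a direct rephrasing of Theorem \ref{thm:DiskExpansion}. Your unpacking supplies the details of that rephrasing: apply $(\hX_1\partial_{\hX_1})^2$, use $\tX_1\partial_{\tX_1}=\hX_1\partial_{\hX_1}$, and invert on the span of monomials $\hX_1^{d}$ with $d\neq 0$ via the no-constant-term normalizations of $\chF_{0,1}$ and $F_{0,1}$. Your remark on the convergence and branch of the Laurent expansion on the annulus in the inner case fills in a point the paper leaves implicit in Section \ref{sect:LocalExpansion}.
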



Proposition \ref{prop:DiskMirror} allows us to make the following identification of the A- and B-model open leaves.

\begin{lemma}\label{lem:OpenLeaf}
For any $i = 1, \dots, n$ and $\brho \in I_\Sigma$, we have
\begin{equation}\label{eqn:KeyLeafEqn}
    \frac{\hxi_{i}^{\brho}(z, \hX_i)}{\sqrt{-2}} = \sum_{\bsi' \in I_\Sigma} \left(\txi_{i}^{\bsi'}(z, \tX_i)
    S^{\widehat{\underline{\brho}} }_{\spa \bsi'}(z)\right)_+ \bigg|_{\substack{\su_1 = \sfa \\ \su_2 = \sfb}}
\end{equation}
under the open-closed mirror map $\btau = \btau(q)$, $\tX_i = \tX_i(q, \hX_i)$. In particular, for any $\bsi \in I_\Sigma$ and $k \in \bZ_{\ge 0}$, we have the identification of open leaves
$$
    (\check{\cL}^O)^{\bsi}_k(l_i) = - (\cL^O)^{\bsi}_k(l_i).
$$
\end{lemma}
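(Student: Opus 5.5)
We would follow the strategy of \cite[Section 7]{flz2020remodeling}, adapted to the present local regions $U_{\bfeta}$ and to fractional framings. The second statement follows from the first by comparing the definitions: $(\check{\cL}^O)^{\bsi}_k(l_i) = -[z^k]\sum_{\brho} \frac{\hxi_i^{\brho}(z,\hX_i)}{\sqrt{-2}}\chR_{\brho}^{\spa\bsi}(z)$, and $\chR_{\brho}^{\spa\bsi}(z) = R_{\brho}^{\spa\bsi}(-z)$. Substituting \eqref{eqn:KeyLeafEqn} therefore produces exactly $-(\cL^O)^{\bsi}_k(l_i)$. So the work is to prove \eqref{eqn:KeyLeafEqn}.

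\textbf{Step 1: the case $z^0$, via the disk potential.} By \eqref{eqn:AmodelXiDerivative} and the definition of $\hxi^k$, both sides of \eqref{eqn:KeyLeafEqn} satisfy the same recursion in $k$:
\begin{itemize}
\item on the A-side, $\tX_i\frac{d}{d\tX_i}$ raises the $z$-degree of $\txi_i$ by one, up to the factor $\su_1^{\bff_{i,+}}$;
\item on the B-side, $\hxi^{k+1} = -\frac{d}{d\hx}\hxi^k$, and $d\hx = u_1^{\bff}\,d\log \hX_{\bff}$ on $U_{\bfeta}$ up to a $q$-dependent constant.
\end{itemize}
Together with $\tX\partial_{\tX} = \hX\partial_{\hX}$ and the fact that the quantum differential equation for $\cS$ intertwines these operators, this reduces the claim to its $z^0$ coefficient. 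That coefficient involves only the $z^{-1},z^{-2}$ parts of $\txi$ paired with $\cS$.

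The key input is the relation between $\theta^0_{\bsi}$ and the disk: $\sum_{\bsi}\theta^0_{\bsi}$ weighted by normalization constants, or more precisely $\hxi^1_{\bsi}$, should be computed through $d\log Y_{\bff}$. Concretely, we would use that $\sum_{\brho}\sqrt{\Delta^{\brho}}\,\htheta^0_{\brho}$, appropriately normalized, equals $d\big(\hX\frac{\partial}{\partial\hX}\log Y_{\bff}\big)$ up to the residue-free correction. On the A-side, \eqref{eqn:AmodelDiskCanonical} expresses $F_{0,1}$ via $(\one,\cS(\phi_{\bsi}))\txi$. Proposition \ref{prop:DiskMirror} then matches the two.

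\textbf{Step 2: pairing against the $\cS$-operator.} To obtain the individual $\brho$-components rather than a sum, we would express the right side of \eqref{eqn:KeyLeafEqn} as an oscillatory-integral or Laplace-transform pairing. The main tool is the identity \eqref{eqn:SIdentity}, combined with the annulus-type identity of \cite[Proposition 6.6, Theorem 7.1]{flz2020remodeling}. In that identity, $\htheta_{\brho}(z)$ is a Laplace transform of $B$ along Lefschetz thimbles, equal to $\sqrt{-2}$ times the $\cS$-matrix entries against $\hat\phi$. Expanding $B(p,p')$ with $p\in U_{\bfeta_i}$ in the local coordinate $\hX_i$, each Laurent coefficient is matched to $\txi$ through the same hypergeometric series as in Section \ref{sect:LocalExpansion}. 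This is the step that was announced via Proposition \ref{prop:AnnMirror} (annulus mirror symmetry), which we would prove first.

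\textbf{Main obstacle.} The inner, fractionally framed case is the hardest part. Here $U_{\bfeta}$ is an annulus, so expansions contain both positive and negative powers, coming respectively from $\bff_+$ and $\bff_-$. The branch choices of $Y_{\bff}^{1/\sfa_{\bff}}$ must also be shown to produce exactly the twisting factors $\eps^{\bff_\pm}_\eta$. We would handle this by:
\begin{enumerate}
\item splitting every function on $A_i$ into its $\hX_i^{>0}$ and $\hX_i^{<0}$ parts;
\item rewriting the negative part in the coordinate $\hX_{\bff_-,\eta} = q^{[l_\tau]}\hX_{\bfeta}^{-1}$;
\item applying the outer-type argument at the flag $\bff_-$, using the $\bK_\tau^-$ reindexing computation already carried out for $W_{\bfeta}$ to reconcile the signs.
\end{enumerate}
Finally, the $(\cdot)_+$ truncation must be checked to agree with the residue-free normalization of $\xi^k_{i,\bsi}$, that is, the absence of constant terms in the primitives.
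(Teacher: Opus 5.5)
Your deduction of the open-leaf identity from \eqref{eqn:KeyLeafEqn} is correct: multiply by $\chR_{\brho}^{\spa\bsi}(z)=R_{\brho}^{\spa\bsi}(-z)$. So is your observation that $\hX_i\partial_{\hX_i}=-\sfa_{\bff}\,d/d\hx$ and $\tX_i\partial_{\tX_i}$ shift $k$ compatibly on both sides. One slip there: the $z^0$ coefficient of $\sum_{\bsi'}\txi_i^{\bsi'}S^{\widehat{\underline{\brho}}}_{\spa\bsi'}$ involves the $z^{a}$, $a\ge 0$, parts of $\txi_i$. The $z^{-1},z^{-2}$ parts are exactly what $(\cdot)_+$ discards.

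\textbf{The gap is in Step 2.} This is the step that must pass from the disk potential to each individual component $\brho$. You propose to get it from Proposition \ref{prop:AnnMirror}, ``proved first''. In this paper, however, that proposition is a consequence of the present lemma. The second equality in the proof of Lemma \ref{lem:AnnMirrorDer} is \eqref{eqn:KeyLeafEqn} itself, and the reduction to $q\to 0$ in the proof of Proposition \ref{prop:AnnMirror} uses the open-leaf identification. You offer no independent proof of annulus mirror symmetry; \cite[Proposition 6.6, Theorem 7.1]{flz2020remodeling} concern the $R$-matrices, not open expansions. As written, the argument is therefore circular.

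Even granting an annulus identity, it only controls the bilinear sums $\sum_{\brho}\hxi^0_{1,\brho}\hxi^0_{2,\brho}$ and does not separate components. Likewise, the thimble/Laplace-transform description of $\htheta_{\brho}$ is global. It says nothing directly about the expansion of $\hxi_{\brho}$ on $U_{\bfeta_i}$ in terms of $\txi_i$.

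\textbf{The missing idea is to differentiate in the closed moduli $\btau$, not just in the open variable.}
\begin{enumerate}
\item Theorem \ref{thm:DiskExpansion}, \eqref{eqn:AmodelDiskCanonical} and \eqref{eqn:AmodelXiDerivative} give, for all $k\ge -2$ at once, the identity $\sum_{k\ge -2}z^k\rho_i^*\big((-1)^{k+1}(d/d\hx)^{k+1}\hy_{\bff}\big)=\sum_{\bsi'}\txi_i^{\bsi'}(z,\tX_i)\,(\one,\cS(\phi_{\bsi'}))$ at $\su_1=\sfa$, $\su_2=\sfb$. This is only the $\one$-direction.
\item Write $\hat\phi_{\brho}(\btau(q))=\sum_i A^i_{\brho}H_{a_i}\star_{\btau}H_{b_i}-\sum_a B^a_{\brho}H_a+C_{\brho}\one$.
\item On the A-side, the quantum differential equation $z\partial_{\tau_a}(c,\cS(b))=(H_a\star_{\btau}c,\cS(b))$ shows that the operator $z^2\sum_i A^i_{\brho}\partial_{\tau_{a_i}}\partial_{\tau_{b_i}}-z\sum_a B^a_{\brho}\partial_{\tau_a}+C_{\brho}$ turns $(\one,\cS(\phi_{\bsi'}))$ into $S^{\widehat{\underline{\brho}}}_{\spa\bsi'}$.
\item On the B-side, \cite[(7.4)]{flz2020remodeling}, applied at the flag $\bff$ with $h_1^{\brho}=\sqrt{-2/\Delta^{\brho}}$, expresses $-\theta^0_{\brho}/\sqrt{-2}$ through $\tau$- and $\hx$-derivatives of $\hy_{\bff}d\hx_{\bff}$ with the same coefficients $A^i_{\brho},B^a_{\brho},C_{\brho}$. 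Applying the operator to the left side and taking $(\cdot)_+$ therefore yields $\sum_{k\ge 0}z^k\hxi^k_{i,\brho}/\sqrt{-2}$.
\end{enumerate}
This is what isolates each $\brho$-component.

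The inner and fractional-framing issues you list as the main obstacle are already absorbed into Theorem \ref{thm:DiskExpansion}. These are the splitting into positive and negative powers, the branches of $Y_{\bff}^{1/\sfa_{\bff}}$, and the twisting factors $\eps^{\bff_\pm}_\eta$. Once that theorem is available, the argument above is uniform and needs no case split.
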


Equation \eqref{eqn:KeyLeafEqn} is the analog of \cite[Equation (7.5)]{flz2020remodeling}. The identification of open leaves follows from \eqref{eqn:KeyLeafEqn} by multiplying the (A- or B-model) $R$-matrix.

\begin{proof}
By Theorem \ref{thm:DiskExpansion} applied to $\bfeta = \bfeta_i$, $\bff = \bff_i$ and using \eqref{eqn:AmodelDiskCanonical} to rewrite $F_{0,1}$, we have
$$
    \hX_i \frac{\partial}{\partial \hX_i} \rho_i^*(- \log Y_{\bff}) = \left(\tX_i \frac{\partial}{\partial \tX_i} \right)^2 [z^{-2}] \sum_{\bsi' \in I_\Sigma}
    \txi_i^{\bsi'}(z, \tX_i) \left( \text{$\one$}, \cS(\phi_{\bsi'}) \right)_{\cX,\bT'}  \bigg|_{\bT_{\sff}}.
$$
For the right-hand side, for $\bsi' \in I_\Sigma$, $\left( \one, \cS(\phi_{\bsi'}) \right)_{\cX,\bT'}$ takes form $\sum_{a \in \bZ_{\ge 0}} S_a z^{-a-2}$, and thus
$$
    [z^{-2}] \sum_{\bsi' \in I_\Sigma}
    \txi_i^{\bsi'}(z, \tX_i) \left( \one, \cS(\phi_{\bsi'}) \right)_{\cX,\bT'} = \sum_{a \in \bZ_{\ge 0}} ([z^a] \txi_i^{\bsi'}(z, \tX_i)) S_a.
$$
This together with \eqref{eqn:AmodelXiDerivative} implies that for any $k \in \bZ_{\ge -2}$, we have
\begin{align*}
    \left( \tX_i \frac{\partial}{\partial \tX_i} \right)^{k+2} [z^{-2}] \sum_{\bsi' \in I_\Sigma}
    \txi_i^{\bsi'}(z, \tX_i) \left( \text{$\one$}, \cS(\phi_{\bsi'}) \right)_{\cX,\bT'}  \bigg|_{\substack{\su_1 = \sfa \\ \su_2 = \sfb}}
    & = \left( \su_1^{\bff} \right)^{k+2} [z^k] \sum_{\bsi' \in I_\Sigma}
    \txi_i^{\bsi'}(z, \tX_i) \left( \text{$\one$}, \cS(\phi_{\bsi'}) \right)_{\cX,\bT'}  \bigg|_{\substack{\su_1 = \sfa \\ \su_2 = \sfb}} \\
    & = \sfa_{\bff}^{k+2} [z^k] \sum_{\bsi' \in I_\Sigma}
    \txi_i^{\bsi'}(z, \tX_i) \left( \text{$\one$}, \cS(\phi_{\bsi'}) \right)_{\cX,\bT'}  \bigg|_{\substack{\su_1 = \sfa \\ \su_2 = \sfb}}. 
\end{align*}
For the left-hand side, since $\hX_{\bff} \frac{\partial}{\partial \hX_{\bff}} = -\sfa_{\bff} \frac{d}{d\hx_{\bff}} =  -\sfa_{\bff} \frac{d}{d\hx}$, we have for any $k \in \bZ_{\ge -2}$ that
$$
    \left( \hX_i \frac{\partial}{\partial \hX_i} \right)^{k+1} \rho_i^*(-\log Y_{\bff}) =  \sfa_{\bff}^{k+2} \rho_i^* \left((-1)^{k+1} \left(\frac{d}{d\hx} \right)^{k+1} \hy_{\bff} \right).
$$
Here in the case $k = -2$, the notation $\left( \hX_i \frac{\partial}{\partial \hX_i} \right)^{-1}$ and $\left(\frac{d}{d\hx_{\bff}} \right)^{-1}$ stands for integration which is defined after removing the residues of the integrand, similar to the definition of $\chF_{0,1}$ in Section \ref{sect:BmodelDisk}. Therefore we have
$$
    \rho_i^* \left((-1)^{k+1} \left(\frac{d}{d\hx} \right)^{k+1} \hy_{\bff} \right) = [z^k] \sum_{\bsi' \in I_\Sigma}
    \txi_i^{\bsi'}(z, \tX_i) \left( \text{$\one$}, \cS(\phi_{\bsi'}) \right)_{\cX,\bT'}  \bigg|_{\substack{\su_1 = \sfa \\ \su_2 = \sfb}}
$$
and thus
\begin{equation}\label{eqn:OpenLeafDerivative}
    \sum_{k \in \bZ_{\ge -2}} z^k \rho_i^* \left((-1)^{k+1} \left(\frac{d}{d\hx} \right)^{k+1} \hy_{\bff} \right) = \sum_{\bsi' \in I_\Sigma}
    \txi_i^{\bsi'}(z, \tX_i) \left( \text{$\one$}, \cS(\phi_{\bsi'}) \right)_{\cX,\bT'}  \bigg|_{\substack{\su_1 = \sfa \\ \su_2 = \sfb}}.
\end{equation}

Let $\brho \in I_\Sigma$. As in the derivation of \cite[Equation (7.4)]{flz2020remodeling} (applied to the flag $\bff$ instead of the reference flag $\bff_0$), and using the sign
$$
    h_1^{\brho}(q)= \sqrt{ \frac{-2}{\Delta^\brho(\btau(q))} }.
$$
as in \cite[Section 7.1]{flz2020remodeling}, if 
$$
    \hat{\phi}_{\brho}(\btau(q)) = \sum_{i = 1}^{\text{$\fg$}} A_{\brho}^i(q) H_{a_i} \star_{\btau(q)} H_{b_i} - \sum_{a = 1}^{\text{$\fp$}} B_{\brho}^a(q) H_a + C_{\brho}(q) \one,
$$
then
$$
    - \frac{\theta_{\brho}^0}{\sqrt{-2}} = \sum_{i = 1}^{\text{$\fg$}} A_{\brho}^i(q) \frac{\partial^2 (\hy_{\bff} d\hx_{\bff})}{\partial\tau_{a_i}\partial\tau_{b_i}} + \sum_{a = 1}^{\fp} B_{\brho}^a(q) d\left(\frac{\frac{\partial (\hy_{\bff} d\hx_{\bff}) }{\partial\tau_a}}{d\hx_{\bff}}\right) + C_{\brho}(q) d\left(\frac{d\hy_{\bff}}{d\hx_{\bff}}\right).
$$
Thus for any $k \in \bZ_{\ge 0}$, we have
$$
    \sum_{i = 1}^{\text{$\fg$}} A_{\brho}^i(q) \frac{\partial^2}{\partial\tau_{a_i}\partial\tau_{b_i}} \left(\frac{d}{d\hx} \right)^{k-1} \hy_{\bff} + \sum_{a = 1}^{\fp} B_{\brho}^a(q) \frac{\partial}{\partial\tau_a} \left(\frac{d}{d\hx} \right)^{k} \hy_{\bff} + C_{\brho}(q) \left(\frac{d}{d\hx} \right)^{k+1} \hy_{\bff} 
    = - \frac{1}{\sqrt{-2}} \left(\frac{d}{d\hx} \right)^{k-1} \frac{\theta_{\brho}^0}{d\hx}
    = \frac{(-1)^k \hxi_{\brho}^k}{\sqrt{-2}}.
$$
Equation \eqref{eqn:KeyLeafEqn} then follows from applying the differential operator
$$
    z^2 \sum_{i = 1}^{\text{$\fg$}}  A_{\brho}^i(q) \frac{\partial^2 }{\partial\tau_{a_i}\partial\tau_{b_i}} - z \sum_{a = 1}^{\text{$\fp$}} B_{\brho}^a(q) \frac{\partial}{\partial\tau_a} + C_{\brho}(q)
$$
to the two sides of \eqref{eqn:OpenLeafDerivative} and applying $(-)_+$.
\end{proof}

\subsection{Annulus mirror symmetry}

We now turn to the case $(g,n)=(0,2)$.

\begin{proposition}\label{prop:AnnMirror}
For any $\bfeta_1, \bfeta_2 \in J_\Sigma$, we have
$$
    \chF_{0,2}(q; \hX_1, \hX_2) = - F_{0,2}(\btau; \tX_1, \tX_2) 
$$
under the open-closed mirror map $\btau = \btau(q)$, $\tX_i = \tX_i(q, \hX_i)$.
\end{proposition}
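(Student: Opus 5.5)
Since both $\chF_{0,2}$ and $F_{0,2}$ are fixed by their second mixed derivative together with the vanishing of all monomials with $d_1=0$ or $d_2=0$, I will compare the 2-forms $\partial_{\hX_1}\partial_{\hX_2}(\cdot)\,d\hX_1 d\hX_2$. On the A-side, \eqref{eqn:AmodelAnnCanonical} together with \eqref{eqn:SIdentity} gives
$$
F_{0,2}=[z_1^{-1}z_2^{-1}]\sum_{\bsi_1,\bsi_2}\frac{1}{z_1+z_2}\sum_{\alpha}\big(\phi_\alpha,\cS_{z_1}(\phi_{\bsi_1})\big)\big(\phi^\alpha,\cS_{z_2}(\phi_{\bsi_2})\big)\,\txi_1^{\bsi_1}(z_1,\tX_1)\,\txi_2^{\bsi_2}(z_2,\tX_2)\Big|_{\bT_\sff}.
$$
I take $\{\phi_\alpha\}$ to be the normalized canonical basis $\hat\phi_\brho(\btau)$, expand $\frac{1}{z_1+z_2}$, and apply $\tX_1\partial_{\tX_1}\tX_2\partial_{\tX_2}$ via \eqref{eqn:AmodelXiDerivative}. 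Each such derivative shifts $z_i$ by one and multiplies by $\su_1^{\bff_i}$, which becomes $\sfa_{\bff_i}$ after specialization; this cancels the factor $z_1+z_2$ through the identity $(z_1+z_2)\cdot\frac{1}{z_1+z_2}=1$. I expect the outcome to be
$$
\tX_1\partial_{\tX_1}\tX_2\partial_{\tX_2}F_{0,2}=\sfa_{\bff_1}\sfa_{\bff_2}\sum_\brho \big[\text{const. term}\big]\ \text{of products of }\textstyle\sum_{\bsi'}\txi_i^{\bsi'}S^{\widehat{\underline{\brho}}}_{\ \bsi'}(z_i),
$$
and by Lemma \ref{lem:OpenLeaf} the $(\cdot)_+$ parts of these sums equal $\hxi_i^\brho(z_i,\hX_i)/\sqrt{-2}$. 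Carrying this through carefully, I aim for the formula
$$
\partial_{\hX_1}\partial_{\hX_2}F_{0,2}\,d\hX_1d\hX_2=-\tfrac{1}{2}\sum_\brho \rho_1^*\htheta^0_\brho\,\rho_2^*\htheta^0_\brho+(\text{correction}),
$$
where the correction collects the negative-$z$ parts, i.e.\ the unstable $(0,2)$ contributions $(a,b)/(z_1+z_2)$.

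On the B-side, I will use the standard expansion of the Bergman kernel in terms of the $\theta$'s. By the Rauch variational formula for $B$ together with the A-cycle normalization, $\omega_{0,2}(p_1,p_2)$ decomposes as a sum over $\brho$ of $\theta^0_\brho(p_1)\theta^0_\brho(p_2)$-type terms, after the coordinate change to $\hx$ given by $d/d\hx$ acting on both factors. I will not rely on that expansion in the form just stated. Instead I will derive the identity by differentiating the disk identity \eqref{eqn:OpenLeafDerivative} in $\tau$, exactly as in the proof of Lemma \ref{lem:OpenLeaf}. The relation $\partial_{\tau_a}(\hy\,d\hx)=\int B$ along the cycle or path dual to $\tau_a$ expresses $\sum_\brho\htheta^0_\brho\otimes\htheta^0_\brho$ as $-2\,B$ plus the diagonal term. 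The diagonal term appears only when $\bfeta_1=\bfeta_2$, and there it matches $\frac{d\hX_1d\hX_2}{(\hX_1-\hX_2)^2}$, which is the degree-zero, $\cS=\one$ piece on the A-side. This yields $(\rho_1\times\rho_2)^*\omega_{0,2}-\delta\,\frac{d\hX_1d\hX_2}{(\hX_1-\hX_2)^2}=-\partial_1\partial_2F_{0,2}\,d\hX_1d\hX_2$, and normalization then gives the proposition.

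The main obstacle is the independence of the two local expansions. For different branes, or for inner annuli, the B-side function $\int\!\!\int B$ is multivalued. I must show that after the subtraction of the residue and diagonal terms its Laurent coefficients agree termwise, with the right twisting-factor phases $\eps_\eta$ and the fractional-framing root choices. My first approach is to verify the formula on its derivatives only, since derivatives are single-valued on $A_1\times A_2$. I will then use the no-$d_i=0$ normalization on both sides to fix the primitive, checking that $F_{0,2}$ contains no monomials with $d_i=0$, which holds because $\mu_i\neq0$.
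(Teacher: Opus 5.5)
There is a genuine gap: both identities your argument rests on are false, and the real obstruction is never addressed.

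\textbf{The two identities.} On the A-side, write $F_{0,2}=[z_1^{-1}z_2^{-1}]G$ as in \eqref{eqn:AmodelAnnCanonical}. Then \eqref{eqn:AmodelXiDerivative} gives $\frac{1}{\sfa_{\bff_i}}\tX_i\partial_{\tX_i}F_{0,2}=[z_i^{0}z_j^{-1}]G$.
\begin{itemize}
\item The weighted sum $E\coloneqq\frac{1}{\sfa_{\bff_1}}\tX_1\partial_{\tX_1}+\frac{1}{\sfa_{\bff_2}}\tX_2\partial_{\tX_2}$ produces $[z_1^0z_2^0](z_1+z_2)G$, which does cancel the $\frac{1}{z_1+z_2}$ in \eqref{eqn:SIdentity}.
\item Your product $\tX_1\partial_{\tX_1}\tX_2\partial_{\tX_2}$ instead produces $\sfa_{\bff_1}\sfa_{\bff_2}[z_1^0z_2^0]G$. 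Here $\frac{1}{z_1+z_2}$ survives and couples all powers of $z_1,z_2$, so there is no reduction to products of $(\cdot)_+$ parts.
\end{itemize}
On the B-side, no identity $\sum_\brho\htheta^0_\brho\otimes\htheta^0_\brho=-2B+(\text{diagonal})$ exists. The left side has double poles along $\{p_\brho\}\times\Cbar_q$, which $B$ does not. For $\bfeta_1\neq\bfeta_2$, an equality on the open set $U_{\bfeta_1}\times U_{\bfeta_2}$ would continue analytically to all of $\Cbar_q\times\Cbar_q$, a contradiction. The relation $\partial_{\tau_a}\omega_{0,1}=\int_{B_a}\omega_{0,2}$ is a one-point statement and yields no such decomposition. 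What is true on both sides is only the translation identity for the sum operator, $E\chF_{0,2}=\frac12\sum_\brho\hxi^0_{1,\brho}\hxi^0_{2,\brho}=-EF_{0,2}$, which is Lemma \ref{lem:AnnMirrorDer}.

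\textbf{The missing idea.} The most your method can give is $E(\chF_{0,2}+F_{0,2})=0$. But $E$ annihilates every monomial $\hX_1^{\mu_1}\hX_2^{\mu_2}$ with $\mu_1/\sfa_{\bff_1}+\mu_2/\sfa_{\bff_2}=0$. Your normalization (no monomials with some $d_i=0$) does not exclude these. They occur whenever $\sgn(\bff_1)=-\sgn(\bff_2)$, or when one brane is inner so that $\mu_i$ takes both signs. Only for two outer phases with $\sgn(\bff_1)=\sgn(\bff_2)$ is $E$ injective, and only there does the argument close.

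This kernel, not multivaluedness, is the real difficulty; the residue-free primitives on $A_1\times A_2$ already take care of multivaluedness. Your proposal has no mechanism for the kernel. The paper handles it in two steps:
\begin{enumerate}
\item It uses special geometry $\partial_{\tau_a}\omega_{0,2}=\int_{B_a}\omega_{0,3}$, together with the $(0,3)$ graph sums (a primary leaf on the B-side matched with an insertion of $H_a$ on the A-side), to show that all $\tau_a$-derivatives of the two sides agree. This reduces the problem to the limit $q\to0$, where everything localizes to an affine chart $\cX_\sigma$ and both potentials are power series in positive powers.
\item It notes that the coefficients $N_{\mu_1,\mu_2}$ and $\chN_{\mu_1,\mu_2}$ are polynomials in the framings $\sff_1,\sff_2$ (Lemma \ref{lem:AnnRational} on the B-side), hence rational functions of $\sff$. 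The exceptional condition holds only for non-generic $\sff$, so equality for all but finitely many $\sff$ forces equality for every $\sff$.
\end{enumerate}
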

    
We start with the following lemma.

\begin{lemma}\label{lem:AnnMirrorDer}
For any $\bfeta_1, \bfeta_2 \in J_\Sigma$, we have
$$
    \left( \frac{1}{\sfa_{\bff_1} }\hX_1 \frac{\partial}{\partial \hX_1} + \frac{1}{\sfa_{\bff_2} }\hX_2 \frac{\partial}{\partial \hX_2} \right)\chF_{0,2}(q; \hX_1, \hX_2) 
    = - \left( \frac{1}{\sfa_{\bff_1} }\tX_1 \frac{\partial}{\partial \tX_1} + \frac{1}{\sfa_{\bff_2} }\tX_2 \frac{\partial}{\partial \tX_2} \right) F_{0,2}(\btau; \tX_1, \tX_2).
$$
\end{lemma}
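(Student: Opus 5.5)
The plan is to identify both sides with a single expression built from the open leaves of Lemma \ref{lem:OpenLeaf}. The operator $\frac{1}{\sfa_{\bff_i}}\hX_i\frac{\partial}{\partial\hX_i}$ equals $-\frac{d}{d\hx}$ pulled back by $\rho_i$, and $\tX_i\frac{\partial}{\partial\tX_i}=\hX_i\frac{\partial}{\partial\hX_i}$ under the open mirror map. So the operator in the statement is the sum of the $\hx$-derivatives in the two variables, pulled back to $A_1\times A_2$, and the same operator appears on the A-side.

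\textbf{B-side.} I would use the standard identity for the fundamental bidifferential in terms of the $\theta$'s. Away from $\Crit$, the total derivative $(\frac{d}{d\hx(p_1)}+\frac{d}{d\hx(p_2)})$ applied to $B(p_1,p_2)/(d\hx(p_1)d\hx(p_2))$ is controlled by the residues at the critical points. This gives the known formula
$$
\Bigl(\tfrac{d}{d\hx_1}+\tfrac{d}{d\hx_2}\Bigr)\frac{B(p_1,p_2)}{d\hx_1 d\hx_2}
=\sum_{\bsi\in I_\Sigma}\frac{\theta^0_{\bsi}(p_1)}{d\hx_1}\frac{\theta^0_{\bsi}(p_2)}{d\hx_2}
$$
up to a normalizing constant. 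It holds because the left side, viewed in $p_1$, is meromorphic with poles only at $\Crit$, and its principal parts match the right side. The diagonal double pole is killed by the translation-invariant operator: the subtracted term $\frac{d\hX_1d\hX_2}{(\hX_1-\hX_2)^2}$ is annihilated by the corresponding Euler-type operator only when $\sfa_{\bff_1}=\sfa_{\bff_2}$, which holds if $\bfeta_1=\bfeta_2$. A-periods fix the holomorphic ambiguity.

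Integrating once in each variable with the residue-free normalization of Section \ref{sect:BmodelAnn} then gives
$$
\Bigl(\tfrac{1}{\sfa_{\bff_1}}\hX_1\partial_{\hX_1}+\tfrac{1}{\sfa_{\bff_2}}\hX_2\partial_{\hX_2}\Bigr)\chF_{0,2}=c\sum_{\bsi}\xi^0_{1,\bsi}\,\xi^0_{2,\bsi},
$$
with the constant $c$ and signs tracked. Here $\xi^0_{i,\bsi}=[z^0]\sum_{\brho}\hxi^{\brho}_i\chR_{\brho}^{\spa\bsi}$.

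\textbf{A-side.} I would apply the same operator to \eqref{eqn:AmodelAnnCanonical}. By \eqref{eqn:AmodelXiDerivative}, $\frac{1}{\sfa_{\bff_i}}\tX_i\partial_{\tX_i}$ acts as multiplication by $z_i^{-1}$ before extracting coefficients. This turns the extraction $[z_1^{-1}z_2^{-1}]$ of $(z_1^{-1}+z_2^{-1})\double{\frac{\phi}{z_1-\hat\psi},\frac{\phi}{z_2-\hat\psi}}_{0,2}$ into an extraction applied to $\frac{z_1+z_2}{z_1z_2}$ times that correlator. By \eqref{eqn:SIdentity}, the factor $(z_1+z_2)$ times the two-point function factorizes as $\sum_\alpha\double{\phi_\alpha,\cdot}\double{\phi^\alpha,\cdot}$. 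Expanding in the normalized canonical basis $\hat\phi_{\brho}(t)$, this becomes $\sum_{\brho}(\txi_1 S)^{\brho}(\txi_2 S)^{\brho}$ at the $z^0$ level, taking nonnegative parts. The negative powers drop out because of the coefficient extraction.

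\textbf{Matching.} Lemma \ref{lem:OpenLeaf} identifies each factor $(\txi_iS)^{\brho}_+$ with $\hxi^{\brho}_i/\sqrt{-2}$. The unitarity $R^T(-z)R(z)=\one$ (equivalently for $\chR$) converts $\sum_{\bsi}\xi^0_{1,\bsi}\xi^0_{2,\bsi}$ into $\sum_{\brho}\hxi^{0}_{1,\brho}\hxi^0_{2,\brho}$ at degree zero, since $R=\one+O(z)$. Together these yield equality up to the sign $-1$, with the $(\sqrt{-2})^2=-2$ absorbing constants.

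\textbf{Main obstacle.} I expect the main obstacle to be the B-side identity and its normalization: checking the constant and sign, and handling the residue-subtraction and constant-term conventions on annuli for inner branes with fractional framing. In particular, I need to verify that the subtracted diagonal term and the zero-mode removals commute with the operator. I would check this first on $\bC^3$ to pin down the constant.
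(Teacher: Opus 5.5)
Your proposal is correct and follows essentially the same route as the paper. The paper's proof is the chain: the B-side identity $\bigl(\frac{1}{\sfa_{\bff_1}}\hX_1\frac{\partial}{\partial\hX_1}+\frac{1}{\sfa_{\bff_2}}\hX_2\frac{\partial}{\partial\hX_2}\bigr)\chF_{0,2}=\frac12\sum_{\brho}\hxi^0_{1,\brho}\hxi^0_{2,\brho}$, then \eqref{eqn:KeyLeafEqn} at order $z^0$, then \eqref{eqn:SIdentity}, then \eqref{eqn:AmodelXiDerivative}. The one difference is that the paper imports the B-side identity from the proof of \cite[Proposition 7.4]{flz2020remodeling}, whereas you rederive it from principal parts and A-periods. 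That comparison gives $c=\tfrac12$, so $\tfrac12(\sqrt{-2})^2=-1$ yields exactly the stated sign. Also, the appeal to unitarity is unnecessary, since $\xi^0_{i,\bsi}=\hxi^0_{i,\bsi}$ by definition.
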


\begin{proof}
We make the following computation, following the proof of \cite[Proposition 7.4]{flz2020remodeling}:
\begin{align*}
    & \left( \frac{1}{\sfa_{\bff_1} }\hX_1 \frac{\partial}{\partial \hX_1} + \frac{1}{\sfa_{\bff_2} }\hX_2 \frac{\partial}{\partial \hX_2} \right) \chF_{0,2}(q; \hX_1, \hX_2) \\
    & = \frac{1}{2} \sum_{\brho \in I_\Sigma} \hxi_{1, \brho}^0(\hX_1) \hxi_{2, \brho}^0(\hX_2)  \\
    & = - [z_1^0z_2^0] \sum_{\brho, \bsi_1, \bsi_2 \in I_\Sigma} 
    S^{\widehat{\underline{\brho}} }_{\spa \bsi_1}(z_1) S^{\widehat{\underline{\brho}} }_{\spa \bsi_2}(z_2) 
    \txi_{1}^{\bsi_1}(z_1, \tX_1) \txi_{2}^{\bsi_2}(z_2, \tX_2)
    \bigg|_{\substack{\su_1 = \sfa \\ \su_2 = \sfb}} \\
    & = - [z_1^0z_2^0] (z_1 + z_2) \sum_{\bsi_1, \bsi_2 \in I_\Sigma} 
    \double{ \frac{\phi_{\bsi_1}}{z_1 - \hat{\psi}}, \frac{\phi_{\bsi_2}}{z_2 - \hat{\psi}} }_{0,2}^{\cX,\bT'}
    \txi_{1}^{\bsi_1}(z_1, \tX_1) \txi_{2}^{\bsi_2}(z_2, \tX_2)
    \bigg|_{\substack{\su_1 = \sfa \\ \su_2 = \sfb}}  \\
    & = - [z_1^{-1}z_2^{-1}] \left( \frac{1}{\sfa_{\bff_1} }\tX_1 \frac{\partial}{\partial \tX_1} + \frac{1}{\sfa_{\bff_2} }\tX_2 \frac{\partial}{\partial \tX_2} \right) \sum_{\bsi_1, \bsi_2 \in I_\Sigma} 
    \double{ \frac{\phi_{\bsi_1}}{z_1 - \hat{\psi}}, \frac{\phi_{\bsi_2}}{z_2 - \hat{\psi}} }_{0,2}^{\cX,\bT'}
    \txi_{1}^{\bsi_1}(z_1, \tX_1) \txi_{2}^{\bsi_2}(z_2, \tX_2)
    \bigg|_{\substack{\su_1 = \sfa \\ \su_2 = \sfb}} \\
    & = - \left( \frac{1}{\sfa_{\bff_1} }\tX_1 \frac{\partial}{\partial \tX_1} + \frac{1}{\sfa_{\bff_2} }\tX_2 \frac{\partial}{\partial \tX_2} \right) F_{0,2}(\btau; \tX_1, \tX_2)
\end{align*}
where the second equality follows from \eqref{eqn:KeyLeafEqn} and the last equality uses the expression \eqref{eqn:AmodelAnnCanonical}.
\end{proof}

\begin{proof}[Proof of Proposition \ref{prop:AnnMirror}]
When $\bfeta_1$, $\bfeta_2$ are both outer and $\sgn(\bff_1) = \sgn(\bff_2)$, the proposition is implied by Lemma \ref{lem:AnnMirrorDer}. To handle the other cases, we first use the \emph{special geometry} property \cite{EO07} of $\omega_{0,2}$ to reduce the proposition to a statement in the limit $q \to 0$, or equivalently, $\tQ,\tau''\to 0$. Recall from \eqref{eqn:btauHBasis} that
$$
    \btau - \btau_0 = \sum_{a = 1}^{\text{$\fp$}} \btau_a \Hbar_a^{\bT'}
$$
where for $a = \fp'+1, \dots,\fp$, we set $\Hbar_a^{\bT'} = \one_{b_{3+a}}$. Then, for $a = 1, \dots, \fp$, there exits a cycle $B_a \in H_1(\Cbar_q, \Dbar_q^\infty; \bC)$, where $\Dbar_q^\infty$ denotes the set of punctures on $\Cbar_q$, such that
$$
    \frac{\partial \omega_{0,2}}{\partial \tau_a}(p_1, p_2) = \int_{p_{3} \in B_a} \omega_{0,3}(p_1, p_2, p_3).
$$
Then, $\frac{\partial \chF_{0,2}}{\partial \tau_a}(q; \hX_1, \hX_2)$ is obtained by the local primitive of $(\rho_1 \times \rho_2)^* \int_{p_{3} \in B_a} \omega_{0,3}(p_1, p_2, p_3)$ as in Section \ref{sect:BmodelAnn}, and the descendant graph sum formula for $\omega_{0,3}$ in Theorem \ref{thm:DOSS} gives a graph sum formula for $\frac{\partial \chF_{0,2}}{\partial \tau_a}$ where for the third factor we use the \emph{primary leaf} which is the integration
$$
    \int_{p_{3} \in B_a}  \frac{1}{\sqrt{-2}} \theta_{\bsi}^k(p_3)
$$
on the ordinary leaf. On the other hand, we have by \eqref{eqn:AmodelAnnCanonical} that
$$
    \frac{\partial F_{0,2}}{\partial \tau_a}(\btau; \tX_1, \tX_2) = [z_1^{-1}z_2^{-1}] \sum_{\bsi_1, \bsi_2 \in I_\Sigma}
    \double{\frac{\phi_{\bsi_1}}{z_1 - \hat{\psi}}, \frac{\phi_{\bsi_2}}{z_2 - \hat{\psi}}, H_a}_{0,3}^{\cX,\bT'}
    \txi_{1}^{\bsi_1}(z_1, \tX_1) \txi_{2}^{\bsi_2}(z_2, \tX_2)  \bigg|_{\bT_{\sff}}.
$$
The graph sum formula for the genus-$0$, $3$-pointed descendant potential in Theorem \ref{thm:Zong} gives a graph sum formula for $\frac{\partial F_{0,2}}{\partial \tau_a}$ where the third ordinary leaf is replaced by the \emph{primary leaf}
$$
    [z^0] \sum_{\brho \in I_\Si} H_a^{\brho} R(-z)_{\brho}^{\spa \bsi} 
$$
with $H_a^{\brho}$ defined by $H_a = \sum_{\brho \in I_\Si} H_a^{\brho} \hat{\phi}_{\brho}(\tau)$. The primary leaves are identified up to a sign in the proof of \cite[Theorem 7.10]{flz2020remodeling}. Together with the identification of descendant weights in Theorem \ref{thm:GraphSumId} and the identification of open leaves in Lemma \ref{lem:OpenLeaf}, we have
$$
    \frac{\partial \omega_{0,2}}{\partial \tau_a}(p_1, p_2) = - \frac{\partial F_{0,2}}{\partial \tau_a}(\btau; \tX_1, \tX_2) 
$$
for all $a$. This reduces the proposition to
\begin{equation}\label{eqn:AnnMirrorLT}
    \chF_{0,2}(0; \hX_1, \hX_2) = - F_{0,2}(0; \tX_1, \tX_2) 
\end{equation}
where the ``$0$'' on the right-hand side denotes the limit $\tQ,\tau''\to 0$.

Observe that the two sides of \eqref{eqn:AnnMirrorLT} are nonzero only if the 2-cones $\tau_1$, $\tau_2$ associated to the open phases $\bfeta_1$, $\bfeta_2$ are contained in a common 3-cone $\sigma$. Such $\sigma$ is unique unless $\bfeta_1 = \bfeta_2$ is inner, in which case we combine the argument below for the two contributing choices of $\sigma$. Given $\sigma$, $F_{0,2}(0; \tX_1, \tX_2)$ (resp. $\chF_{0,2}(0; \hX_1, \hX_2)$) is the A-model (resp. B-model) annulus potential of the affine toric Calabi-Yau 3-orbifold $\cX_\sigma$ determined by $\sigma$ in the limit $q \to 0$. Here, the open phases $\bfeta_1$, $\bfeta_2$ are viewed as \emph{outer} phases in $\cX_\sigma$. We may also suppose that the preferred flag $\bff_i$, $i = 1,2$, is the flag $(\tau_i, \sigma)$ in $\cX_\sigma$; in the event that $\bfeta_i$ is inner in $\cX$ and $\bff_i$ is not $(\tau_i, \sigma)$, we may instead use the other flag associated to $\tau_i$ for $\bff_i$ and the corresponding A-/B-model open moduli parameters for $\tX_i$, $\hX_i$ respectively, with the definition of $F_{0,2}$, $\chF_{0,2}$ adjusted accordingly.

In view of the above interpretation in the affine situation, we have \emph{power series} expansions
$$
    F_{0,2}(0; \tX_1, \tX_2) = \sum_{\mu_1, \mu_2 \in \bZ_{>0}} N_{\mu_1, \mu_2} \tX_1^{\mu_1} \tX_2^{\mu_2}, \qquad
    \chF_{0,2}(0; \hX_1, \hX_2) = \sum_{\mu_1, \mu_2 \in \bZ_{>0}} \chN_{\mu_1, \mu_2} \hX_1^{\mu_1} \hX_2^{\mu_2}.
$$
With the convention for the flags $\bff_1$, $\bff_2$ adjusted if necessary, Lemma \ref{lem:AnnMirrorDer} implies that 
\begin{equation}\label{eqn:AnnMirrorLTDer}
     \left( \frac{1}{\sfa_{\bff_1} }\hX_1 \frac{\partial}{\partial \hX_1} + \frac{1}{\sfa_{\bff_2} }\hX_2 \frac{\partial}{\partial \hX_2} \right) \chF_{0,2}(0; \hX_1, \hX_2) 
     = - \left( \frac{1}{\sfa_{\bff_1} }\tX_1 \frac{\partial}{\partial \tX_1} + \frac{1}{\sfa_{\bff_2} }\tX_2 \frac{\partial}{\partial \tX_2} \right) F_{0,2}(0; \tX_1, \tX_2).
\end{equation}
This again implies \eqref{eqn:AnnMirrorLT} if $\sgn(\bff_1) = \sgn(\bff_2)$. It remains to consider the case $\sgn(\bff_1) = - \sgn(\bff_2)$. Here, the differential operators in \eqref{eqn:AnnMirrorLTDer} annihilate terms with 
\begin{equation}\label{eqn:ExceptionalF}
    \frac{\mu_1}{\sfa_{\bff_1}} + \frac{\mu_2}{\sfa_{\bff_2}} = 0,
\end{equation}
while any other $N_{\mu_1, \mu_2}$ is identified with $-\chN_{\mu_1, \mu_2}$.

We now fix $\mu_1, \mu_2 \in \bZ_{>0}$ and consider the dependence of the coefficients $N_{\mu_1, \mu_2}$, $\chN_{\mu_1, \mu_2}$ on the framing $\sff$, which induces the framings $\sff_1$, $\sff_2$ on the two flags. By the virtual localization computation in Section \ref{sect:OpenGWInv}, the annulus invariant $N_{\mu_1, \mu_2}(\sff)$ is a polynomial in $\sff_1$ and $\sff_2$, and thus a rational function in $\sff$. On the other hand, by Lemma \ref{lem:AnnRational} below, $\chN_{\mu_1, \mu_2}(\sff)$ is also a polynomial in $\sff_1$ and $\sff_2$, and thus a rational function in $\sff$. The condition \eqref{eqn:ExceptionalF} is met only when the integers $\sfa_{\bff_1}$, $\sfa_{\bff_2}$ determined by $\sff$ satisfy 
$$
    \sfa_{\bff_2} \sw_{\bff_1, 1} - \sfa_{\bff_1} \sw_{\bff_2, 1} \big|_{\bT_{\sff}} = 0,
$$
which is a non-generic condition. Therefore, $N_{\mu_1, \mu_2}(\sff)$ and $-\chN_{\mu_1, \mu_2}(\sff)$ are equal for all but finitely many $\sff \in \bQ$, and are thus equal for all $\sff$. This implies the desired equality \eqref{eqn:AnnMirrorLT}.
\end{proof}

\begin{lemma}\label{lem:AnnRational}
For any $\bfeta_1, \bfeta_2 \in J_\Sigma$, in the series expansion 
$$
    \chF_{0,2}(0; \hX_1, \hX_2) = \sum_{\mu_1, \mu_2} \chN_{\mu_1, \mu_2} \hX_1^{\mu_1} \hX_2^{\mu_2},
$$
each $\chN_{\mu_1, \mu_2}$ is a polynomial in $\sff_1$ and $\sff_2$.
\end{lemma}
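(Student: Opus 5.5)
In the limit $q \to 0$, only the affine chart $\cX_\sigma \cong [\bC^3/G_\sigma]$ matters. My plan is to make $\omega_{0,2}$ on the limiting curve explicit and then read off the coefficients $\chN_{\mu_1,\mu_2}$ by a residue computation.

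\textbf{The limiting curve.} As $q \to 0$, the components of $C_q$ not adjacent to $\sigma$ decouple. The relevant piece becomes the mirror curve of $\cX_\sigma$. In the coordinates of a flag $\bff=(\tau,\sigma)$ this is $H_\bff = X_\bff^{\fr_\bff} Y_\bff^{-\fs_\bff} + Y_\bff^{\fm_\tau} + 1 = 0$, since the $a_i^\sigma(q)$ with $i \in I_\sigma$ vanish.

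This curve has genus zero. On each connected component one can use a rational uniformizing parameter $t$, for instance $Y_\bff^{1/\fm_\tau}$ (up to a $G_\sigma$-twist), with $X_\bff$ given by a root of $-(1+Y_\bff^{\fm_\tau})Y_\bff^{\fs_\bff}$. The cycles $B_a$ and the $A$-normalization play no role in genus zero. Hence $\omega_{0,2}$ restricted to the component containing $U_{\bfeta_1}$ and $U_{\bfeta_2}$ is simply $dt_1\,dt_2/(t_1-t_2)^2$. If the two regions lie on different components, $\omega_{0,2}$ vanishes there.

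\textbf{Expansion in the framed coordinates.} Next I express $t$ near each puncture as a series in the framed coordinate $\hX_{\bff_i,\eta_i} = \xi_{\sfa_{\bff_i}} X_{\bff_i} Y_{\bff_i}^{\sff_{\bff_i}}$. Exactly as in the disk computation of Section~\ref{sect:LocalExpansion}, Lemma~\ref{lem:47} applied at $q=0$ gives
\[
\log Y_{\bff_i} = \text{const} + \frac{v_i}{\fm_{\tau_i}},
\]
where $v_i$ is a series in $t_{i_1}$ with $r_{i_1} = (-\sff_{\bff_i}\fr_{\bff_i} - \fs_{\bff_i})/\fm_{\tau_i}$. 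Its coefficients are $\Gamma(l(1-r))/(l!\,\Gamma(1-lr))$, which are polynomials in $r$ of degree $l-1$, hence polynomials in $\sff_{\bff_i}$.

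Consequently $t$, and any fixed power of $t$, is $\hX_i$ times a power series in $\hX_i^{\fr_{\bff_i}}$ (with the appropriate orbifold twists). Its coefficients are polynomial in $\sff_{\bff_i}$, because products and exponentials of such series preserve polynomiality in each coefficient. The root-of-unity factors $\xi$ and $\omega$ depend on $\sfa_{\bff_i}$ only through the branch choice. I would check that they combine to constants independent of $\sff$ once one factors through $\hX_i$, as already happens in the disk formula for $W_{\bfeta}$.

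\textbf{Extracting the coefficients.} Write $\chN_{\mu_1,\mu_2}$ as a double residue,
\[
\chN_{\mu_1,\mu_2} = \frac{1}{\mu_1\mu_2}\,\Res_{\hX_1=0}\Res_{\hX_2=0}\,\hX_1^{-\mu_1}\hX_2^{-\mu_2}\Big(\omega_{0,2} - \delta\,\frac{d\hX_1 d\hX_2}{(\hX_1-\hX_2)^2}\Big).
\]
Expand $dt_1\,dt_2/(t_1-t_2)^2 = d_1 d_2 \log(t_1 - t_2)$. When the two punctures are distinct points of the rational component, $t_1 - t_2$ is a nonvanishing series near $(0,0)$. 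When $\bfeta_1 = \bfeta_2$, one first factors out $\hX_1 - \hX_2$. In either case each coefficient is a finite polynomial expression in the coefficients of $t(\hX_1)$ and $t(\hX_2)$, hence polynomial in $\sff_1, \sff_2$.

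\textbf{Expected main obstacle.} The main difficulty is bookkeeping of branches. For fractional framing, $\hX$ lives on an $\sfa_\tau$-fold cover, and one must verify that the chosen $\sfa_\bff$-th roots in \eqref{eqn:LogY} produce coefficients whose only $\sff$-dependence is polynomial, with no stray roots of unity depending on $\sfa$. I would handle this by writing $Y_\bff^{1/\sfa_\bff}$ throughout as $\exp$ of the series $v/(\sfa_\bff\fm_\tau)$ plus the fixed constant. Then $Y_\bff^{\sff_\bff} = \exp(\sff_\bff \log Y_\bff)$ is manifestly polynomial in $\sff_\bff$ coefficientwise, and the constant phases are exactly absorbed by the normalization of $\hX$.
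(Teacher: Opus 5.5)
Your mechanism matches the paper's. $\omega_{0,2}$ does not depend on $\sff$, so all $\sff$-dependence of $\chN_{\mu_1,\mu_2}$ comes from re-expanding an $\sff$-independent local coordinate in $\hX_i$, and Lemma~\ref{lem:47} makes that re-expansion polynomial in $\sff_i$. However, the step ``the $q=0$ curve has genus zero, hence $\omega_{0,2}=dt_1\,dt_2/(t_1-t_2)^2$'' is false in general. The curve $X_{\bff}^{\fr_{\bff}}Y_{\bff}^{-\fs_{\bff}}+Y_{\bff}^{\fm_\tau}+1=0$ is a nondegenerate trinomial curve, so its compactification has genus equal to the number of interior lattice points of the triangle of $\sigma$. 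Take $\cX=[\bC^3/\bZ_3]$, where $\fr=3$, $\fs=1$, $\fm=1$. The curve is $X^3+Y^2+Y=0$, a smooth elliptic curve. It has no rational uniformizer; in particular $Y_{\bff}$ is not one, since $X_{\bff}$ is three-valued over it. So $\omega_{0,2}$ is the $A$-normalized Bergman kernel, and your $\log(t_1-t_2)$ extraction does not apply.

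The explicit formula is also unnecessary, and dropping it gives the paper's proof. Expand $\omega_{0,2}-\delta_{\bfeta_1,\bfeta_2}\,dX_{\bff_1}dX_{\bff_2}/(X_{\bff_1}-X_{\bff_2})^2$ in the unframed coordinates $X_{\bff_i}$. These are genuine local coordinates at the punctures and involve no $\sff$, so the coefficients are $\sff$-independent. Then substitute $X_{\bff_i}$ as a series in $\hX_i$. Your device of factoring out $\hX_1-\hX_2$ still handles the change of diagonal subtraction, which adds $d_1d_2\log\frac{X_{\bff}(\hX_1)-X_{\bff}(\hX_2)}{\hX_1-\hX_2}$.

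Second, the check you defer, that the roots of unity combine into $\sff$-independent constants, would fail. From \eqref{eqn:LogY} and the definition of $\hX_{\bff,\eta}$, at $q=0$ one finds $X_{\bff}=c\,\hX\,e^{-\sff_{\bff}v/\fm_\tau}$, where $c=\exp\bigl(\pi\sqrt{-1}/\sfa_{\bff}+\pi\sqrt{-1}\,\sff_{\bff}(1-2\etabar)/\fm_\tau\bigr)$. Also $t_{i_1^{\bff}}=e^{\pi\sqrt{-1}(1-2\etabar)\fs_{\bff}/\fm_\tau}(c\hX)^{\fr_{\bff}}$. So $X_{\bff}$ is a series in $c\hX$ whose coefficients are polynomials in $\sff_{\bff}$. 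What the argument actually yields is $\chN_{\mu_1,\mu_2}=c_1^{\mu_1}c_2^{\mu_2}P_{\mu_1,\mu_2}(\sff_1,\sff_2)$ with $P_{\mu_1,\mu_2}$ polynomial. The prefactor is not polynomial in $\sff$; for $\bC^3$, $c=e^{\pi\sqrt{-1}(1+\sfb)/\sfa}$.

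The paper's sketch suppresses the same factor when it asserts that $Y_{\bff_i}^{-\sff_i}$ has polynomial coefficients. This is harmless for Proposition~\ref{prop:AnnMirror}. The A-model factors $D_{\bff,d,\lambda}\,\eps^{\bff}_{\eta}$ carry the same $c^{d}$, as the phase $\xi_{\sfa_{\bff}}^{-d}(\xi_{\sfa_{\bff}\fm_\tau}\omega_{\sfa_{\bff}\fm_\tau}^{\etabar})^{\sfa_{\bff}\fm_\tau(\cdots)}$ in $W_{\bfeta}$ shows, so one compares $P_{\mu_1,\mu_2}$ with its A-model counterpart. You should state and prove the lemma in this corrected form rather than claim the phases cancel.
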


\begin{proof}
Note that $\omega_{0,2}$ is independent of the framing $\sff$. On $U_{\bfeta_1} \times U_{\bfeta_2}$, consider the \emph{unframed} annulus potential
$$
    \chF^{\text{un}}_{0,2}(0; X_{\bff_1}, X_{\bff_2}) = \sum_{\mu_1, \mu_2} \chN^{\text{un}}_{\mu_1, \mu_2} X_{\bff_1}^{\mu_1} X_{\bff_2}^{\mu_2}
$$
defined by the local primitive of $\omega_{0,2} - \delta_{\bfeta_1, \bfeta_2} \frac{dX_{\bff_1} dX_{\bff_2}}{(X_{\bff_1} - X_{\bff_2})^2}$ in the unframed coordinates $X_{\bff_1}$, $X_{\bff_2}$, where each $\chN^{\text{un}}_{\mu_1, \mu_2}$ is a constant independent of $\sff$. Then, we have
$$
    \chF_{0,2}(0; \hX_1, \hX_2) = \sum_{\mu_1, \mu_2} \chN^{\text{un}}_{\mu_1, \mu_2} 
    \left(\xi_{\sfa_{\bff_1}}^{-1}(Y_{\bff_1}^{-\sff_1}) \right)^{\mu_1 - 1} 
    \left(\xi_{\sfa_{\bff_2}}^{-1}(Y_{\bff_2}^{-\sff_2}) \right)^{\mu_2 - 1}
    \hX_1^{\mu_1} \hX_2^{\mu_2}.
$$
For $i = 1, 2$, as computed in Section \ref{sect:LocalExpansion}, $Y_{\bff_i}^{-\sff_i}$ can be expanded as a series in $\hX_i$ whose coefficients are polynomials in $\sff_i$. The lemma then follows.
\end{proof}

\subsection{All-genus open mirror symmetry}
Finally, we prove the main theorem on all-genus open mirror symmetry.

\begin{theorem}\label{thm:AllGenusMirror}
For any $g \in \bZ_{\ge 0}$, $n \in \bZ_{\ge 1}$, and $\bfeta_1, \dots, \bfeta_n \in J_\Sigma$, we have
$$
    \chF_{g,n}(q; \hX_1, \dots, \hX_n) = (-1)^{g-1+n} F_{g,n}(\btau; \tX_1, \dots, \tX_n) 
$$
under the open-closed mirror map $\btau = \btau(q)$, $\tX_i = \tX_i(q, \hX_i)$.
\end{theorem}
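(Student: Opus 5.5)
The proof splits by $(g,n)$. The two unstable cases are already done. For $(g,n)=(0,1)$ the sign is $(-1)^{0-1+1}=+1$, so the claim is exactly Proposition \ref{prop:DiskMirror}. For $(g,n)=(0,2)$ the sign is $(-1)^{0-1+2}=-1$, which is Proposition \ref{prop:AnnMirror}. It remains to treat the stable range $2g-2+n>0$, where I will compare the two open graph sums term by term.

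For $2g-2+n>0$, Theorem \ref{thm:BmodelOpenGraph} writes $\chF_{g,n}(q;\hX_1,\dots,\hX_n)$ as a sum of $w_B^O(\vGa)/|\Aut(\vGa)|$ over $\vGa\in\bGa_{g,n}(\cX)$. Theorem \ref{thm:AmodelOpenGraph} writes $F_{g,n}(\btau;\tX_1,\dots,\tX_n)$ as the analogous sum of $w_A^O(\vGa)/|\Aut(\vGa)|$ over the same index set. Both are taken at the specialization $u=1$, i.e. $\su_1=\sfa$, $\su_2=\sfb$. So it suffices to show $w_B^O(\vGa)=(-1)^{g-1+n}w_A^O(\vGa)$ for each labeled graph $\vGa$.

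Each weight factors into vertex, edge and dilaton-leaf contributions times the $n$ ordinary-leaf contributions. The open weights differ from the descendant weights $w_B^{\bu}$, $w_A^{\bu}$ only at the ordinary leaves. Theorem \ref{thm:GraphSumId} gives
$$w_B^{\bu}(\vGa)\big|_{\frac{1}{\sqrt{-2}}\htheta^k_{\brho}(p_i)=-(\tilde u_i)^{\brho}_k}=(-1)^{g(\vGa)-1+n}w_A^{\bu}(\vGa)$$
under the closed mirror map. Its proof matches the non-leaf factors with sign $(-1)^{g(\vGa)-1}$ and each ordinary leaf with sign $-1$, using $(\check\cL^{\bu})^{\bsi}_k=-(\cL^{\bu})^{\bsi}_k$. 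Hence the non-leaf parts of $w_B^O(\vGa)$ and $w_A^O(\vGa)$ agree up to $(-1)^{g(\vGa)-1}$. This part does not depend on the open data at all, so it holds no matter which phases $\bfeta_i$ are attached to the leaves.

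For the ordinary leaves I use Lemma \ref{lem:OpenLeaf}, which gives $(\check\cL^O)^{\bsi}_k(l_i)=-(\cL^O)^{\bsi}_k(l_i)$ under the open-closed mirror map, separately for each $i$. The key point is that the identification \eqref{eqn:KeyLeafEqn} for leaf $i$ involves only the variables $\hX_i$, $\tX_i$ and the phase $\bfeta_i$. So leaves lying on different branes, outer or inner and with different fractional framings, can be matched independently. Multiplying the $n$ leaf identities gives $(-1)^n$, and the graph-level identity follows.

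The only real care needed is to check that the relative sign and normalization of the non-leaf factors, as extracted from Theorem \ref{thm:GraphSumId}, is exactly $(-1)^{g-1}$. I will do this by setting $\bu$ so that the descendant leaves coincide with the open leaves: replace $\bar\bu_i$ by $\txi_i$, and $\frac{1}{\sqrt{-2}}\htheta$ by $\frac{1}{\sqrt{-2}}d\hxi_i$. This substitution is compatible with the primitives defining $\chF_{g,n}$: the open leaf is the residue-free primitive of $\rho_i^*$ of the descendant leaf, with the sign $(-1)^n$ already built into the definition of $\chF_{g,n}$. Summing over graphs then gives the theorem.
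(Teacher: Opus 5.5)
Your proposal is correct and is essentially the paper's own proof. The unstable cases are Propositions \ref{prop:DiskMirror} and \ref{prop:AnnMirror}; in the stable range, Theorem \ref{thm:GraphSumId} together with Lemma \ref{lem:OpenLeaf} gives $w_B^O(\vGa)=(-1)^{g(\vGa)-1+n}w_A^O(\vGa)$ for each graph, and these are summed via Theorems \ref{thm:AmodelOpenGraph} and \ref{thm:BmodelOpenGraph}. The re-substitution of $\bar{\bu}_i$ in your last paragraph is harmless but unnecessary, since the non-leaf factors do not depend on the leaf data.
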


\begin{proof}
The unstable cases $(g, n) = (0,1)$ and $(0,2)$ are Propositions \ref{prop:DiskMirror} and \ref{prop:AnnMirror} respectively. In the stable case $2g - 2 + n > 0$, the identification of descendant weights in Theorem \ref{thm:GraphSumId} and the identification of open leaves in Lemma \ref{lem:OpenLeaf} implies that for any $\vGa\in \bGa_{g,n}(\cX)$, we have the identification of open weights
$$
    w^O_B(\vGa) = (-1)^{g(\vGa)-1+n} w^O_A(\vGa).
$$
This implies the theorem by the graph sum formulas for $F_{g,n}$ and $\chF_{g,n}$ in Theorems \ref{thm:AmodelOpenGraph} and \ref{thm:BmodelOpenGraph} respectively.
\end{proof}


\printbibliography

@article {BCY12,
    AUTHOR = {Bryan, Jim and Cadman, Charles and Young, Ben},
     TITLE = {The orbifold topological vertex},
   JOURNAL = {Adv. Math.},
  FJOURNAL = {Advances in Mathematics},
    VOLUME = {229},
      YEAR = {2012},
    NUMBER = {1},
     PAGES = {531--595},
      ISSN = {0001-8708,1090-2082},
   MRCLASS = {14N35 (14D23 14J32)},
  MRNUMBER = {2854183},
       DOI = {10.1016/j.aim.2011.09.008},
       URL = {https://doi.org/10.1016/j.aim.2011.09.008},
}

@article {MOOP11,
    AUTHOR = {Maulik, D. and Oblomkov, A. and Okounkov, A. and
              Pandharipande, R.},
     TITLE = {Gromov-{W}itten/{D}onaldson-{T}homas correspondence for toric
              3-folds},
   JOURNAL = {Invent. Math.},
  FJOURNAL = {Inventiones Mathematicae},
    VOLUME = {186},
      YEAR = {2011},
    NUMBER = {2},
     PAGES = {435--479},
      ISSN = {0020-9910,1432-1297},
   MRCLASS = {14N35 (14M25)},
  MRNUMBER = {2845622},
MRREVIEWER = {Hsian-Hua\ Tseng},
       DOI = {10.1007/s00222-011-0322-y},
       URL = {https://doi.org/10.1007/s00222-011-0322-y},
}

@incollection{Viro01,
	author = {Viro, Oleg},
	booktitle = {European {C}ongress of {M}athematics, {V}ol. {I} ({B}arcelona, 2000)},
	mrclass = {14P25},
	mrnumber = {1905317},
	mrreviewer = {Sergey M. Finashin},
	pages = {135--146},
	publisher = {Birkh\"{a}user, Basel},
	series = {Progr. Math.},
	title = {Dequantization of real algebraic geometry on logarithmic paper},
	volume = {201},
	year = {2001}}

@article{IV96,
	author = {Itenberg, Ilia and Viro, Oleg},
	fjournal = {The Mathematical Intelligencer},
	issn = {0343-6993},
	journal = {Math. Intelligencer},
	mrclass = {14P25},
	mrnumber = {1413249},
	mrreviewer = {Eugenii Shustin},
	number = {4},
	pages = {19--28},
	title = {Patchworking algebraic curves disproves the {R}agsdale conjecture},
	volume = {18},
	year = {1996}}

@article{GT13,
	author = {Gholampour, Amin and Tseng, Hsian-Hua},
	fjournal = {Michigan Mathematical Journal},
	issn = {0026-2285},
	journal = {Michigan Math. J.},
	mrclass = {14N35 (53D45)},
	mrnumber = {3160540},
	mrreviewer = {Qile Chen},
	number = {4},
	pages = {753--768},
	title = {On computations of genus 0 two-point descendant {G}romov-{W}itten invariants},
	volume = {62},
	year = {2013}}

@article{flz2020remodeling,
	author = {Fang, Bohan and Liu, Chiu-Chu Melissa and Zong, Zhengyu},
	fjournal = {Journal of the American Mathematical Society},
	issn = {0894-0347},
	journal = {J. Amer. Math. Soc.},
	mrclass = {14N35 (14J33)},
	mrnumber = {4066474},
	number = {1},
	pages = {135--222},
	title = {On the remodeling conjecture for toric {C}alabi-{Y}au 3-orbifolds},
	volume = {33},
	year = {2020}}

@article{flz2020affine,
	author = {Fang, Bohan and Liu, Chiu-Chu Melissa and Zong, Zhengyu},
	fjournal = {Algebraic Geometry},
	issn = {2313-1691},
	journal = {Algebr. Geom.},
	mrclass = {14N35 (14J33)},
	mrnumber = {4061327},
	number = {2},
	pages = {192--239},
	title = {All-genus open-closed mirror symmetry for affine toric {C}alabi-{Y}au 3-orbifolds},
	volume = {7},
	year = {2020}}

@article{Iritani09,
	author = {Iritani, Hiroshi},
	fjournal = {Advances in Mathematics},
	issn = {0001-8708},
	journal = {Adv. Math.},
	mrclass = {53D37 (14J33 14N35 53D45)},
	mrnumber = {2553377},
	mrreviewer = {Hsian-Hua Tseng},
	number = {3},
	pages = {1016--1079},
	title = {An integral structure in quantum cohomology and mirror symmetry for toric orbifolds},
	volume = {222},
	year = {2009}}

@article{Eynard11,
	archiveprefix = {arXiv},
	author = {B. Eynard},
	date-modified = {2025-04-17 17:27:18 +0800},
	eprint = {1104.0176},
	% primaryclass = {math-ph},
	title = {Intersection numbers of spectral curves},
	year = {2011},
	bdsk-url-1 = {https://arxiv.org/abs/1104.0176}}

@phdthesis{Zong15,
	author = {Zong, Zhengyu},
	school = {Columbia University},	
	title = {Equivariant {G}romov-{W}itten {T}heory of {GKM} {O}rbifolds},
	year = {2015}
}

@article{Zhu15,
	author = {Zhu, Shengmao},
	fjournal = {Mathematical Research Letters},
	issn = {1073-2780},
	journal = {Math. Res. Lett.},
	mrclass = {81T20 (14H81)},
	mrnumber = {3342249},
	number = {2},
	pages = {633--643},
	title = {On a proof of the {B}ouchard-{S}ulkowski conjecture},
	volume = {22},
	year = {2015}}

@article{Teleman12,
	author = {Teleman, Constantin},
	fjournal = {Inventiones Mathematicae},
	issn = {0020-9910},
	journal = {Invent. Math.},
	mrclass = {57R56 (18D10 53D45)},
	mrnumber = {2917177},
	mrreviewer = {Julia Bergner},
	number = {3},
	pages = {525--588},
	title = {The structure of 2{D} semi-simple field theories},
	volume = {188},
	year = {2012}}

@incollection{Ruan06,
	author = {Ruan, Yongbin},
	booktitle = {Gromov-{W}itten theory of spin curves and orbifolds},
	mrclass = {14N35 (14C05)},
	mrnumber = {2234886},
	mrreviewer = {Gianni Ciolli},
	pages = {117--126},
	publisher = {Amer. Math. Soc., Providence, RI},
	series = {Contemp. Math.},
	title = {The cohomology ring of crepant resolutions of orbifolds},
	volume = {403},
	year = {2006}}

@incollection{Ruan02,
	author = {Ruan, Yongbin},
	booktitle = {Symposium in {H}onor of {C}. {H}. {C}lemens ({S}alt {L}ake {C}ity, {UT}, 2000)},
	mrclass = {32S45 (14N35 19M05 32J81 81T30)},
	mrnumber = {1941583},
	mrreviewer = {Anatoly Libgober},
	pages = {187--233},
	publisher = {Amer. Math. Soc., Providence, RI},
	series = {Contemp. Math.},
	title = {Stringy geometry and topology of orbifolds},
	volume = {312},
	year = {2002}}

@article{RZ15,
	author = {Ross, Dustin and Zong, Zhengyu},
	fjournal = {Advances in Mathematics},
	issn = {0001-8708},
	journal = {Adv. Math.},
	mrclass = {14N35},
	mrnumber = {3406532},
	mrreviewer = {Babak Haghighat},
	pages = {1448--1486},
	title = {Cyclic {H}odge integrals and loop {S}chur functions},
	volume = {285},
	year = {2015}}

@article{RZ13,
	author = {Ross, Dustin and Zong, Zhengyu},
	fjournal = {Geometry \& Topology},
	issn = {1465-3060},
	journal = {Geom. Topol.},
	mrclass = {14N35 (05E05 53D45)},
	mrnumber = {3190303},
	mrreviewer = {Jian Xun Hu},
	number = {5},
	pages = {2935--2976},
	title = {The gerby {G}opakumar-{M}ari\~{n}o-{V}afa formula},
	volume = {17},
	year = {2013}}

@article{Mikhalkin00,
	author = {Mikhalkin, G.},
	fjournal = {Annals of Mathematics. Second Series},
	issn = {0003-486X},
	journal = {Ann. of Math. (2)},
	mrclass = {14P05},
	mrnumber = {1745011},
	mrreviewer = {A. Tognoli},
	number = {1},
	pages = {309--326},
	title = {Real algebraic curves, the moment map and amoebas},
	volume = {151},
	year = {2000}}

@article{MNOP06b,
	author = {Maulik, D. and Nekrasov, N. and Okounkov, A. and Pandharipande, R.},
	fjournal = {Compositio Mathematica},
	issn = {0010-437X},
	journal = {Compos. Math.},
	mrclass = {14N35 (14C05)},
	mrnumber = {2264665},
	mrreviewer = {Hsian-Hua Tseng},
	number = {5},
	pages = {1286--1304},
	title = {Gromov-{W}itten theory and {D}onaldson-{T}homas theory. {II}},
	volume = {142},
	year = {2006}}

@article{MNOP06,
	author = {Maulik, D. and Nekrasov, N. and Okounkov, A. and Pandharipande, R.},
	fjournal = {Compositio Mathematica},
	issn = {0010-437X},
	journal = {Compos. Math.},
	mrclass = {14N35 (14J32)},
	mrnumber = {2264664},
	mrreviewer = {Hsian-Hua Tseng},
	number = {5},
	pages = {1263--1285},
	title = {Gromov-{W}itten theory and {D}onaldson-{T}homas theory. {I}},
	volume = {142},
	year = {2006}}

@article{Marino08,
	author = {Mari\~{n}o, Marcos},
	fjournal = {Journal of High Energy Physics. A SISSA Journal},
	issn = {1126-6708},
	journal = {J. High Energy Phys.},
	mrclass = {81T45 (14N35 81T30)},
	mrnumber = {2391060},
	mrreviewer = {Vincent Bouchard},
	number = {3},
	pages = {060, 34},
	title = {Open string amplitudes and large order behavior in topological string theory},
	year = {2008}}

@article{Givental01,
	author = {Givental, Alexander B.},
	fjournal = {International Mathematics Research Notices},
	issn = {1073-7928},
	journal = {Internat. Math. Res. Notices},
	mrclass = {53D45 (14N35)},
	mrnumber = {1866444},
	mrreviewer = {Gilberto Bini},
	number = {23},
	pages = {1265--1286},
	title = {Semisimple {F}robenius structures at higher genus},
	year = {2001}}

@incollection{Givental97,
	author = {Givental, Alexander},
	booktitle = {Integrable systems and algebraic geometry ({K}obe/{K}yoto, 1997)},
	mrclass = {14N35 (14M25 32S05 33C90 53D45)},
	mrnumber = {1672116},
	mrreviewer = {Andreas Gathmann},
	pages = {107--155},
	publisher = {World Sci. Publ., River Edge, NJ},
	title = {Elliptic {G}romov-{W}itten invariants and the generalized mirror conjecture},
	year = {1998}}

@book{Fay73,
	author = {Fay, John D.},
	mrclass = {30A48 (14H15)},
	mrnumber = {335789},
	mrreviewer = {H. M. Farkas},
	pages = {iv+137},
	publisher = {Springer-Verlag, Berlin-New York},
	series = {Lecture Notes in Mathematics, Vol. 352},
	title = {Theta functions on {R}iemann surfaces},
	year = {1973}}

@article{EO07,
	author = {Eynard, B. and Orantin, N.},
	fjournal = {Communications in Number Theory and Physics},
	issn = {1931-4523},
	journal = {Commun. Number Theory Phys.},
	mrclass = {14H15 (14N35 32A27 37K10 37K20 81T45)},
	mrnumber = {2346575},
	mrreviewer = {Vincent Bouchard},
	number = {2},
	pages = {347--452},
	title = {Invariants of algebraic curves and topological expansion},
	volume = {1},
	year = {2007}}

@article{EO15,
	author = {Eynard, B. and Orantin, N.},
	fjournal = {Communications in Mathematical Physics},
	issn = {0010-3616},
	journal = {Comm. Math. Phys.},
	mrclass = {14N35 (14J33 53D45 81R05 81R10)},
	mrnumber = {3339157},
	mrreviewer = {Sergiy Koshkin},
	number = {2},
	pages = {483--567},
	title = {Computation of open {G}romov-{W}itten invariants for toric {C}alabi-{Y}au 3-folds by topological recursion, a proof of the {BKMP} conjecture},
	volume = {337},
	year = {2015}}

@article{DOSS14,
	author = {Dunin-Barkowski, P. and Orantin, N. and Shadrin, S. and Spitz, L.},
	fjournal = {Communications in Mathematical Physics},
	issn = {0010-3616},
	journal = {Comm. Math. Phys.},
	mrclass = {81T45 (14N35 53D45)},
	mrnumber = {3199996},
	mrreviewer = {Wan Keng Cheong},
	number = {2},
	pages = {669--700},
	title = {Identification of the {G}ivental formula with the spectral curve topological recursion procedure},
	volume = {328},
	year = {2014}}

@book{CK99,
	author = {Cox, David A. and Katz, Sheldon},
	isbn = {0-8218-1059-6},
	mrclass = {14J32 (14-02 14M25 14N10 14N35 32G81 32J81 32Q25)},
	mrnumber = {1677117},
	mrreviewer = {Andreas Gathmann},
	pages = {xxii+469},
	publisher = {American Mathematical Society, Providence, RI},
	series = {Mathematical Surveys and Monographs},
	title = {Mirror symmetry and algebraic geometry},
	volume = {68},
	year = {1999}}

@incollection{Chen18,
	author = {Chen, Lin},
	booktitle = {Topological recursion and its influence in analysis, geometry, and topology},
	mrclass = {14N35 (14H10)},
	mrnumber = {3840131},
	mrreviewer = {Sergiy Koshkin},
	pages = {83--102},
	publisher = {Amer. Math. Soc., Providence, RI},
	series = {Proc. Sympos. Pure Math.},
	title = {Bouchard-{K}lemm-{M}arino-{P}asquetti conjecture for {$\Bbb C^3$}},
	volume = {100},
	year = {2018}}

@article{BKMP10,
	author = {Bouchard, Vincent and Klemm, Albrecht and Mari\~{n}o, Marcos and Pasquetti, Sara},
	fjournal = {Communications in Mathematical Physics},
	issn = {0010-3616},
	journal = {Comm. Math. Phys.},
	mrclass = {81T45 (11F23 14N35 81T30)},
	mrnumber = {2628817},
	mrreviewer = {Brad Safnuk},
	number = {3},
	pages = {589--623},
	title = {Topological open strings on orbifolds},
	volume = {296},
	year = {2010}}

@article{BKMP09,
	author = {Bouchard, Vincent and Klemm, Albrecht and Mari\~{n}o, Marcos and Pasquetti, Sara},
	fjournal = {Communications in Mathematical Physics},
	issn = {0010-3616},
	journal = {Comm. Math. Phys.},
	mrclass = {81T45 (32G81 32Q25 81T30)},
	mrnumber = {2480744},
	mrreviewer = {Johannes Walcher},
	number = {1},
	pages = {117--178},
	title = {Remodeling the {B}-model},
	volume = {287},
	year = {2009}}

@article{BCMS13,
	author = {Bouchard, Vincent and Catuneanu, Andrei and Marchal, Olivier and Su\l kowski, Piotr},
	fjournal = {Letters in Mathematical Physics},
	issn = {0377-9017},
	journal = {Lett. Math. Phys.},
	mrclass = {14N35 (14J33 14J81)},
	mrnumber = {3004817},
	mrreviewer = {Siu-Cheong Lau},
	number = {1},
	pages = {59--77},
	title = {The remodeling conjecture and the {F}aber-{P}andharipande formula},
	volume = {103},
	year = {2013}}

@article{AKV02,
	author = {Aganagic, Mina and Klemm, Albrecht and Vafa, Cumrun},
	fjournal = {Zeitschrift f\"{u}r Naturforschung. A. Journal of Physical Sciences},
	issn = {0932-0784},
	journal = {Z. Naturforsch. A},
	mrclass = {81T30 (14J81 32Q25 81T60)},
	mrnumber = {1906661},
	mrreviewer = {Chiu-Chu Melissa Liu},
	number = {1-2},
	pages = {1--28},
	title = {Disk instantons, mirror symmetry and the duality web},
	volume = {57},
	year = {2002}}

@article{AKMV05,
	author = {Aganagic, Mina and Klemm, Albrecht and Mari\~{n}o, Marcos and Vafa, Cumrun},
	fjournal = {Communications in Mathematical Physics},
	issn = {0010-3616},
	journal = {Comm. Math. Phys.},
	mrclass = {81T45 (14N35 81T30)},
	mrnumber = {2117633},
	mrreviewer = {Chiu-Chu Melissa Liu},
	number = {2},
	pages = {425--478},
	title = {The topological vertex},
	volume = {254},
	year = {2005}}

@article{HV00,
	author = {Kentaro Hori and Cumrun Vafa},
	archivePrefix = "arXiv",
	eprint = {hep-th/0002222},
	journal = {HUTP-00/A005},
	title = {Mirror Symmetry},
	year = {2000}}

@article{GP99,
	author = {Graber, T. and Pandharipande, R.},
	fjournal = {Inventiones Mathematicae},
	issn = {0020-9910},
	journal = {Invent. Math.},
	number = {2},
	pages = {487--518},
	title = {Localization of virtual classes},
	volume = {135},
	year = {1999}}

@article{Ross14,
	author = {Ross, Dustin},
	fjournal = {Transactions of the American Mathematical Society},
	issn = {0002-9947},
	journal = {Trans. Amer. Math. Soc.},
	number = {3},
	pages = {1587--1620},
	title = {Localization and gluing of orbifold amplitudes: the {G}romov-{W}itten orbifold vertex},
	volume = {366},
	year = {2014}}

@article{LLLZ09,
	author = {Li, Jun and Liu, Chiu-Chu Melissa and Liu, Kefeng and Zhou, Jian},
	fjournal = {Geometry \& Topology},
	issn = {1465-3060},
	journal = {Geom. Topol.},
	number = {1},
	pages = {527--621},
	title = {A mathematical theory of the topological vertex},
	volume = {13},
	year = {2009}}

@article{Jiang08,
	author = {Jiang, Yunfeng},
	fjournal = {Illinois Journal of Mathematics},
	issn = {0019-2082},
	journal = {Illinois J. Math.},
	mrclass = {14F40 (14A20)},
	mrnumber = {2524648},
	number = {2},
	pages = {493--514},
	title = {The orbifold cohomology ring of simplicial toric stack bundles},
	volume = {52},
	year = {2008}}

@incollection{Givental98,
	author = {Givental, Alexander},
	booktitle = {Topological field theory, primitive forms and related topics ({K}yoto, 1996)},
	mrclass = {14M25 (14J32 14N35)},
	mrnumber = {1653024},
	mrreviewer = {Charles F. Doran},
	pages = {141--175},
	publisher = {Birkh\"{a}user Boston, Boston, MA},
	series = {Progr. Math.},
	title = {A mirror theorem for toric complete intersections},
	volume = {160},
	year = {1998}}

@article{FMN10,
	author = {Fantechi, Barbara and Mann, Etienne and Nironi, Fabio},
	fjournal = {Journal f\"{u}r die Reine und Angewandte Mathematik. [Crelle's Journal]},
	issn = {0075-4102},
	journal = {J. Reine Angew. Math.},
	mrclass = {14M25 (14D23)},
	mrnumber = {2774310},
	mrreviewer = {Hsian-Hua Tseng},
	pages = {201--244},
	title = {Smooth toric {D}eligne-{M}umford stacks},
	volume = {648},
	year = {2010}}

@article{FLT22,
	author = {Fang, Bohan and Liu, Chiu-Chu Melissa and Tseng, Hsian-Hua},
	fjournal = {Forum of Mathematics. Sigma},
	journal = {Forum Math. Sigma},
	mrclass = {14N35 (14J33 14M25 53D37 53D45)},
	mrnumber = {4458543},
	mrreviewer = {William Liu},
	pages = {Paper No. e58, 56},
	title = {Open-closed {G}romov-{W}itten invariants of 3-dimensional {C}alabi-{Y}au smooth toric {DM} stacks},
	volume = {10},
	year = {2022}}

@article{FL13,
	author = {Fang, Bohan and Liu, Chiu-Chu Melissa},
	fjournal = {Communications in Mathematical Physics},
	issn = {0010-3616},
	journal = {Comm. Math. Phys.},
	mrclass = {14N35 (14J33 53D12 53D37 53D45 81T30)},
	mrnumber = {3085667},
	mrreviewer = {Bhupendra Nath Tiwari},
	number = {1},
	pages = {285--328},
	title = {Open {G}romov-{W}itten invariants of toric {C}alabi-{Y}au 3-folds},
	volume = {323},
	year = {2013}}

@article{CCIT15,
	author = {Coates, Tom and Corti, Alessio and Iritani, Hiroshi and Tseng, Hsian-Hua},
	fjournal = {Compositio Mathematica},
	issn = {0010-437X},
	journal = {Compos. Math.},
	mrclass = {14N35 (14A20 14J33 14M25 53D45)},
	mrnumber = {3414388},
	mrreviewer = {Ruifang Song},
	number = {10},
	pages = {1878--1912},
	title = {A mirror theorem for toric stacks},
	volume = {151},
	year = {2015}}

@article{CCK15,
	author = {Cheong, Daewoong and Ciocan-Fontanine, Ionu\c{t} and Kim, Bumsig},
	fjournal = {Mathematische Annalen},
	issn = {0025-5831},
	journal = {Math. Ann.},
	mrclass = {14D20 (14D23 14N35)},
	mrnumber = {3412343},
	mrreviewer = {Alfonso Zamora},
	number = {3-4},
	pages = {777--816},
	title = {Orbifold quasimap theory},
	volume = {363},
	year = {2015}}

@incollection{CR02,
	author = {Chen, Weimin and Ruan, Yongbin},
	booktitle = {Orbifolds in mathematics and physics ({M}adison, {WI}, 2001)},
	mrclass = {53D45 (14N35)},
	mrnumber = {1950941},
	mrreviewer = {Ignasi Mundet-Riera},
	pages = {25--85},
	publisher = {Amer. Math. Soc., Providence, RI},
	series = {Contemp. Math.},
	title = {Orbifold {G}romov-{W}itten theory},
	volume = {310},
	year = {2002}}

@article{BCS05,
	author = {Borisov, Lev A. and Chen, Linda and Smith, Gregory G.},
	fjournal = {Journal of the American Mathematical Society},
	issn = {0894-0347},
	journal = {J. Amer. Math. Soc.},
	mrclass = {14N35 (14C15 14M25)},
	mrnumber = {2114820},
	mrreviewer = {Domenico Fiorenza},
	number = {1},
	pages = {193--215},
	title = {The orbifold {C}how ring of toric {D}eligne-{M}umford stacks},
	volume = {18},
	year = {2005}}

@article{AV00,
	author = {Mina Aganagic and Cumrun Vafa},
	archivePrefix = "arXiv",
	eprint = {hep-th/0012041},
	title = {Mirror Symmetry, {$D$}-Branes and Counting Holomorphic Discs},
	year = {2000}}

@unpublished{FLYZ-crepant,
	author = {Fang, Bohan and Liu, Chiu-Chu Melissa and Yu, Song and Zong, Zhengyu},
	note = {In preparation},
	title = {Topological recursion, crepant transformation conjecture, and holomorphic anomaly equations}}

@article{CoatesIritaniTseng-crepant,
	author = {Coates, Tom and Iritani, Hiroshi and Tseng, Hsian-Hua},
	fjournal = {Geometry \& Topology},
	issn = {1465-3060,1364-0380},
	journal = {Geom. Topol.},
	mrclass = {53D45 (14J33 14N35 53D37)},
	mrnumber = {2529944},
	mrreviewer = {Yunfeng\ Jiang},
	number = {5},
	pages = {2675--2744},
	title = {Wall-crossings in toric {G}romov-{W}itten theory. {I}. {C}repant examples},
	volume = {13},
	year = {2009}}

@article{CoatesRuan13-crepant,
	author = {Coates, Tom and Ruan, Yongbin},
	fjournal = {Universit\'e{} de Grenoble. Annales de l'Institut Fourier},
	issn = {0373-0956,1777-5310},
	journal = {Ann. Inst. Fourier (Grenoble)},
	mrclass = {53D45 (14N35 83E30)},
	mrnumber = {3112518},
	mrreviewer = {Hans-Bert\ Rademacher},
	number = {2},
	pages = {431--478},
	title = {Quantum cohomology and crepant resolutions: a conjecture},
	volume = {63},
	year = {2013}}

@incollection{BryanGraber-crepant,
	author = {Bryan, Jim and Graber, Tom},
	booktitle = {Algebraic geometry---{S}eattle 2005. {P}art 1},
	isbn = {978-0-8218-4702-2},
	mrclass = {14N35 (14E15)},
	mrnumber = {2483931},
	mrreviewer = {Hsian-Hua\ Tseng},
	pages = {23--42},
	publisher = {Amer. Math. Soc., Providence, RI},
	series = {Proc. Sympos. Pure Math.},
	title = {The crepant resolution conjecture},
	volume = {80, Part 1},
	year = {2009}}

@article{Yu25,
author = {Song Yu},
title = {{The Open Crepant Transformation Conjecture for toric Calabi-Yau 3-orbifolds}},
volume = {130},
journal = {Journal of Differential Geometry},
number = {1},
publisher = {Lehigh University},
pages = {27 -- 70},
year = {2025},
doi = {10.4310/jdg/1747062764},
URL = {https://doi.org/10.4310/jdg/1747062764}
}

@article{Zhou09b,
	author = {Jian Zhou},
	archivePrefix = "arXiv",
	eprint = {0911.2343},
	title = {Local Mirror Symmetry for the Topological Vertex},
	url = {https://arxiv.org/pdf/0911.2343.pdf},
	year = {2009}}

@article{Zhou09a,
	author = {Jian Zhou},
	archivePrefix = "arXiv",
	eprint = {0910.4320},
	title = {Local Mirror Symmetry for One-Legged Topological Vertex},
	url = {https://arxiv.org/pdf/0910.4320.pdf},
	year = {2009}}

@article{Yu24,
	author = {Yu, Song},
	doi = {10.1007/s00220-024-05077-5},
	fjournal = {Communications in Mathematical Physics},
	issn = {0010-3616},
	journal = {Comm. Math. Phys.},
	mrclass = {14N35},
	mrnumber = {4790517},
	number = {9},
	pages = {Paper No. 219, 34},
	title = {Open/closed {BPS} correspondence and integrality},
	url = {https://doi.org/10.1007/s00220-024-05077-5},
	volume = {405},
	year = {2024}}

@article{FLYZ25,
	author = {Fang, Bohan and Liu, Chiu-Chu Melissa and Yu, Song and Zong, Zhengyu},
	archivePrefix = "arXiv",
	eprint = {2504.15696},
	title = {Remodeling Conjecture with descendants},
	year = {2025}}

\end{document}